\newlist{thmlist}{enumerate}{1}
\setlist[thmlist]{label=(\roman{thmlisti})}
\definecolor{blue}{rgb}{0.38, 0.51, 0.71}
\definecolor{darkblue}{RGB}{17, 42, 60}
\definecolor{pastel blue}{RGB}{94, 149, 174}
\definecolor{pastel purple}{RGB}{102, 41, 163}
\definecolor{red}{RGB}{175, 49, 39}
\definecolor{pastel red}{RGB}{251, 142, 126}
\definecolor{orange}{RGB}{217, 156, 55}
\definecolor{pastel orange}{RGB}{248, 202, 157}
\definecolor{green}{RGB}{146, 227, 95}
\definecolor{pale green}{RGB}{197, 184, 104}
\definecolor{pastel green}{RGB}{197, 215, 192}
\definecolor{yellow}{RGB}{250, 199, 100}
\definecolor{pastel yellow}{RGB}{242, 207, 89}
\definecolor{brokenwhite}{RGB}{218, 192, 166}
\definecolor{brokengrey}{rgb}{0.77, 0.76, 0.82}
\colorlet{qyd-color}{blue}
\definecolor{ayd-color}{rgb}{0.6,0.1,0.2}
\newcommand{%
  \tikzsetnextfilename{}%
  \input{sections/tikzfigures/.tikz}%
}[1]{%
  \tikzsetnextfilename{#1}%
  \input{sections/tikzfigures/#1.tikz}%
}
\numberwithin{equation}{section}
\newcommand{\scalemath}[2]{\scalebox{#1}{
    \mbox{\ensuremath{\displaystyle #2}}}%
}
\newcommand{\ie}{i.e.\xspace}
\newcommand{\Benabou}{Bénabou\xspace}
\newcommand{\Bruguieres}{Bruguières\xspace}
\newcommand{\YetterDrinfeld}{Yetter--Drinfeld\xspace}
\newcommand{\YangBaxter}{Yang--Baxter\xspace}
\newcommand{\EilenbergMoore}{Eilenberg--Moore\xspace}
\newcommand{\Malcev}{Mal'cev\xspace}
\newcommand*{\from}{\colon}
\newcommand*{\trightarrow}[2][]{\ensuremath{\xrightarrow[#1]{#2}}} 
\renewcommand*{\to}{\longrightarrow}
\renewcommand*{\mapsto}{\longmapsto}
\newcommand{\id}{\mathrm{id}} 
\DeclareMathOperator{\ev}{ev} 
\DeclareMathOperator{\coev}{coev} 
\newcommand{\dualVee}{\mathchoice
  {\raisebox{0.5 em}{\ensuremath{\scriptscriptstyle{\vee}}}}
  {\raisebox{0.5 em}{\ensuremath{\scriptscriptstyle{\vee}}}}
  {\raisebox{0.3 em}{\scaleobj{0.8}{\ensuremath{\scriptscriptstyle{\vee}}}}}
  {\raisebox{0.3 em}{\scaleobj{0.6}{\ensuremath{\scriptscriptstyle{\vee}}}}}
}
\newcommand{\lDual}[1]{{#1}\dualVee}
\newcommand{\lbiDual}[1]{{#1}\dualVee\dualVee}
\newcommand{\rDual}[1]{\dualVee{#1}}
\newcommand*{\defeq}{\mathrel{\vcenter{\baselineskip0.5ex \lineskiplimit0pt
      \hbox{\scriptsize.}\hbox{\scriptsize.}}}%
  =}
\newcommand{\eqdef}{\eqqcolon}
\newcommand{\lact}{\mathchoice
  {\raisebox{0.065 em}{\,\scaleobj{0.75}{\rhd}}\,}
  {\raisebox{0.065 em}{\,\scaleobj{0.75}{\rhd}}\,}
  {\raisebox{0.04 em}{\scaleobj{0.5}{\rhd}}}
  {\raisebox{0.043 em}{\scaleobj{0.33}{\rhd}}}
}
\newcommand{\ract}{\mathchoice
  {\raisebox{0.065 em}{\,\scaleobj{0.75}{\lhd}}\,}
  {\raisebox{0.065 em}{\,\scaleobj{0.75}{\lhd}}\,}
  {\raisebox{0.04 em}{\scaleobj{0.5}{\lhd}}}
  {\raisebox{0.043 em}{\scaleobj{0.33}{\lhd}}}
}
\newcommand{\bract}{\mathchoice
  {\raisebox{0.065 em}{\scaleobj{0.75}{\blacktriangleleft}}}
  {\raisebox{0.065 em}{\scaleobj{0.75}{\blacktriangleleft}}}
  {\raisebox{0.04 em}{\scaleobj{0.5}{\blacktriangleleft}}}
  {\raisebox{0.043 em}{\scaleobj{0.33}{\blacktriangleleft}}}
}
\DeclareMathOperator{\Homspace}{Hom} 
\DeclareMathOperator{\Endspace}{End} 
\DeclareMathOperator{\Autspace}{Aut} 
\DeclareMathOperator{\Isospace}{Iso} 
\DeclareMathOperator{\Inv}{Inv} 
\DeclareMathOperator{\Pic}{Pic} 
\DeclareMathOperator{\Piv}{Piv} 
\DeclareMathOperator{\QPiv}{QPiv} 
\DeclareMathOperator{\Sym}{Sym}
\DeclareMathOperator{\Gr}{Gr} 
\DeclareMathOperator{\Char}{Char} 
\DeclareMathOperator{\op}{op} 
\DeclareMathOperator{\U}{U} 
\newcommand{\cat}[1]{\ensuremath{\mathcal{#1}}} 
\newcommand{\Hom}[3]{\ensuremath{\Homspace_{#1}({#2},{#3})}} 
\newcommand{\End}[2]{\ensuremath{\Endspace_{#1}({#2})}} 
\newcommand{\Aut}[2]{\ensuremath{\Autspace_{#1}({#2})}} 
\DeclareMathOperator{\Nat}{Nat} 
\newcommand{\Id}{\mathrm{Id}} 
\DeclareMathOperator{\Ad}{Ad}
\newcommand{\adjoint}{\dashv}
\newcommand{\cocolon}{\nobreak \mskip6mu plus1mu \mathpunct{} \nonscript \mkern-\thinmuskip {:} \mskip2mu \relax} 
\newcommand{\adj}[4]{\ensuremath{#1 \colon #3 \rightleftarrows #4 \cocolon #2}\xspace} 
\newcommand{\stdadj}{\adj{F}{U}{\cat C}{\cat D}} 
\DeclareMathOperator{\tensorop}{\ensuremath{\otimes}-op} 
\newcommand{\rMod}[1]{\ensuremath{\cat{M}_{#1}}} 
\NewDocumentCommand{\abstractCentFactory}{m o o m o}{
  \IfBooleanTF{#1}
  {\ensuremath{\mathsf{#2}({}_{#3}{#4}_{#5})}}
  {\ensuremath{\mathsf{#2}({}_{#3}{\cat{#4}}_{#5})}}
}
\NewDocumentCommand{\ZCat}{s O{} m O{}}{\abstractCentFactory{#1}[Z][#2]{#3}[#4]} 
\NewDocumentCommand{\ACat}{s O{} m O{}}{\abstractCentFactory{#1}[A][#2]{#3}[#4]} 
\NewDocumentCommand{\QCat}{s O{} m O{}}{\abstractCentFactory{#1}[Q][#2]{#3}[#4]} 
\NewDocumentCommand{\SZCat}{s O{} m O{}}{\abstractCentFactory{#1}[SZ][#2]{#3}[#4]} 
\DeclareMathOperator{\Com}{Com} 
\DeclareMathSymbol{\mathhyphen}{\mathord}{operators}{`\-}
\newcommand{\blank}{{-}} 
\tikzset{%
  baseline=(current bounding box.center)
}
\tikzset{%
  label distance=-0.15cm
}
\tikzset{
  font=\scriptsize
}
\tikzset{
  >=stealth
}
\tikzset{
  dot diameter/.store in=\dot@diameter,
  dot diameter=2pt,
  dot spacing/.store in=\dot@spacing,
  dot spacing=10pt,
  dots/.style={
    line width=\dot@diameter,
    line cap=round,
    dash pattern=on 0pt off \dot@spacing
  }
}
\tikzstyle{none} = [inner sep=3pt,outer sep=0pt]
\tikzstyle{morphism} = [rectangle, fill=white, draw=black, line width=1pt, font=\scriptsize]
\tikzstyle{intersection-pt} = [fill=white, inner sep = 2.5pt]
\tikzstyle{fullblackdot}=[fill=black, draw, shape=circle, scale=0.7]
\tikzstyle{blackdot}=[fill=white, draw, shape=circle, scale=0.3]
\tikzstyle{PP}=[draw={rgb,255:red,102;green,41;blue,163}, shape=circle, fill={rgb,255:red,102;green,41;blue,163}, scale=0.5]
\tikzstyle{comodule-edge}=[-, draw=qyd-color,very thick]
\tikzstyle{ddd}=[-, draw=black, dash dot dot, very thick]
\tikzstyle{unit}=[-, draw=black, very thick, densely dotted]
\tikzstyle{morphism-edge} = [very thick, black, decoration = {markings, mark=at position 0.5 with {\arrow{>}}}]
\tikzstyle{directed} = [morphism-edge, postaction=decorate]
\tikzstyle{inclusion} = [very thick, right hook->,midway,right]
\tikzstyle{Front}=[-, draw=black, fill=white, opacity=0.5]
\tikzstyle{Hidden}=[-, draw=black, fill=white, opacity=0.5]
\tikzstyle{AQEval} = [
        \pgfmathsetmacro\myval{multiply(
          divide(
          \pgfkeysvalueof{/pgf/decoration/mark info/distance from start}, \pgfdecoratedpathlength
          ),
          100
          )}
\tikzstyle{QAEval} = [
        \pgfmathsetmacro\myval{multiply(
          divide(
          \pgfkeysvalueof{/pgf/decoration/mark info/distance from start}, \pgfdecoratedpathlength
          ),
          100
          )}
\tikzstyle{AQCoeval} = [
        \pgfmathsetmacro\myval{multiply(
          divide(
          \pgfkeysvalueof{/pgf/decoration/mark info/distance from start}, \pgfdecoratedpathlength
          ),
          100
          )}
\tikzstyle{QACoeval} = [
        \pgfmathsetmacro\myval{multiply(
          divide(
          \pgfkeysvalueof{/pgf/decoration/mark info/distance from start}, \pgfdecoratedpathlength
          ),
          100
          )}
\author{Sebastian Halbig and Tony Zorman}
\address{Technische Universität Dresden, Institut f\"ur
  Geometrie, Zellescher Weg 12--14, 01062 Dresden}
\date{\today}
\keywords{Pivotal categories, module categories, centres, heaps, Hopf monads, comodule monads, anti-Drinfeld double}
\title{Pivotality, twisted centres and the anti-double of a Hopf monad}
\thanks{We would like to thank Ilya Shaprio for many fruitful discussions in the early stages of this project and Kenichi Shimizu for helpful remarks concerning symmetric centres and connections to topological quantum field theories. We further thank the anonymous referee for many useful remarks, suggestions, and improvements.}
\begin{document}
\maketitle

\begin{abstract}
  Finite-dimensional Hopf algebras admit a correspondence between so-called pairs in involution, one-dimensional anti-\YetterDrinfeld modules and algebra isomorphisms between the Drinfeld and anti-Drinfeld double.
  We extend it to general rigid monoidal categories and provide a monadic interpretation under the assumption that certain coends exist.
  Hereto we construct and study the anti-Drinfeld double of a Hopf monad.
  As an application the connection with the pivotality of Drinfeld centres and their underlying categories is discussed.
\end{abstract}

\microtypesetup{protrusion=false}
\tableofcontents
\microtypesetup{protrusion=true}

\section{Introduction}\label{sec: Intro}
The aim of this paper is to study the relationship between the Drinfeld centre of a monoidal category and a `twisted' version of it, which arises in the study of Hopf cyclic cohomology.
Our approach splits into two parts.
First, we deploy general categorical tools in order to identify equivalences of the aforementioned categories with `invertible' objects in a twisted centre.
Second, we take the monadic point of view and explain which of these equivalences translate into isomorphisms of monads generalising the Drinfeld and anti-Drinfeld double.
As a byproduct we show that a rigid category which admits these monads is pivotal if and only if the generalised double and anti-double are isomorphic.

\subsection*{The Hopf algebraic case}

Our goal is best explained by first recalling the interactions between the various objects and categories in the setting of finite-dimensional Hopf algebras.
This is covered in greater detail in \cite{Halbig2019}.

A peculiarity of the Hopf cyclic cohomology, as defined by Connes and Moscovici \cite{Connes1999}, is the lack of `canonical' coefficients.
Originally, see \cite{Connes2000}, modular pairs in involution were considered.
These consist of a group-like and a character implementing the square of the antipode by their respective adjoint actions.
Later, Hajac et.al.~obtained a quite general source for coefficients in what they called the category of anti-\YetterDrinfeld modules, \cite{Hajac2004a}.
Their name is due to the similarity with \YetterDrinfeld modules:
Like their well-known `cousins', they are simultaneously modules and comodules satisfying a compatibility condition between the action and coaction.
In general, they do not form a monoidal category but a module category over the \YetterDrinfeld modules.
This is reflected by the fact that they can be identified with the modules over the anti-Drinfeld double, a comodule algebra over the Drinfeld double.
The special role of pairs in involution is captured by the following theorem due to Hajac and Sommerhäuser:

\begin{theorem}[{\cite[Theorem~3.4]{Halbig2019}}]\label{thm: equiv_in_Hopf_setting}
  For any finite-dimensional Hopf algebra \(H\) the following statements are equivalent:
  \begin{thmlist}
    \item The Hopf algebra \(H\) admits a pair in involution.
    \item There exists a one-dimensional anti-\YetterDrinfeld module over \(H\).
    \item The Drinfeld double and anti-Drinfeld double of \(H\) are isomorphic algebras.
  \end{thmlist}
\end{theorem}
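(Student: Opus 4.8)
The plan is to establish the three equivalences by splitting them into two halves: the passage between a pair in involution and a one-dimensional anti-\YetterDrinfeld module is a direct reformulation, while the passage to an isomorphism of doubles is handled through the structure of the anti-Drinfeld double $Q(H)$ as a comodule algebra over the Drinfeld double $D(H)$. Concretely I would prove that the existence of a pair in involution is equivalent to the existence of a one-dimensional anti-\YetterDrinfeld module, and then that the latter is equivalent to the algebra isomorphism $D(H)\cong Q(H)$.

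First I would unpack the equivalence between a pair in involution and a one-dimensional anti-\YetterDrinfeld module. Any one-dimensional $H$-comodule is given by a group-like element $g\in H$, and any one-dimensional $H$-module by a character $\alpha\colon H\to k$; together they equip the ground field $k$ with both structures. Writing out the anti-\YetterDrinfeld compatibility condition
\[
  \rho(h\cdot m) = h_{(1)}\,m_{(-1)}\,S^{-1}(h_{(3)}) \otimes h_{(2)}\cdot m_{(0)}
\]
for $m\in k$ with the action of $\alpha$ and the coaction of $g$ inserted, the relation collapses to an identity of the shape $S^2(h)=g\,\bigl(\alpha(h_{(1)})\,h_{(2)}\,\alpha^{-1}(h_{(3)})\bigr)\,g^{-1}$, which is exactly the defining equation of a pair in involution. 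This produces a bijection between the two kinds of data; the step is mechanical once the conventions for the antipode and the adjoint (co)action are fixed.

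For the remaining equivalence I would use that modules over $Q(H)$ are precisely anti-\YetterDrinfeld modules, that modules over $D(H)$ are \YetterDrinfeld modules, and that $Q(H)$ is a $D(H)$-comodule algebra. The direction from an isomorphism of doubles to a one-dimensional module is easy: the counit $\varepsilon\colon D(H)\to k$ is an algebra map, so pulling it back along an algebra isomorphism $\phi\colon D(H)\xrightarrow{\ \sim\ }Q(H)$ yields a character $\varepsilon\circ\phi^{-1}\colon Q(H)\to k$, i.e.\ a one-dimensional anti-\YetterDrinfeld module. Conversely, from a character $\chi\colon Q(H)\to k$ I would build a candidate isomorphism as $(\mathrm{id}_{D}\otimes\chi)\circ\delta\colon Q(H)\to D(H)$, where $\delta\colon Q(H)\to D(H)\otimes Q(H)$ is the coaction. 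Being a composite of the algebra maps $\delta$ and $\mathrm{id}\otimes\chi$, this map is automatically multiplicative, so the substance lies in its bijectivity, which exhibits $\chi$ as a cleaving that trivialises $Q(H)$ as a $D(H)$-comodule algebra.

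I expect the main obstacle to be exactly this bijectivity: an algebra isomorphism is strictly more rigid than a Morita-type equivalence of the associated module categories, and establishing it requires the character to be convolution-invertible and the coassociativity and counit axioms of $\delta$ to cooperate. The technical heart is that the anti-Drinfeld double is a \emph{cleft} $D(H)$-comodule algebra whose cleaving is supplied precisely by a character, so the argument ultimately rests on cleft-extension machinery specialised to $Q(H)$; fixing the explicit comultiplication formulas for $D(H)$ and $Q(H)$ and tracking the antipode twists through them is where the computation concentrates.
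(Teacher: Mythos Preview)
The paper does not contain a proof of this theorem: it is quoted from \cite[Theorem~3.4]{Halbig2019} in the introduction as the Hopf-algebraic result that the paper sets out to generalise, and the paper's own arguments concern the categorical and monadic analogues (Theorems~\ref{thm: equivalence_in_the_rigid_setting} and~\ref{thm: equivalence_in_the_monad_setting}).

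Your outline is nonetheless correct, and it is instructive to set it against the paper's strategy for those generalisations. The equivalence $(i)\Leftrightarrow(ii)$ is, as you say, a bookkeeping exercise; Lemma~\ref{lem: quasi_piv_vs_inv_ayds} and Theorem~\ref{thm: piis_are_qpiv} are the categorical and monadic incarnations of exactly the same unwinding. For $(ii)\Rightarrow(iii)$ your map $(\mathrm{id}\otimes\chi)\circ\delta$ is precisely the algebra morphism that Theorem~\ref{thm: functors_as_morphisms_of_monads} extracts from the strict module functor $\blank\otimes\omega$ (with $\omega$ the one-dimensional module), so the two constructions coincide. Where you diverge is in the bijectivity argument. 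You reach for cleft-extension machinery, which can be made to work but is heavier than needed. The paper's route, specialised to this case, is to observe that $\blank\otimes\omega$ is an \emph{equivalence} of module categories because $\omega$ is invertible---automatic here, since its underlying object is the monoidal unit---and then the inverse functor $\blank\otimes\lDual\omega$ furnishes the inverse monad morphism directly via the same correspondence. Translated back into Hopf-algebraic terms: the dual one-dimensional module supplies an explicit two-sided inverse to $(\mathrm{id}\otimes\chi)\circ\delta$, so no Galois-type argument is required.
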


Furthermore, these pairs are of categorical interest as they give rise to pivotal structures on the \YetterDrinfeld modules.
That is, they provide a natural monoidal isomorphism between each object and its double dual.

\subsection*{Twisted centres and pivotality}

We want to reformulate this theorem in a categorical framework with an emphasis on pivotal structures.

First, let us discuss appropriate replacements for the concepts described above.
The role of the Hopf algebra is taken by a rigid monoidal category \(\cat C\).
Roughly speaking, that means a category with a suitably associative and unital product in which every object has a left and right dual.
Due to the monoid-like nature of \(\cat C\), we can study its bimodule categories.
Of special interest is the regular bimodule, whose actions are given by respectively `multiplying' from the left or right.
Its centre  \(\ZCat C\), called the Drinfeld centre of \(\cat C\), provides us with an analogue of the category of \YetterDrinfeld modules, see \cite[Chapter XIII]{Kassel1998}.
Anti-\YetterDrinfeld modules were generalised in
\cite{Hassanzadeh2019} to what one might call the anti-Drinfeld centre
\(\ACat C\) of \(\cat C\).
In the context of topological quantum field theories this type of
category was studied for example in \cite{Fuchs2017} and \cite{Douglas2021}.
As in the Hopf algebraic case, it is a module category over \(\ZCat C\).
An adaptation of pairs in involution are, what we will call, quasi-pivotal structures, studied for example in \cite{Shimizu2016}.
They consist of an invertible object, which replaces the character, and, instead of a group-like element, a certain natural monoidal isomorphism.

The main observation needed to generalise Theorem~\ref{thm: equiv_in_Hopf_setting} is that the anti-Drinfeld centre admits a `dual'.
In Proposition~\ref{prop: module_funs_have_adjoints} this allows us to identify equivalences of \(\ZCat C\) modules between \(\ZCat C\) and \(\ACat C\) with so-called $\cat{C}$-invertible objects in \(\ACat C\).
These are objects in $\ACat C$ whose image under the forgetful functor to $\cat{C}$ is invertible.
Subsequently, we prove that these objects correspond to quasi-pivotal structures on \(\cat C\) and obtain the categorical version of Theorem~\ref{thm: equiv_in_Hopf_setting} as Theorem~\ref{thm: equivalence_in_the_rigid_setting}, which can be summarised as follows.

\begin{theorem}\label{thm: equivalence_in_the_rigid_setting_intro}
  Let \(\cat C\) be a rigid monoidal category. The following are equivalent:
  \begin{thmlist}
    \item The category \(\cat C\) is quasi-pivotal.
    \item There exists a $\cat{C}$-invertible object in \(\ACat C\).
    \item The Drinfeld and anti-Drinfeld centre of \(\cat C\) are equivalent module categories.
  \end{thmlist}
\end{theorem}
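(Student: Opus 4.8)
The proof splits into two biconditionals, (ii)~$\Leftrightarrow$~(iii) and (i)~$\Leftrightarrow$~(ii), the first of which is where the module-theoretic machinery enters and the second of which carries the new content on quasi-pivotality. The overall strategy mirrors the Hopf-algebraic Theorem~\ref{thm: equiv_in_Hopf_setting}: the r\^ole of one-dimensional anti-\YetterDrinfeld modules is played by invertible objects of $\ACat C$, that of the algebra isomorphism between the two doubles by a module equivalence $\ZCat C \simeq \ACat C$, and that of a pair in involution by a quasi-pivotal structure on $\cat C$.

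For (ii)~$\Leftrightarrow$~(iii) I would use that $\ZCat C$ is the free rank-one module over itself, generated by its unit object (the trivial half-braiding on the monoidal unit). Consequently every $\ZCat C$-module functor $\ZCat C \to \ACat C$ is, up to isomorphism, the action functor $(-)\mathbin{\triangleright} M$ for a uniquely determined $M \in \ACat C$, and conversely each object of $\ACat C$ determines such a functor. By Theorem~\ref{thm: module_funs_have_adjoints} the functor $(-)\mathbin{\triangleright} M$ has adjoints assembled from the `dual' of $\ACat C$, and it is an equivalence of module categories exactly when $M$ is invertible. Reading this in both directions yields the claim: an invertible $M$ produces a module equivalence, while any module equivalence $\ZCat C \xrightarrow{\sim} \ACat C$ sends the unit to an invertible object of $\ACat C$.

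For (i)~$\Leftrightarrow$~(ii) I would unpack an object of $\ACat C$ as an object of $\cat C$ equipped with a half-braiding twisted by the bidual functor, schematically $\sigma_X \colon M \otimes X \xrightarrow{\sim} X^{\vee\vee} \otimes M$, subject to the usual hexagon-type coherence. Starting from a quasi-pivotal structure---an invertible $D \in \cat C$ together with a monoidal natural isomorphism identifying $(-)^{\vee\vee}$ with conjugation by $D$---I would build such a $\sigma$ on $D$ from this isomorphism and the duality data of $\cat C$, verify the coherence, and note that $\otimes$-invertibility of $D$ in $\cat C$ promotes it to invertibility in $\ACat C$. Conversely, given an invertible $M \in \ACat C$ I would first check that its underlying object is $\otimes$-invertible in $\cat C$, then use $M^{-1}$ to rewrite $\sigma_X$ as a natural family $X^{\vee\vee} \cong M \otimes X \otimes M^{-1}$. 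The main obstacle is this final step: showing that the coherence axiom for the twisted half-braiding translates, after conjugating by the invertible object, into exactly the monoidal-naturality of this family, so that it constitutes a quasi-pivotal structure. Here one must carefully bookkeep the bidual twists and the duality coherence isomorphisms of $\cat C$ to confirm that the two coherence conditions match on the nose, with no residual obstruction. Combining the two biconditionals then establishes all three equivalences.
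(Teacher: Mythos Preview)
Your proposal is correct and follows essentially the same route as the paper: the equivalence (ii)~$\Leftrightarrow$~(iii) is exactly Theorem~\ref{thm: module_funs_have_adjoints}, and (i)~$\Leftrightarrow$~(ii) is the content of Lemma~\ref{lem: quasi_piv_vs_inv_ayds}, which carries out precisely the translation between twisted half-braidings on an invertible object and quasi-pivotal structures that you sketch. The coherence bookkeeping you flag as the main obstacle is handled there by explicit string-diagram formulas, and note that in the paper's conventions the half-braiding for $\ACat C$ reads $\sigma_{M,X}\colon M\otimes \lbiDual{X}\to X\otimes M$ rather than $M\otimes X\to \lbiDual{X}\otimes M$, though this is only a cosmetic discrepancy.
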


The pivotal structures of the Drinfeld centre \(\ZCat C\) of a finite tensor category \(\cat C\) were studied by Shimizu in \cite{Shimizu2016}.
We contribute to these results with the following observations:
the set \(\Pic \ACat C\) of isomorphism classes of $\cat{C}$-invertible objects in \(\ACat C\) forms a heap, see Lemma~\ref{lem: inv_twisted_form_heap}.
That is, it behaves like a group but without a fixed neutral element.
Note that this provides a parallel with the aforementioned fact that Hopf cyclic cohomology has no canonical coefficients.
Equipping the set of pivotal structures \(\Piv \ZCat C\) of \(\ZCat C\) with the same algebraic structure, we  construct a heap morphism
\(\kappa \from \Pic \ACat C \to \Piv \ZCat C\).
In general, we cannot expect \(\kappa\) to be injective.
Instead, we show that invertible objects in the centre \(\ZCat C\) which admit a `trivial' braiding act nicely on \(\Pic \ACat C\) and the induced pivotal structures only depend on the orbit of this action.
The orbits themselves form a quotient heap and the induced morphism
\begin{equation*}
  \iota \from \Pic \ACat C/_{\sim} \to \Piv \ZCat C,
\end{equation*}
is injective, see Theorem~\ref{thm: equiv_establishes_piv}.
In many cases, such as \(\cat C\) being a finite tensor category, it is moreover surjective.
However, by constructing a counterexample, we show in Theorem~\ref{thm: non_induced_pivotal_structure} that this is not true in general.

\subsection*{Reconstruction: Comodule monads}

To reconcile our results with the initial Hopf algebraic formulation, we provide a monadic interpretation under the assumption that certain coends exist.

The starting point for our considerations is a Hopf monad \(H \from \cat V \to \cat V\) on a rigid, possibly pivotal, category \(\cat V\) which we think of as a replacement of finite-dimensional vector spaces.
Its modules form a rigid monoidal category \(\cat V^H\).
Utilising the centralisers of Day and Street, \cite{Day2007}, \Bruguieres and Virelizier described in \cite{Bruguieres2012} the Drinfeld double \(D(H)\) of \(H\).
It is obtained through a two-step process.
First, the central Hopf monad on \(\cat V^H\) is defined.
Then, the double \(D(H)\from \cat V \to \cat V\) arises by applying a variant of Beck's theorem of distributive laws to it.
As in the classical setting, the modules of \(D(H)\) are isomorphic as a braided rigid monoidal category to the Drinfeld centre \(\ZCat*{\cat V^H}\).
By adapting the procedure outlined above for our purposes, we construct the anti-central monad and derive the anti-Drinfeld double \(Q(H) \from \cat V \to \cat V\) of \(H\) from it.
It is a comodule monad over \(D(H)\) in the sense of \cite{Aguiar2012} which implements the `dual' of the anti-centre \(\QCat{\cat V^H}\) as a module category.
Having all ingredients assembled, we show in Proposition~\ref{prop: functors_as_morphisms_of_monads} that certain module equivalences between \(\ZCat*{\cat V^H}\) and \(\QCat{\cat V^H}\) materialise as isomorphisms between their associated monads.
Applying our general categorical results to \(\cat V^H\) and combining it with a monadic version of pairs in involution, we obtain in Theorem~\ref{thm: equivalence_in_the_monad_setting} an almost verbatim translation of Theorem~\ref{thm: equiv_in_Hopf_setting}:

\begin{theorem}\label{thm: equivalence_in_the_monad_setting_intro}
  Let \(H\) be a Hopf monad on a pivotal category \(\cat V\) that admits a Drinfeld and anti-Drinfeld double. The following are equivalent:
  \begin{thmlist}
    \item The Hopf monad \(H\) admits a pair in involution.
    \item There exists a module over \(Q(H)\) whose underlying object is \(1 \in \cat V\).
    \item The Drinfeld and anti-Drinfeld double of \(H\) are isomorphic as monads.
  \end{thmlist}
\end{theorem}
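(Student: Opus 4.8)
The plan is to derive the three equivalences from the purely categorical Theorem~\ref{thm: equivalence_in_the_rigid_setting}, applied to the rigid monoidal category $\cat C = \cat V^H$, by translating each of its conditions into the corresponding monadic statement through the reconstruction dictionary assembled above. Concretely, I would show that the monadic conditions (i)--(iii) are equivalent, respectively, to the quasi-pivotality of $\cat V^H$, to the existence of an `invertible' object in $\ACat{\cat V^H}$, and to the equivalence of $\ZCat*{\cat V^H}$ and $\ACat{\cat V^H}$ as $\ZCat*{\cat V^H}$-module categories; the cycle of implications then closes by Theorem~\ref{thm: equivalence_in_the_rigid_setting}.

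For the translation of (i), I would unwind the definition of a pair in involution for $H$ and check that its two ingredients---the natural monoidal isomorphism implementing the square of the antipode of $H$ and the accompanying invertible object---descend along the free--forgetful adjunction between $\cat V$ and $\cat V^H$ to exactly the data of a quasi-pivotal structure on $\cat V^H$, and conversely that every quasi-pivotal structure on $\cat V^H$ lifts to such monadic data. This is essentially a bookkeeping exercise once the monadic notion is in place, so I expect no serious difficulty here.

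For (ii), I would use that $Q(H)$ is built precisely to implement the dual $\QCat{\cat V^H}$ of the anti-centre as a module category, so that a $Q(H)$-module is the same datum as an object of $\QCat{\cat V^H}$. Under the duality between $\QCat{\cat V^H}$ and $\ACat{\cat V^H}$, I would identify those $Q(H)$-modules whose underlying object is the monoidal unit $1 \in \cat V$ with the `invertible' objects of $\ACat{\cat V^H}$; this is the monadic shadow of the Hopf-algebraic fact that one-dimensional anti-\YetterDrinfeld modules are exactly the invertible objects, with $1$ playing the role of the one-dimensional module.

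Finally, for (iii), I would invoke Theorem~\ref{thm: functors_as_morphisms_of_monads}: the equivalence of centres furnished by the categorical theorem lifts to an isomorphism $D(H) \cong Q(H)$ of the associated (comodule) monads, and conversely any such monad isomorphism induces an equivalence of module categories. The main obstacle I anticipate lies here, in two intertwined points. First, Theorem~\ref{thm: functors_as_morphisms_of_monads} only promotes \emph{certain} module equivalences to monad isomorphisms, so I must verify that the equivalence arising from an invertible object of $\ACat{\cat V^H}$ belongs to this distinguished class. Second, I must reconcile the two incarnations of the anti-centre---its appearance as $\ACat{\cat V^H}$ in the categorical theorem and its realisation as the dual $\QCat{\cat V^H}$ by the category of $Q(H)$-modules---and check that the comodule-monad structure of $Q(H)$ over $D(H)$ is respected by the lifted isomorphism. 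Once this compatibility is established, chaining the three translations with Theorem~\ref{thm: equivalence_in_the_rigid_setting} completes the proof.
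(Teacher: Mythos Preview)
Your overall strategy of routing through Theorem~\ref{thm: equivalence_in_the_rigid_setting} has a genuine gap in the translation of (ii), and the obstacle you anticipate in (iii) is the same issue in disguise. The identification you claim for (ii) only goes one way: a $Q(H)$-module with underlying object $1 \in \cat V$ does yield, via the comparison isomorphism $\cat V^{Q(H)} \cong \QCat{\cat V^H}$, an object whose underlying $H$-module is a character and hence invertible in $\cat V^H$; but an arbitrary element of $\Pic \ACat{\cat V^H}$ only has invertible underlying object in $\cat V^H$, and $U_H$ need not send this to $1 \in \cat V$ when $\cat V$ has nontrivial Picard group. The same mismatch afflicts (iii): Theorem~\ref{thm: functors_as_morphisms_of_monads} characterises monad morphisms as \emph{strict} module functors satisfying $U_{Q(H)} F = U_{D(H)}$, whereas Theorem~\ref{thm: equivalence_in_the_rigid_setting} hands you an equivalence $\blank \otimes A$ for some invertible $A$, which commutes with the forgetful functors only when the underlying object of $A$ in $\cat V$ is $1$. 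So your translations connect the monadic statements to proper subclasses of the categorical ones, and invoking Theorem~\ref{thm: equivalence_in_the_rigid_setting} on the full classes does not close the cycle.

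The paper sidesteps this by proving the monadic equivalences directly, without passing through Theorem~\ref{thm: equivalence_in_the_rigid_setting}. The key observation is that the two restrictions match up: if $\omega \in \QCat{\cat V^H}$ has underlying object $1 \in \cat V$, then $\blank \otimes \omega$ is a strict module functor with $U_{Q(H)}(\blank \otimes \omega) = U_{D(H)}$, so Theorem~\ref{thm: functors_as_morphisms_of_monads} applies and yields a comodule monad isomorphism; conversely, any monad isomorphism $g$ induces a functor whose value on the unit, transported through $E^{(Q(H))}$, lands over $1 \in \cat V$ by construction. The link to pairs in involution then uses that the bijection of Lemma~\ref{lem: quasi_piv_vs_inv_ayds} preserves underlying objects, so lifts of $1$ correspond exactly to quasi-pivotal structures $(\beta, \rho_\beta)$ with $\beta \in \Char(H)$, which is precisely what Theorem~\ref{thm: piis_are_qpiv} matches with pairs in involution. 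Your approach could be salvaged by proving a restricted version of Theorem~\ref{thm: equivalence_in_the_rigid_setting} that tracks the underlying object in $\cat V$ throughout, but at that point you would essentially be re-deriving the paper's direct argument.
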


An immediate consequence of the above result is the observation that pivotal structures on \(\cat V^H\) equate to isomorphisms between the central and anti-central monads, see Corollary~\ref{cor: pivotal_from_central_anti_central}.

\subsection*{Outline}

The article is divided into two parts comprising Sections \ref{sec: shades_of_mon_cats}, \ref{sec: heaps} and \ref{sec: pivotality_of_the_df_centre} as well as Sections \ref{sec: monads} and \ref{sec: mon_perspective}.
We give an overview of some categorical tools for our study in Section~\ref{sec: shades_of_mon_cats}.
In Section~\ref{sec: heaps}, we recall the concept of heaps.
Section~\ref{sec: pivotality_of_the_df_centre} starts with a discussion about twisted centres and their Picard heaps, before studying the notion of quasi-pivotality and establishing the   categorical version of the correspondence given in Theorem~\ref{thm: equiv_in_Hopf_setting}.
Section~\ref{sec: monads} provides an overview of the theory of Hopf monads and comodule monads.
In Section~\ref{sec: mon_perspective} the central and anti-central
monad are constructed and from them the Drinfeld and anti-Drinfeld
double. By expressing our abstract categorical findings in the monadic
language we then obtain Theorem~\ref{thm: equivalence_in_the_monad_setting_intro} and comment on how it can be
used to detect pivotal structures.


\section{Monoidal categories, bimodule categories and the centre construction} \label{sec: shades_of_mon_cats}
We assume the readers familiarity with standard concepts of (monoidal) category theory, as given for example in \cite{MacLane1998,Leinster2014,Etingof2015,Riehl2017}.
As a convention, the set of morphisms between two objects \(X, Y \in \cat C\) of a category \(\cat C\) will be written as \(\cat C(X, Y)\).
We will denote the composition of two morphisms \(g \in \cat C(X, Y)\) and \(f \in \cat C(W, X)\) by the concatenation \(gf \defeq g \circ f \in \cat C(W, Y)\).
Adjunctions play an important role in our investigation.
A \emph{right adjoint} of a functor \(F \from \cat C \to \cat D\) is a functor \(U \from \cat D \to \cat C\) together with two natural transformations \(\eta \from {\Id_{\cat C}} \to UF\) and \(\epsilon \from FU \to \Id_{\cat D}\), called the \emph{unit} and \emph{counit} of the adjunction, satisfying the \emph{snake identities}.
These conditions determine \(U\from \cat D \to \cat C\) uniquely up to natural isomorphism.
We write \stdadj or \(F \adjoint U\).

To navigate the proverbial `sea of jargon', \cite{Baez2011}, we provide the reader with a table, inspired by \cite[Figure 2]{Henriques2016}, in order to help us outline the main topics we are about to encounter in this section.

\begin{center}
  \tikzsetnextfilename{flavours_of_monoidal_categories}%
  \input{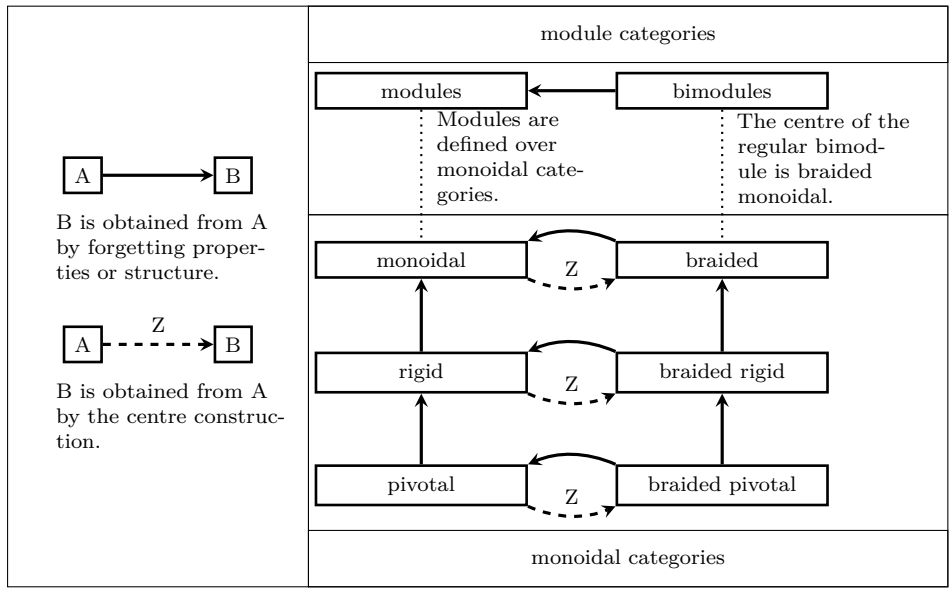}%

  \captionof{figure}{Various types of monoidal and module categories, as well as (some) relations between them.} \label{fig: vars_of_mon_cats}
\end{center}

In Subsection~\ref{subsec: from_mon_cats_to_piv_cats} we work our way down the first column, encountering monoidal, rigid and pivotal categories.
This is based on \citep[Chapter 2]{Etingof2015}.
The concept of braided monoidal categories, responsible for the second column, is discussed in Subsection~\ref{subsec: braided_and_ribbon_cats}.
See \citep[Chapter 8]{Etingof2015} for a reference.
Our approach to module categories, see Subsection~\ref{subsec: bimodule_cats_and_centre_constr}, is derived from \citep[Chapter 7]{Etingof2015}.
We pay special attention to the (Drinfeld) centre construction, responsible for the arrows labelled with a `Z', in Figure~\ref{fig: vars_of_mon_cats}.

\subsection{From monoidal to pivotal categories}\label{subsec: from_mon_cats_to_piv_cats}

Monoidal categories were introduced independently by Mac Lane, \cite{MacLane1963}, and \Benabou, \cite{Benabou1963}, under the name `categories with multiplication'.\footnote{
  Parts of the historical development of the study of monoidal categories is sketched in \cite{Street2012} and, to a lesser extend, in \cite{Baez2011}.
}
The prime examples we draw our inspiration from are finite-dimensional modules over Hopf algebras or, more generally, finite tensor categories, see \cite[Chapters 5 and 6]{Etingof2015}.

\subsubsection{Monoidal categories, functors, and natural transformations}\label{subsubsec: mon_cats}

\begin{definition}\label{def: mon_cat}
  A \emph{strict monoidal category} is a triple \((\cat C, \otimes, 1)\) comprising
  a category \(\cat C\),
  a bifunctor \(\otimes\from \cat C \times \cat C \to \cat C\), called the \emph{tensor product}, and
  an object \(1 \in \cat C\), the \emph{unit},
  satisfying associativity and unitality in the sense that
  \begin{equation}
    (\blank \otimes \blank) \otimes \blank = \blank \otimes ( \blank \otimes \blank) \qquad \text{and} \qquad 1 \otimes \blank = \Id_{\cat C} = \blank \otimes 1.
  \end{equation}
\end{definition}

Many natural examples of monoidal categories, such as the category of vector spaces, are not strict.
That is, the associativity and unitality of the tensor product only hold up to (suitably coherent) natural isomorphisms.
However, we can compensate this by \emph{Mac Lane's strictification theorem}.
It states that any monoidal category is, in a `structure preserving manner', equivalent to a strict one.
A proof is given for example in \cite[Theorem~2.8.5]{Etingof2015}.
Thus, we shall only consider strict monoidal categories from now on, and will omit the prefix `strict'.

The next definition slightly extends the scope of \cite{Etingof2015} but is standard in the literature, see for example \cite[Chapter~3]{Aguiar2010}.

\begin{definition}\label{def: mon_functor}
  An \emph{oplax monoidal functor} between monoidal categories \((\cat C, \otimes, 1)\) and \((\cat C', \otimes', 1')\) is a functor \(F\from \cat C \rightarrow \cat C'\) together with a natural transformation
  \begin{equation*}
    \Delta_{X, Y} \from F(X \otimes Y) \to F(X) \otimes' F(Y), \qquad \qquad \text{ for all } X, Y \in \cat C,
  \end{equation*}
  and a morphism \(\varepsilon \from F(1)\rightarrow 1'\) satisfying \emph{coassociativity} and \emph{counitality}.

  If the \emph{coherence morphisms}, \(\Delta\) and \(\varepsilon\), are isomorphisms or identities, we call \(F\) \emph{(strong) monoidal} or \emph{strict monoidal}, respectively.
\end{definition}

We think of an oplax monoidal functor \((F, \Delta, \varepsilon)\) as a generalisation of a coalgebra.
To emphasise this point of view, we refer to \(\Delta\) and \(\varepsilon\) as the \emph{comultiplication} and \emph{counit} of \(F\). The dual concept is that of a \emph{lax monoidal functor}, which resembles the notion of an algebra.

Assume \(F \from \cat C \to \cat D\) to be strong monoidal and an equivalence of categories.
Its quasi-inverse \(G\from \cat D \to \cat C\) can be turned into a monoidal functor such that the natural isomorphisms \(FG \rightarrow \Id_\cat D\) and \(GF \rightarrow \Id_\cat C\) are compatible with the monoidal structure in a sense we will explain in the next definition.
This justifies calling \(F\) a \emph{monoidal equivalence}.

\begin{definition}\label{def: mon_nat_trafo}
  An \emph{oplax monoidal natural transformation} between oplax monoidal functors \(F, G \from {\cat C} \to {\cat C'}\) is a natural transformation \(\rho \from F\to G\)
  such that for all \(X, Y \in \cat C\)
  \begin{equation}
    \Delta_{X, Y}^{(G)} \rho_{X\otimes Y} = (\rho_X \otimes' \rho_Y) \Delta_{X,Y}^{(F)} \qquad \text{and} \qquad \varepsilon^{(G)} \rho_1 = \varepsilon^{(F)}.
  \end{equation}
  If \(\rho\) is additionally a natural isomorphism, we call it an \emph{oplax monoidal natural isomorphism}.
\end{definition}

In case we want to emphasise that the underlying functors of an oplax monoidal natural transformation \(\rho \from F \to G\) are strong or strict monoidal, we replace the prefix `oplax' with  either `strong' or `strict'.

Adjunctions between monoidal categories are a broad topic with many facets, see \cite[Chapter~3]{Aguiar2010}.
For our purposes, we can restrict ourselves to the following situation.

\begin{definition}\label{def: monoidal_adjunction}
  We call an adjunction \(\stdadj\) between monoidal categories \(\cat C\) and \(\cat D\) \emph{oplax monoidal} if \(F\) and \(U\) are oplax monoidal functors and the unit and counit of the adjunction are oplax monoidal natural transformations.\footnote{%
    In this case, by e.g.~\cite[Lemma~7.10]{Turaev2017}, the functor \(U\) turns out to be strong monoidal.%
  }
  If \(F\) and \(U\) are moreover strong monoidal, we call \(\stdadj\) a \emph{(strong) monoidal adjunction}.
\end{definition}

An efficient means for computations in strict monoidal categories are \emph{string diagrams}.
They consist of strings labelled with objects and vertices between the strings labelled with morphisms.
If two string diagrams can be transformed into each other, the morphisms that they represent are equal.
A more detailed description is given in \cite{joyal91,Selinger2011}.
Our convention is to read diagrams from top to bottom and left to right.
Taking tensor products is depicted by gluing diagrams together horizontally; composition equates to gluing vertically.
Identity morphisms are given by unlabelled vertices. The unit object is represented by the empty edge.

\begin{center}
  \tikzsetnextfilename{explanation_string_diagrams}%
  \input{sections/tikzfigures/explanation_string_diagrams.tikz}%

\end{center}

\subsubsection{Rigidity and pivotality}\label{subsubsec: rigid_and_piv_cats}

Rigidity\footnote{%
    Rigid categories are also called \emph{autonomous},
    or, in the symmetric case, \emph{compact closed}.%
  }
in the context of monoidal categories refers to a concept of duality similar to that of finite-dimensional vector spaces.
Importantly, notions like dual bases and evaluations have their analogues in this setting.
If, moreover, there exists an identification between objects and their double duals that is compatible with the tensor product, the category is called pivotal\footnote{%
  Pivotal categories are also known as \emph{balanced} or \emph{sovereign} categories.%
}.
The more refined notion of spherical categories is not discussed here.
For a treatment in the context of Hopf algebras we refer to the articles \cite{Barrett1999} and \cite{Andruskiewitsch2014}. Examples of duality  inspired by topology are discussed in \cite{Dold1980}.

\begin{definition}\label{def: rigid_cat}
  A \emph{left dual} of an object \(X\in \cat C\) in a monoidal category \(\cat{C}\) is a triple
  \((\lDual{X}, \ev_X^l, \coev_X^l)\) comprising an object \(\lDual{X}\) and two morphisms
  \begin{equation}\label{eq: def_eq_of_left_(co)ev}
    \ev_X^l \from \lDual X \otimes X \to 1 \qquad \text{ and } \qquad
    \coev_X^l \from 1 \to X \otimes \lDual X,
  \end{equation}
  called the left \emph{evaluation} and \emph{coevaluation} of \(X\), such that the \emph{snake identities}
  \begin{subequations}\label{eq: snake_identities}
    \begin{align}
      \id_X &
              = (\id_X \otimes \ev_X^l)
              (\coev_X^l\otimes \id_X)
              \qquad \qquad \text{ and } \label{eq: left_snake_identity}
      \\
      \id_{\lDual X} &
                       = (\ev_X^l\otimes \id_{\lDual X})
                       (\id_{\lDual X}\otimes \coev_X^l) \label{eq: right_snake_identity}
    \end{align}
  \end{subequations}
  hold.
  A \emph{right dual} of \(X\) is a triple
  \((\rDual X, \ev_X^r, \coev_X^r)\) consisting of an object \(\rDual X\) and a right evaluation and coevaluation,
  \(\ev_X^r \from X \otimes \rDual X \to 1\) and \(\coev_X^r \from 1 \to \rDual X \otimes X\),
  subject to analogous identities.

  We call \(\cat C\) a \emph{rigid category} if every object has a left dual and a right dual.
\end{definition}

Left and right duals are unique up to unique isomorphism.
We fix a choice of duals for every object in a rigid category \(\cat C\) and speak of \emph{the} left or right dual in the following.
Graphically, we represent evaluations and coevaluations by semicircles, possibly decorated with arrows if we want to emphasise whether we consider their left or right version.
\begin{center}
  \tikzsetnextfilename{coevs_and_evs}%
  \input{sections/tikzfigures/coevs_and_evs.tikz}%

\end{center}

\begin{definition}\label{def: invertible_objects}
  An object \(X \in \cat C\) in a rigid category \(\cat C\) is called \emph{invertible} if its (left) evaluation and coevaluation are isomorphisms.
\end{definition}

It is an illustrative exercise to show that the right evaluations and coevaluations of an invertible objects must be isomorphisms as well.
Tensor products and duals of invertible objects are invertible too.
Hence, we can consider the full and rigid subcategory \(\Inv(\cat{C}) \subseteq \cat{C}\) of invertible objects of \(\cat C\).

\begin{definition}[{\cite{castelnuovo1905}}]\label{def: invertible_object}
  The \emph{Picard group} \(\Pic \cat C\) of a rigid category \(\cat C\) is the group of isomorphism classes of invertible objects in \(\cat C\).
  Its multiplication is induced by the tensor product of \(\cat C\), \ie \([\alpha] \cdot [\beta] \defeq [\alpha \otimes \beta]\) for \(\alpha, \beta \in \Inv(\cat C)\).
  The unit of \(\Pic \cat C\) is \([1]\) and for any \(\alpha \in \Inv(\cat C)\) we have \([\alpha]^{-1} =[\lDual \alpha]\).
\end{definition}

The next proposition will play a central role in our studies.
To formulate it, we introduce for any \(X \in \cat C\) and \(n \in \mathbb Z\) the shorthand-notation
\begin{equation}\label{eq: interated_duals_of_an_object}
  (X)^{n} \defeq
  \begin{cases}
    \text{The \(n\)-fold left dual of \(X\)}\quad & \text{if } n > 0, \\
    X & \text{if } n = 0, \\
    \text{The \(n\)-fold right dual of \(X\)}\quad & \text{if } n < 0.
  \end{cases}
\end{equation}

\begin{proposition}\label{prop: rigidity_yields_adjoints}
  For every object \(X \in \cat C\) in a rigid category \(\cat C\) we obtain two chains of adjoint endofunctors of \(\cat C\):
  \begin{align}
    &\dots \adjoint
      \blank \otimes (X)^{-1} \;\adjoint\; \blank \otimes X \;\adjoint\; \blank \otimes (X)^1
      \adjoint \dots \text{ and }
      \label{eq: adjoint_functors_left_dual} \\
    &\dots
      \adjoint (X)^1 \otimes \blank \;\adjoint\; X \otimes \blank \;\adjoint\; (X)^{-1} \otimes \blank
      \;\adjoint\; \dots
      \label{eq: adjoint_functors_right_dual}
  \end{align}
  Furthermore, \(\blank \otimes X\) (or \(X \otimes \blank\)) are equivalences of categories if and only if \(X\) is invertible.
\end{proposition}

\begin{proof}
  The existence of the stated chains of adjunctions follows from \cite[Proposition~2.10.8]{Etingof2015}.
  A straightforward calculation shows that tensoring (from the left or the right) with an invertible object establishes an equivalence of categories.

  Conversely, suppose that \(X \in \cat C\) is such that \(F \defeq \blank \otimes X\) is an equivalence of categories.
  The functor \(F\) and its quasi-inverse \(U\) are part of an adjunction with invertible unit \(\eta \from {\Id_{\cat C}} \to UF\) and counit \(\epsilon \from FU \to \Id_{\cat D}\), see for example \cite[Proposition~4.4.5]{Riehl2017}.
  By \cite[Proposition~4.4.1]{Riehl2017}, there exists a natural isomorphism \(\theta \from U \to \blank \otimes \lDual X\) which commutes with the respective counits and units.
  Applied to the monoidal unit \(1 \in \cat C\), we obtain
  \begin{equation*}
    \coev_{X}^l = \theta_X \eta_1 \qquad \text{and} \qquad \ev_X^l (\theta_1 \otimes \id_X)= \epsilon_1 .
  \end{equation*}
  It follows that \(X\) is invertible.
  An analogous argument shows that \(X\otimes \blank \) being an equivalence of categories entails \(X\) being invertible.
\end{proof}

In fact, taking duals extends to the \emph{left dualising functor},
\begin{equation}\label{eq: left_dual_fun}
  \lDual{(\blank)}\from \cat C \to \cat C^{\op, \tensorop},
\end{equation}
mapping objects to their left duals, and a morphism \(f\from X \to Y\) to
\[
  \lDual f \defeq (\ev_Y^l \otimes \id_{\lDual X})(\id_{\lDual Y} \otimes f \otimes \id_{\lDual X})(\id_{ \lDual Y} \otimes \coev_X^l) \from \lDual Y \to \lDual X.
\]
Its coherence morphisms are given by the isomorphisms induced by the uniqueness of duals.
Similarly, we have a \emph{right dualising functor} \(\rDual{(\blank)} \from \cat C \to \cat C^{\op, \tensorop}\).
To simplify computations, we want to `strictify' both of these.

\begin{definition}\label{def: stric_rigid_cat}
  A rigid monoidal category \(\cat C\) is called \emph{strict rigid}\footnote{
    The notion of `strict rigidity' is not prevalent in the literature and does not appear in \cite{Etingof2015}.
    However, hints towards it can be found for example in \cite[Section 5]{Schauenburg2001}.
  }
  if the dualisation functors \(\lDual{(\blank)}, \rDual{(\blank)}\from \mathcal C \to \mathcal C^{op,\otimes\text{-}\mathrm{op}}\) are strict and
  \begin{equation}\label{eq: comp_left_right_dual}
    \lDual{\big(\rDual{(\blank)}\big)} = \Id_{\cat C}
    = \rDual{\big(\lDual{(\blank)} \big)}.
  \end{equation}
\end{definition}

Our next result, a slight variation of \cite[Theorem~2.2]{Ng2007}, shows that every rigid category admits a \emph{rigid strictification}, \ie a monoidally equivalent strict rigid category.
The hinted at compatibility between the respective left and right duality functors is an immediate consequence of the fact that for any strong monoidal functor \(F \from \cat C \to \cat D\) between rigid categories there are natural monoidal isomorphisms
\begin{equation}\label{eq: monoidal_functors_compatible_with_rigid_structure}
  \phi_X \from F \big(\lDual{X}\big) \to \lDual{\big(F(X)\big)}, \qquad
  \psi_X \from F \big(\rDual{X}\big) \to \rDual{\big(F(X)\big)}, \quad \text{ for all } X \in \cat C.
\end{equation}

\begin{theorem}\label{thm: rigid_strictification}
  Every rigid category admits a rigid strictification.
\end{theorem}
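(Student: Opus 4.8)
The plan is to combine Mac Lane's strictification with a careful reconstruction of the dualisation functors. First I would invoke the ordinary monoidal strictification theorem (cited as \cite[Theorem~2.8.5]{Etingof2015}) to replace the given rigid category $\cat C$ by a monoidally equivalent \emph{strict} monoidal category $\cat D$, via a strong monoidal equivalence $F \from \cat C \to \cat D$ with quasi-inverse $G$. Since rigidity is preserved under monoidal equivalence — duals transport along $F$ using the natural monoidal isomorphisms $\phi_X, \psi_X$ of \eqref{eq: monoidal_functors_compatible_with_rigid_structure} — the category $\cat D$ is again rigid, and we may fix a choice of left and right duals in it. So the residual task is not to make the tensor product strict (that is already done) but to arrange that the \emph{dualisation functors} $\lDual{(\blank)}, \rDual{(\blank)} \from \cat D \to \cat D^{\op,\tensorop}$ become strict and mutually inverse in the sense of \eqref{eq: comp_left_right_dual}.

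The key step is therefore a second strictification, internal to the already-strict $\cat D$, that rigidifies the duality. I would follow the construction of \cite[Theorem~2.2]{Ng2007}: build a new category $\cat D'$ whose objects are formal expressions built from objects of $\cat D$ together with formal symbols for iterated left and right duals (equivalently, pairs consisting of an object of $\cat D$ and an integer recording how many times one has dualised, as suggested by the notation $(X)^n$ in \eqref{eq: interated_duals_of_an_object}), with morphisms inherited from $\cat D$ under the canonical coherence isomorphisms. On $\cat D'$ one \emph{defines} $\lDual{(\blank)}$ and $\rDual{(\blank)}$ to be the strict shift operators on this grading, so that left and right dualisation are by construction strict monoidal functors into $\cat D'^{\,\op,\tensorop}$ and are literally inverse to one another, giving \eqref{eq: comp_left_right_dual} on the nose. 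The evaluations and coevaluations are transported from $\cat D$ using the uniqueness of duals, and one checks the snake identities \eqref{eq: snake_identities} hold strictly in $\cat D'$.

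I would then verify that the inclusion-type comparison functor $\cat D' \to \cat D$ (forgetting the formal grading and evaluating the iterated-dual expressions to genuine objects) is a strong monoidal equivalence that is moreover compatible with the duality functors up to the canonical isomorphisms $\phi, \psi$. Composing $\cat C \trightarrow{F} \cat D \leftarrow \cat D'$ and inverting the last equivalence yields a monoidal equivalence from $\cat C$ to the strict rigid category $\cat D'$, which is exactly the asserted rigid strictification. The compatibility claim in the remark preceding the theorem — that the coherence data automatically respects both dualising functors — follows formally from \eqref{eq: monoidal_functors_compatible_with_rigid_structure} together with the uniqueness of duals, so no further choices are needed there.

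The main obstacle I anticipate is purely bookkeeping: verifying coherence. One must check that the transported evaluation/coevaluation morphisms on the formally-graded objects are well defined independently of how an object is presented as an iterated dual, and that the two natural isomorphisms $\phi$ and $\psi$ interact correctly with the identifications $(X)^1$ versus $\lDual X$ and $(X)^{-1}$ versus $\rDual X$. In other words, the subtlety is not in the high-level strategy but in confirming that the canonical coherence isomorphisms assemble consistently so that strictness of duality does not conflict with strictness of the tensor product already inherited from $\cat D$. Since \cite[Theorem~2.2]{Ng2007} does essentially this, I would lean on their argument and only indicate the modifications needed to simultaneously strictify both $\lDual{(\blank)}$ and $\rDual{(\blank)}$ and to enforce the mutual-inverse condition \eqref{eq: comp_left_right_dual}.
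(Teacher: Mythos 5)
Your proposal is correct and follows essentially the same route as the paper: the paper likewise takes the (already Mac Lane--strictified) rigid category as input and builds a new category whose objects are formal iterated duals with integer grading, on which dualisation acts as a strict shift and the evaluations and coevaluations are assembled from the chosen duals. The only cosmetic difference is that the paper uses finite \emph{sequences} $(X_1^{n_1},\dots,X_i^{n_i})$ rather than single graded objects, so that concatenation gives a strict tensor product and the left dual is the order-reversed, shifted sequence --- exactly the bookkeeping your ``formal expressions'' would have to resolve to.
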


\begin{proof}
  Taking a rigid and strict monoidal category \(\cat C\) as our input, we build a monoidally equivalent strict rigid category \(\cat D\).
  The objects of \(\cat D\) are (possibly empty) finite sequences
  \((X_1^{n_1}, \dots, X_i^{n_i})\) of objects \(X_1, \dots, X_i \in \cat C\) adorned with integers \(n_1, \dots, n_i \in \mathbb Z\).
  To define its morphisms, recall the notation of Equation~\eqref{eq: interated_duals_of_an_object} and set:
  \begin{equation*}\scalemath{0.94}{%
      \cat D(
      (X_1^{n_1}, \dots, X_i^{n_i}),
      (Y_1^{m_1}, \dots, Y_j^{m_j})
      )
      \defeq
      \cat C(
      {(X_1)}^{n_1} \otimes \dots \otimes {(X_i)}^{n_i},
      {(Y_1)}^{m_1} \otimes \dots \otimes {(Y_j)}^{m_j}
      ).}
  \end{equation*}
  The category \(\cat D\) is strict monoidal when equipped with the concatenation of sequences as tensor product and the empty sequence as unit.
  By construction, there exists a strict monoidal equivalence of categories \(F \from \cat D \to \cat C\),
  which maps any object
  \((X_1^{n_1}, \dots, X_i^{n_i}) \in \cat D\) to \({(X_1)}^{n_1} \otimes \dots \otimes {(X_i)}^{n_i} \in \cat C\),
  as well as every morphism to itself.\footnote{
    In the definition of \(F\from \cat D \to \cat C\) we regard the unit of \(\cat C\) as the empty tensor product.
  }
  Fixing an object \(X \defeq (X_1^{n_1}, \dots, X_i^{n_i}) \in \cat D\),
  define its left dual to be given by
  \(\lDual X \defeq (X_i^{n_i+1}, \dots, X_1^{n_1+1})\) with
  evaluation and coevaluation morphisms as shown in the following diagram
  \begin{center}
  \tikzsetnextfilename{inductive_evs_and_coevs}%
  \input{sections/tikzfigures/inductive_evs_and_coevs.tikz}%

  \end{center}
  where for all \(1 \leq k \leq i\) we set
  \begin{equation*}
    \phi_k \defeq
    \begin{cases}
      \ev^l_{(X_k)^{n_k}} &\text{ if } n_k \geq 0, \\
      \ev^r_{(X_k)^{n_k+1}} &\text{ if } n_k < 0, \\
    \end{cases}
    \qquad
    \text{ and }
    \qquad
    \psi_k \defeq
    \begin{cases}
      \coev^l_{(X_k)^{n_k}} & \text{ if } n_k \geq 0, \\
      \coev^r_{(X_k)^{n_k+1}} &\text{ if } n_k < 0. \\
    \end{cases}
  \end{equation*}
  We define the right dual of \(X\) to be \(\rDual X \defeq (X_i^{n_i-1}, \dots, X_1^{n_1-1})\) with evaluation and coevaluation similar to the above construction.
  It follows that \(\cat D\) is strict rigid.
\end{proof}

Many applications require that the objects of a rigid category are isomorphic to their double duals in a way which is compatible with the monoidal structure.
One of our aims is to gain a representation theoretic approach to detecting such a property.

\begin{definition}\label{def: pivotal_category}
  A \emph{pivotal category} is a rigid category \({\cat C}\) together with a fixed monoidal natural isomorphism
  \begin{equation}\label{eq:_relation_of_pivot}
    \rho\from \Id_{\cat C} \to \lbiDual{(\blank)},
  \end{equation}
  which is referred to as a \emph{pivotal structure} of \({\cat C}\).
\end{definition}

Rigid categories do not have to admit a pivotal structure and, if they do, it need not be unique.
Examples coming from Hopf algebra theory are given in \cite{Kauffman1993} and \cite{Halbig2018, Halbig2019}.
However, Shimizu showed that every rigid category admits a universal pivotal category, called its \emph{pivotal cover}, see \cite{Shimizu2015}.

\subsection{Braided categories}\label{subsec: braided_and_ribbon_cats}

Braidings are natural transformations relating the tensor product to its opposite.
They where introduced by Joyal and Street in \cite{Joyal1993}, building on the notion of symmetries studied amongst others in \cite{MacLane1963, Eilenberg1966}.

\begin{definition}\label{def: braided_cat}
  A \emph{braiding} on a monoidal category \(\cat C\) is a natural isomorphism
  \begin{equation*}
    \sigma_{X, Y} \from X \otimes Y \to Y \otimes X, \qquad \qquad \text{ for all } X, Y \in \cat C,
  \end{equation*}
  which satisfies \emph{the hexagon axioms}\footnote{
    The name `hexagon axioms' is due to the fact, that in the non-strict setting, the defining equations
    can be organised as a commuting, hexagon-shaped diagram; see \cite{Joyal1993}.
  }.

  The pair \((\cat C, \sigma)\) is referred to as \emph{braided monoidal category}.
\end{definition}

\begin{remark}\label{rem: braiding_compatibility_with unit}
  We will often make use of the fact that the braiding of any object \(X\in \cat C\) with the unit \(1 \in \cat C\) of a braided category \((\cat C, \sigma)\) is trivial.
  This is a consequence of the hexagon identities; computing
  \(\sigma_{X,1} = \sigma_{X, 1\otimes 1} = (\id_1 \otimes \sigma_{X,1})(\sigma_{X,1} \otimes \id_1) = \sigma_{X,1} \sigma_{X,1}\)
  shows \(\sigma_{X,1}\) to be an invertible idempotent, hence \(\sigma_{X,1} = \id_X\).
\end{remark}

Braidings are depicted in the graphical calculus by crossings of strings subject to Reidemeister-esque identities, see \cite{Selinger2011}.
The following figure shows a braiding, its inverse, the hexagon identity, and the naturality of the braiding in its first argument.

\begin{center}
  \tikzsetnextfilename{braidings}%
  \input{sections/tikzfigures/braidings.tikz}%

\end{center}

\subsection{Bimodule categories and the centre construction}\label{subsec: bimodule_cats_and_centre_constr}

Just as monoids can act on sets, monoidal categories can act on categories.
Thinking representation theoretically therefore advocates studying monoidal categories through their modules.
In parallel with our treatment of monoidal categories, we will focus solely on their `strict modules'.
Again, a more general theory is possible by weakening the associativity and unitality of the action.

\subsubsection{Left right and bimodule categories}\label{subsubsec: left_and_right_mod_cat}

\begin{definition}\label{def: module_category}
  A \emph{strict left module (category)} over a monoidal category \({\cat C}\) is a pair \((\cat M, \lact)\) comprising a category \(\cat M\) and an \emph{action} of \({\cat C}\) on \(\cat M\) implemented by a functor \(\lact\from \cat C \times \cat M \to \cat M\) such that
  \begin{equation}
    (\blank \otimes \blank) \lact \blank = \blank \lact ( \blank \lact \blank) \qquad\quad \text{ and } \qquad \quad 1 \lact - = \Id_{\cat M}.
  \end{equation}
\end{definition}

To keep our notation concise, we will simply speak of modules, instead of strict module categories, over a monoidal category.

For a functor between modules to be structure preserving, it has to satisfy a variant of equivariance which is encoded by a natural isomorphism.

\begin{definition}\label{def: functor_mod_cat}
  Let \(\cat M\) and \(\cat N\) be left modules over a monoidal category \(\cat C\).
  A \emph{functor of left modules} is a functor \(F\from \cat M \to \cat N\) together with a natural isomorphism
  \begin{equation*}
    \delta_{X,M} \from F(X \lact M) \to X \lact F(M), \qquad \qquad \text{ for all } X \in \cat C \text{ and } M \in \cat M
  \end{equation*}
  such that for all \(X, Y \in \cat{C}\) and \(M \in \cat{M}\), we have
  \(\delta_{X \otimes Y, M} = (\id_X \lact \delta_{Y, M}) \delta_{X, Y \lact M}\)
  and
  \(\id_{M} = \delta_{1, M}\).
  We call \((F, \delta)\) \emph{strict} if \(\delta\) is given by the identity.
\end{definition}

With respect to the analogy between oplax monoidal functors and coalgebras, module functors play the role of (strong) comodules over the identity functor.
We will encounter the  more general concept of comodule functors in Sections \ref{sec: monads} and \ref{sec: mon_perspective}.

An \emph{equivalence of module categories} is a functor of module categories \(F \from \cat M \to \cat N\) that is an equivalence.
As with monoidal categories, it admits a quasi-inverse functor of module categories \(G \from \cat N \to \cat M\) and the natural isomorphisms \(FG \rightarrow \Id_\cat N\) and \(GF \rightarrow \Id_\cat M\) are compatible with the respective `coactions' in a way explained in the next definition.

\begin{definition}\label{def: mor_of_left_mod_functors}
  Let \(F, G \from \cat M \to \cat N\) be two functors of left modules over a monoidal category \({\cat C}\).
  A \emph{morphism of left module functors} is a natural transformation \(\phi\from F \to G\) satisfying
  \((\id_X \lact \phi_M) \; \delta^{(F)}_{X,M} = \delta^{(G)}_{X,M} \; \phi_{X\lact M}\),
  for all \(X \in \cat C\) and \(M \in \cat M\).
\end{definition}

\emph{Module adjunctions} will be a corner stone of our investigation.
They are defined as adjunctions \(\adj{F}{G}{\cat M}{\cat N}\) of module functors between module categories whose unit and counit are module natural transformations.

A theory of \emph{right modules} can be formulated in a similar fashion.
More precisely, right modules over a monoidal category \(\cat C\) can be identified with left modules over \(\cat C^{\tensorop}\).
If we assume some additional conditions on \(\cat C\), we could define its bimodules as left modules over an `enveloping category' \(\cat C^e\) of \(\cat C\).
For our purposes, however, it will be more beneficial to define them explicitly in terms of categories with compatible left and right actions.

\begin{definition}\label{def: bimodule_category}
  A \emph{(strict) bimodule} \((M,\lact, \ract)\) over a monoidal category \(\cat C\) is a category \(\cat M\) which is simultaneously a left and right module and
  \begin{equation}
    (\blank \lact \blank) \ract \blank = \blank \lact ( \blank \ract \blank).
  \end{equation}
\end{definition}

\begin{example}\label{ex: regular_bimodule}
  The prime example of a bimodule over a monoidal category \({\cat C}\) is the \emph{regular bimodule} \({}_{\Id_{\cat C}} {\cat C}_{\Id_{\cat C}}\)\footnote{%
    This notation will be explained in more detail in Section~\ref{subsec: twisted_centres}.%
  }.
  As a category, it is simply \(\cat C\) and the left and right actions are given by tensoring from the left and right, respectively.
\end{example}

\begin{remark}\label{rem: tensor_product_of_bimodule_categories}
  If \(\cat C\) is for example a tensor category, its bimodules form a monoidal \(2\)-category, see \cite{Greenough2010}.
\end{remark}

Since we will not work with bimodule functors and their natural transformations, we will not state their precise definitions.
Rather, we remark that they equate to (strong) `bicomodules' over the identity functor.

\subsubsection{The Drinfeld centre of a monoidal category}\label{subsubsec: Drinfeld_centre}

The centre construction can be used to obtain a braided category from a monoidal one.
We work in a slightly more general setup than \cite[Chapter 7]{Etingof2015} and define centres for bimodule categories.
See \cite{Gelaki2009,Bruguieres2012, Hassanzadeh2019, Kowalzig2020} for similar approaches.

\begin{definition}\label{def: half_braiding}
  Let \(\cat M\) be a bimodule over a monoidal category \(\cat C\) and \(M\in \cat M\) an object.
  A \emph{half-braiding} on \(M\) is a natural isomorphism
  \begin{equation*}
    \sigma_{M, X} \from M \ract X \rightarrow X\lact M, \qquad \qquad \text{ for all } X \in \cat C,
  \end{equation*}
  satisfying for all \(X,Y \in \cat C\) the hexagon axiom
  \begin{equation}\label{eq: hexagon_axiom_half_braiding}
    \sigma_{M,X\otimes Y} = ({\id_X} \lact \sigma_{M,Y})(\sigma_{M, X} \ract \id_Y).
  \end{equation}
\end{definition}

Let \(\sigma_{M, \blank} \from M \ract - \to \blank \lact M\) be a half-braiding on an object \(M \in \cat M\).
The same arguments as in Remark~\ref{rem: braiding_compatibility_with unit} show that \(\sigma_{M, 1} = \id_M\) for all \(M \in \cat M\).

Thinking of objects plus half-braidings as `central elements', one can try to mimic the centre construction from representation theory.
This leads to the following definition.

\begin{definition}\label{def: centre_bimodue_category}
  The centre of a bimodule \(\cat M\) over a monoidal category \(\cat C\) is the category \(\ZCat M\).
  It has as objects pairs \((M, \sigma_{M, \blank})\) comprising an object \(M \in \cat M\) together with a half-braiding \(\sigma_{M,\blank}\) on \(M\).
  The set of morphisms between two objects \((M, \sigma_{M,\blank})\), \((N, \sigma_{N,\blank}) \in \ZCat M\), consists of those morphisms \(f \in \cat M(M,N)\) which commute with the half-braidings.
  That is,
  \begin{equation}\label{eq: def_rel_morphism_in_centre}
    (\id_X \lact f)\sigma_{M,X} = \sigma_{N,X} (f \ract \id_X), \qquad \qquad \text{ for all } X \in \cat C.
  \end{equation}
\end{definition}

There is a canonical forgetful functor \(U^{(M)} \from \ZCat M \to \cat M\).
Unlike classical representation theory where the centre of a bimodule is a subset, \(U^{(M)}\) need not be injective on objects in general.

\begin{example}\label{ex: Drinfeld_centre}
  The centre \(\ZCat C\) of the regular bimodule of a monoidal category \(\cat C\) is called the \emph{Drinfeld centre} or simply centre of \(\cat C\).
  It is braided monoidal.
  The tensor product is defined by
  \((M, \sigma_{M, \blank}) \otimes (N, \sigma_{N, \blank}) \defeq ( M \otimes N, \sigma_{M\otimes N, \blank})\) with
  \begin{equation*}
    \sigma_{M\otimes N, X} \defeq (\sigma_{M, X} \otimes \id_N)({\id_M} \otimes \sigma_{N, X}), \qquad \qquad \text{ for all } X \in \cat C.
  \end{equation*}
  Its braiding is given by the respective half-braidings.
  The hexagon axioms follow from Equation~\eqref{eq: hexagon_axiom_half_braiding} and the definition of the tensor product of \(\ZCat C\).
\end{example}

The next proposition uses the shorthand notation for iterated duals given in Equation~\eqref{eq: interated_duals_of_an_object}.

\begin{proposition}[{\cite[Lemma~7]{joyal91:yang-baxter}}] \label{prop: rigdity_of_centre}
  Suppose \(\cat C\) to be strict rigid.
  Its Drinfeld centre \(\ZCat C\) inherits the rigid structure of \(\cat C\).
  That is, for all \((X, \sigma_{X,-}) \in \ZCat C\) we have
  \begin{equation*}
    U^{(Z)} \big(\lDual{(X, \sigma_{X,-})}\big) = \lDual X, \qquad \qquad
    U^{(Z)} \big(\rDual{(X, \sigma_{X,-})}\big) = \rDual X.
  \end{equation*}
  Moreover, for every \(n \in \mathbb Z\) and \(X \in \ZCat C\) we have
  \begin{equation}\label{eq: comp_braiding_n_duals}
    \sigma_{(X)^{n}, (Y)^{n}} = (\sigma_{X,Y})^{n}, \qquad \qquad \text{ for all } Y \in \cat C.
  \end{equation}
\end{proposition}


\section{Heaps}\label{sec: heaps}
Heaps can be thought of as groups without a fixed neutral element. Prüfer studied their abelian version under the name \emph{Schar} in \cite{Pruefer1924}.
Since then, the notion has been adapted to the non-abelian case, see \cite{Hollings2017}. Recently, their homological properties were studied in \cite{Elhamdadi2019}; a generalisation towards a `quantum version' of heaps is hinted at in \cite{Skoda2007}. We follow Section 2 of \cite{Brzezinski2020} for our exposition.

\begin{definition}\label{def: heap}
  A \emph{heap} is a set \(G\) together with a ternary operation
  \begin{equation*}
    \langle \blank , \blank , \blank\rangle \from G\times G \times G \to G,
  \end{equation*}
  which we call the \emph{heap operation},
  satisfying a generalised associativity axiom and the \emph{\Malcev identities}, of which we think as unitality axioms:
  \begin{gather}
    \langle g, h, \langle i,j,k\rangle \rangle = \langle \langle g,h,i\rangle , j , k \rangle, \qquad \qquad \text{ for all } g,h, i ,j, k \in G, \label{eq: heap_ass_constraint} \\
    \langle g, g, h \rangle = h = \langle h, g, g \rangle, \qquad \qquad \text{ for all } g, h \in G.
    \label{eq: heap_unit_constraint}
  \end{gather}
\end{definition}

There are two peculiarities we want to point out.
First, our definition does, intentionally, not exclude the empty set from being a heap.
Second, due to a slightly different setup, an additional `middle' associativity axiom is required in \cite{Hollings2017}.
However, as noted in \cite[Lemma~2.3]{Brzezinski2020}, it is implied by the `outer' associativity and the \Malcev identities.

\begin{definition}\label{def: morphism_of_heaps}
  A map \(f \from G \to H\) between heaps is a \emph{morphism of heaps} if
  \begin{equation}
    f \left( \langle g, h, i\rangle \right) = \langle f(g), f(h), f(i) \rangle, \qquad \qquad \text{ for all } g,h, i \in G.
  \end{equation}
\end{definition}

The next lemma can be shown by mimicking the proof of its group theoretical version.

\begin{lemma}\label{lem: isomorphisms_of_heaps}
  A morphism of heaps \(f \from G \to H\) is an isomorphism if and only if it is bijective.
\end{lemma}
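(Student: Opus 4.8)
The plan is to prove both implications of the biconditional. The nontrivial direction is that a bijective morphism of heaps has a heap-morphism inverse; the converse (an isomorphism is bijective) is immediate from the definition of isomorphism in any concrete algebraic setting, since an isomorphism $f$ comes equipped with a two-sided inverse $g$ which is in particular a set-theoretic inverse, hence $f$ is bijective. So the content lies entirely in showing that if $f \from G \to H$ is a bijective morphism of heaps, then its set-theoretic inverse $f^{-1} \from H \to G$ is automatically a morphism of heaps.

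First I would let $g \defeq f^{-1}$ denote the set-theoretic inverse, which exists and is unique because $f$ is a bijection. The goal is to verify the defining identity of Definition~\ref{def: morphism_of_heaps} for $g$, namely that $g(\langle a, b, c\rangle) = \langle g(a), g(b), g(c)\rangle$ for all $a, b, c \in H$. The standard trick, mirroring the group-theoretic argument, is to apply $f$ to both sides and use injectivity of $f$ to conclude. Concretely, since $f$ is a heap morphism,
\begin{equation*}
	f\bigl(\langle g(a), g(b), g(c)\rangle\bigr) = \langle f(g(a)), f(g(b)), f(g(c))\rangle = \langle a, b, c\rangle,
\end{equation*}
where the last equality uses $fg = \Id_H$. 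On the other hand, $f\bigl(g(\langle a,b,c\rangle)\bigr) = \langle a,b,c\rangle$ as well, again by $fg = \Id_H$. Thus $f$ sends both $g(\langle a,b,c\rangle)$ and $\langle g(a), g(b), g(c)\rangle$ to the same element $\langle a,b,c\rangle$, and since $f$ is injective these two preimages must coincide, establishing the required identity.

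This completes the argument that $g = f^{-1}$ is a heap morphism, so $f$ is an isomorphism of heaps. The proof uses only the bijectivity of $f$ and the fact that $f$ respects the ternary operation; no appeal to the associativity or \Malcev identities of Definition~\ref{def: heap} is needed, which is exactly why the statement parallels its group-theoretic counterpart so closely.

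I do not anticipate any genuine obstacle here: the single point requiring mild care is keeping the bookkeeping of which composite ($fg$ versus $gf$) is being used straight, and phrasing the final step as an application of injectivity rather than of surjectivity. The lemma is essentially a formal consequence of bijectivity plus the preservation of one algebraic operation, and the paper's remark that it can be shown by `mimicking the proof of its group theoretical version' is accurate.
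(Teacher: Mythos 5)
Your argument is correct and is exactly the group-theoretic-style argument the paper has in mind when it says the lemma ``can be shown by mimicking the proof of its group theoretical version'': apply $f$ to both sides of the desired identity for $f^{-1}$ and invoke injectivity. Your additional observation that neither associativity nor the \Malcev identities are needed is accurate and consistent with the paper's (omitted) proof.
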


By forgetting its unit, any group defines a heap.
Conversely, any non-empty heap can be turned into a group by choosing a fixed element to act as unit, see \cite{Certaine1943}.

\begin{lemma}\label{lem: groups_to_heaps}
  Every group \((G, \cdot, e)\) is a heap via
  \begin{equation*}
    \langle \blank,\blank,\blank \rangle \from G\times G \times G \to G, \qquad \qquad \langle g, h, i \rangle \defeq g \cdot h^{-1} \cdot i.
  \end{equation*}
  A morphism of groups becomes a morphism of the induced heaps.
\end{lemma}

\begin{lemma}\label{lem: heaps_to_groups}
  A non-empty heap \(H\) with a fixed element \(e \in H\) can be considered as a group with unit \(e\) via the multiplication
  \begin{equation*}
    \blank \cdot_e \blank \from H \times H \to H, \qquad g \cdot_e h \defeq \langle g, e, h \rangle.
  \end{equation*}
  The inverse of an element \(g \in H\) with respect to \(\cdot_e\) is given by \(g^{-1} \defeq \langle e, g, e \rangle\).
  A morphism of heaps is a morphism of the induced groups, provided it maps the fixed element of its source to the fixed element of its target.
\end{lemma}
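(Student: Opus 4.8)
The plan is to verify the three group axioms for $(H, \cdot_e)$ directly from the heap multiplication, leaning throughout on generalised associativity \eqref{eq: heap_ass_constraint} and the \Malcev identities \eqref{eq: heap_unit_constraint}. First I would check that $e$ is a two-sided unit: computing $e \cdot_e g = \langle e, e, g \rangle$ and $g \cdot_e e = \langle g, e, e \rangle$, both reduce to $g$ by an immediate application of \eqref{eq: heap_unit_constraint}. This is the easiest part and requires no associativity.

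Next I would establish associativity of $\cdot_e$. Expanding both sides, one has
\begin{equation*}
	(g \cdot_e h) \cdot_e k = \langle \langle g, e, h\rangle, e, k \rangle
	\qquad \text{and} \qquad
	g \cdot_e (h \cdot_e k) = \langle g, e, \langle h, e, k \rangle \rangle.
\end{equation*}
These are equal by the generalised associativity axiom \eqref{eq: heap_ass_constraint}, applied with the substitution $(g,h,i,j,k) \mapsto (g,e,h,e,k)$. This is the step where the heap axiom does the real work, so I would flag it as the conceptual heart of the argument, though computationally it is a single rewrite.

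For inverses, I would take $g^{-1} \defeq \langle e, g, e\rangle$ and verify $g \cdot_e g^{-1} = e = g^{-1} \cdot_e g$. Here
\begin{equation*}
	g \cdot_e g^{-1} = \langle g, e, \langle e, g, e\rangle \rangle = \langle \langle g, e, e\rangle, g, e\rangle = \langle g, g, e\rangle = e,
\end{equation*}
where the second equality is \eqref{eq: heap_ass_constraint} and the remaining ones are \Malcev reductions; the computation for $g^{-1} \cdot_e g$ is symmetric. This is the step I expect to be the main obstacle, since it is the only place where associativity and unitality must be interleaved carefully to collapse the expression, and one must be attentive to which of the two \Malcev identities applies at each stage.

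Finally, for the morphism claim, suppose $f \from H \to H'$ is a heap morphism with $f(e) = e'$. Then
\begin{equation*}
	f(g \cdot_e h) = f(\langle g, e, h\rangle) = \langle f(g), f(e), f(h)\rangle = \langle f(g), e', f(h)\rangle = f(g) \cdot_{e'} f(h),
\end{equation*}
so $f$ respects the induced multiplications, and since $f(e) = e'$ it sends unit to unit; compatibility with inverses then follows automatically. This last part is purely formal once the group structure is in hand.
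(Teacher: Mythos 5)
Your proof is correct, and each step checks out against the axioms \eqref{eq: heap_ass_constraint} and \eqref{eq: heap_unit_constraint}: the unit law is an immediate \Malcev reduction, associativity is exactly the substitution $(g,e,h,e,k)$ into the generalised associativity axiom, the inverse computation $\langle g, e, \langle e,g,e\rangle\rangle = \langle\langle g,e,e\rangle, g, e\rangle = \langle g,g,e\rangle = e$ (and its mirror image) is valid, and the morphism claim is the formal one-line computation you give. The paper itself supplies no proof of this lemma---it is quoted as a classical fact with a reference to Certaine---so your direct verification is precisely the standard argument the authors are implicitly invoking.
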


More generally, if \(\cat{G}\) is a groupoid and \(x, y \in \cat{G}\),
then \(\cat{G}(x, y)\) becomes a heap; the heap operation is given by
\(\langle f, g, h \rangle \defeq f g^{-1} h\).
The following example is a special case of this construction,
which will play a prominent role in our investigation.

\begin{example}\label{ex: heap_of_nat_isos}
  Let \(F, G \from \cat C \to \cat C\) be two oplax monoidal endofunctors. The set
  \begin{equation*}
    \Isospace_\otimes (F, G) \defeq \left\{\text{oplax monoidal natural isomorphisms from \(F\) to \(G\)} \right\}
  \end{equation*}
  bears a heap structure, the heap operation being
  \begin{equation}
    \langle \blank , \blank , \blank \rangle \from \Isospace_\otimes (F, G)^3 \to \Isospace_\otimes (F, G), \qquad \qquad
    \langle \phi, \psi, \xi \rangle = \phi \psi^{-1} \xi.
  \end{equation}
\end{example}


\section{Pivotal structures and twisted centres}\label{sec: pivotality_of_the_df_centre}
In this section, we study the relations between pairs in involution, anti-\YetterDrinfeld modules and isomorphisms between the Drinfeld and anti-Drinfeld double from a categorical point of view.
Our approach is representation theoretic in nature.
We consider variants of the regular bimodule of a rigid category \(\cat C\) with either the left or right action twisted by a strict monoidal endofunctor.
Their centres are canonically modules over the Drinfeld centre.
These \emph{twisted centres} inherit a notion of duality which follows in close parallel to that of \(\ZCat C\).
Module functors between the Drinfeld and a twisted centre are determined by their value on the unit object.
A consequence of the above sketched duality is that module equivalences correspond to objects in the twisted centre, which behave as if they were invertible.
We gather these objects into the Picard heap of the twisted centre.
If we twist with the left double dual functor, we obtain a generalised version of the anti-\YetterDrinfeld modules, see~\cite{Hassanzadeh2019}.
Its Picard heap has an alternative interpretation as quasi-pivotal structures; appropriate analogues of pairs in involution.
This observation leads us to the desired relations in categorical terms, given in Theorem~\ref{thm: equivalence_in_the_rigid_setting}.

In~\cite{Shimizu2016}, Shimizu observed that quasi-pivotality of \(\cat C\) induces pivotality of \(\ZCat C\).
We recall his proof from the perspective of twisted centres and investigate how this construction is related to the so-called symmetric centre of \(\cat C\).
This leads to an injective heap morphism from a quotient of the Picard heap of the generalised anti-\YetterDrinfeld modules to the heap of pivotal structures of \(\ZCat C\).
We end the section, by constructing a category such that this morphism is not surjective.

In the following, \(\cat C\) denotes a strict rigid category.

\subsection{Twisted centres and their Picard heaps}\label{subsec: twisted_centres}

The regular action is not the only way in which we can consider \(\cat C\) as a bimodule over itself.
Given two strict monoidal endofunctors \(L, R \from \cat C \to \cat C\), we can `twist' the action by defining for all \(V, W, X, Y \in \cat C\) and \(f \from V\to W, g \from X \to Y\),
\begin{equation}\label{eq: twisted_action}
  \begin{aligned}
    X &\lact Y \defeq L(X) \otimes Y, &\qquad\qquad
                          f &\lact g \defeq L(f) \otimes g, \\
    Y &\ract X \defeq Y \otimes R(X), & \qquad\qquad
                          g &\ract f \defeq g \otimes R(f).
  \end{aligned}
\end{equation}
We write \({}_{L}{\cat C}_{R}\) for the bimodule obtained in this manner and call it the \emph{bimodule obtained by twisting with \(L\) from the left and \(R\) from the right} or, if the functors \(L\) and \(R\) are apparent from the context, simply a \emph{twisted bimodule}.
Accordingly, we refer to \(\ZCat[L]{C}[R]\) as a \emph{twisted centre}.
In case we want to stress that \(L\) or \(R\) are the identity functors, we write \(\cat C_R \defeq {}_{\Id_{\cat C}}{\cat C}_{R}\) and \({}_L{\cat C} \defeq {}_{L}{\cat C}_{\Id_{\cat C}}\) and speak of a \emph{right} and \emph{left} twisted bimodule, respectively.
Following this pattern, \(\ZCat{C}[R]\) and \(\ZCat[L]{C}\) are called \emph{right} and \emph{left} twisted centres.

\begin{remark} \label{rmk:twisted-centre-as-loop}
  Another natural description of twisted centres is obtained from the perspective of bicategories.
  Given a monoidal category \(\cat{C}\)
  let \(\mathbf{B}\cat{C}\) denote its \emph{delooping};
  that is, \(\mathbf{B}\cat{C}\) is a bicategory with a single object \(\bullet\),
  and \(\mathbf{B}\cat{C}(\bullet, \bullet) \simeq \cat{C}\).
  Via delooping, strong monoidal functors are identified with pseudofunctors between one-object bicategories and vice versa.
  However, not every term in the language of monoidal categories is transferred straightforwardly into a higher-categorical analogue.
  Let \(\End{}{\mathbf{B}\cat{C}}\) be the bicategory of endo-pseudofunctors of $\mathbf{B}\cat{C}$, their pseudonatural transformations, and modifications.
  Then \(\End{}{\mathbf{B}\cat{C}}(L, R) \simeq \ZCat[L]{C}[R]\).
  A proof  is given for example in \cite[Proposition~3.6]{femić23:categ-yetter}.
\end{remark}

The forgetful functor from the centre of a twisted bimodule to the underlying monoidal category is faithful.
Therefore, we can use the graphical calculus discussed previously as long as we pay special attention to the half-braidings.
Given that we will often deal with multiple twisted centres at once, we introduce a colouring scheme to help us keep track of the various categories:

\begin{thmlist}
  \item Red for objects in the right twisted centre \(\ZCat{C}[R]\),
  \item blue for objects in the left twisted centre \(\ZCat[L]{C}\) and
  \item black for objects in the Drinfeld centre \(\ZCat C\) or \(\cat C\).
\end{thmlist}
For example, the half-braidings of objects \(A \in \ZCat{C}[R]\) and \(Q \in \ZCat[L]{C}\) are:
\begin{center}
  \tikzsetnextfilename{a_and_q_braidings}%
  \input{sections/tikzfigures/a_and_q_braidings.tikz}%

\end{center}

\begin{remark}\label{rem: other_variants_of_twisted_centres}
  One can easily imagine a more involved setting than what is described above by twisting with an oplax monoidal functor \((L, \Delta, \varepsilon) \from \cat C \to \cat C\) from the left and a lax monoidal functor \((R, \mu, \eta)\) from the right.
  We hypothesise that \({}_{L}{\cat C}_{R}\) would be a type of `oplax-lax' bimodule over \(\cat C\),
  whose actions are associative and unital only up to coherent natural transformations, subject to laws as described in~\cite[Section 2]{Szlachanyi2012}.
  At least conceptually, this unifies our subsequent considerations with the centres studied in~\cite{Bruguieres2012}.
  We will revisit these more general structures in Section~\ref{sec: mon_perspective} and for now only remark that the half-braiding of an object \(X \in \ZCat[L]{C}[R]\) is a natural transformation \(\sigma_{X, \blank} \from X\otimes R(\blank) \to L(\blank) \otimes X\), which has to satisfy:
  \begin{equation}
  \tikzsetnextfilename{generalised_braiding}%
  \input{sections/tikzfigures/generalised_braiding.tikz}%

  \end{equation}
\end{remark}

\begin{convention}\label{conv: twisting_with_only_one_functor}
  In what follows, we are predominantly interested in twisting with the same strict monoidal functor from the left or right.
  For the purpose of brevity, we therefore fix such a functor
  \(L=R \from \cat C \to \cat C\) and consider the categories \({}_{L}{\cat C}\) and \({\cat C}_{R}\).
\end{convention}

Suppose we are given three objects
\begin{equation*}
  (A, \sigma_{A, \blank}) \in \ZCat{C}[R],
  \qquad\qquad
  (Q, \sigma_{Q, \blank}) \in \ZCat[L]{C}
  \qquad\text{and}\qquad
  (X, \sigma_X) \in \ZCat C.
\end{equation*}

The diagrams below show that various tensor products of the underlying objects in \(\cat C\) admit `canonical' half-braidings.
\begin{equation}\label{eq: gluing_of_half_brainings}
  \tikzsetnextfilename{gluing_of_half_brainings}%
  \input{sections/tikzfigures/gluing_of_half_brainings.tikz}%

\end{equation}

The top row suggests a right action of \(\ZCat C\) on left twisted centres and a left action on right twisted centres.

\begin{proposition}\label{prop: Left_or_right_twisted_centre_is_mod_cat}
  The tensor product of \(\cat C\) extends to a left and a right action of the Drinfeld centre \(\ZCat{C}\) on \(\ZCat{C}[R]\) and \(\ZCat[L]{C}\), respectively.
  The half-braidings are as defined in Diagram~\eqref{eq: gluing_of_half_brainings}.
\end{proposition}

\begin{remark}\label{rem: right_vs_left_twisted_centres}
  Right and left twisted centres are two sides of the same coin.
  We write \(\overline{\cat C} \defeq \cat C^{\op, \tensorop}\).
  A direct computation proves the categories \(\ZCat*{\overline{\cat C}}[R]\) and \(\ZCat[R]{C}^{\op}\) to be the same.
  This identification is compatible with the respective actions since \(\blank \otimes^{\op} R(\blank) = R(\blank) \otimes \blank \) and \(\sigma_{X \otimes^{\op} A, \blank} = \sigma_{A \otimes X, \blank}\) for all \(X \in \ZCat{\overline C}\) and \(A \in \ZCat*{\overline{\cat C}}[R]\).
  According to these considerations, from now on we deliberately restrict ourselves to the study of right twisted centres.
\end{remark}

The left dual \(\lDual A\) of any object \((A, \sigma_{A,\blank}) \in \ZCat{C}[R]\) can be turned into an object of \(\ZCat[R]{C}\) if we equip it with the half-braiding

\begin{equation}\label{eq: braiding_of_left_dual_twisted_centre}
  \tikzsetnextfilename{braiding_of_left_dual_twisted_centre}%
  \input{sections/tikzfigures/braiding_of_left_dual_twisted_centre.tikz}%

\end{equation}
This suggest that, in analogy with  Proposition~\ref{prop: rigdity_of_centre}, we may lift the dualising functor of $\cat{C}$ to the  level of twisted centres by interchanging right with left twists.
A more conceptual description  is provided in~\cite[Proposition~3.15]{femić23:categ-yetter}.

\begin{proposition}\label{prop: left_dualising_as_a_module_fun}
  The left dualising functor \(\lDual{(\blank)} \from \cat C \to \cat C^{\op, \tensorop}\) lifts to a functor between right and left twisted centres
  \begin{equation}
    \lDual{(\blank)} \from \ZCat{C}[R] \to \ZCat[R]{C}^{\op}.
  \end{equation}
\end{proposition}

The half-braidings displayed in the right column of Diagram~\eqref{eq: gluing_of_half_brainings} show that every object \(A \in \ZCat{C}[R]\) gives rise to two functors of left modules over \(\ZCat C\),

\begin{equation} \label{eq: objs_as_mod_funs}
  \blank \otimes A \from \ZCat C \to \ZCat{C}[R]
  \qquad \text{ and } \qquad
  \blank \otimes \lDual A \from \ZCat{C}[R] \to \ZCat C.
\end{equation}
Before we prove that the adjunction \(\adj{\blank \otimes A}{\blank \otimes \lDual A}{\cat C}{\cat C}\), discussed in Proposition~\ref{prop: rigidity_yields_adjoints}, lifts to an adjunction of module categories, we fix our notation for the evaluation and coevaluation morphisms in the context of twisted centres.
For any object \((A, \sigma_{A, \blank}) \in \ZCat{C}[R]\), we write

\begin{equation}\label{eq: ev_and_coev_for_twisted_centres}
  \tikzsetnextfilename{ev_and_coev_ayd}%
  \input{sections/tikzfigures/ev_and_coev_ayd.tikz}%

\end{equation}

\begin{proposition}\label{prop: adjunction_twisted_centre}
  Every object \(A \in \ZCat{C}[R]\) induces adjoint \(\ZCat C\)-module functors
  \begin{equation}
    \adj{\blank \otimes A}{\blank \otimes \lDual A}{\ZCat C}{\ZCat{C}[R]}.
  \end{equation}
\end{proposition}
\begin{proof}
  We fix an object \((A, \sigma_{A,\blank}) \in \ZCat{C}[R]\).
  Considered as endofunctors of \(\cat C\), there is an adjunction \(\adj{\blank \otimes A}{\blank \otimes \lDual A}{\ZCat C}{\ZCat{C}[R]}\).
  As stated in the proof of Proposition~\ref{prop: rigidity_yields_adjoints},
  its unit and counit are implemented via the evaluation and coevaluation morphisms
  \begin{gather*}
    \eta_Y \defeq \id_Y \otimes \coev^l_A \from Y \to Y\otimes \lDual A \otimes A, \qquad \qquad \text{ for all } Y \in \ZCat C,\\
    \epsilon_X \defeq \id_X \otimes \ev^l_A \from X \otimes \lDual A \otimes A \to X, \qquad \qquad \text{ for all } X \in \ZCat C[R].
  \end{gather*}
  The next diagram shows that \(\epsilon_X\) is a morphism in \(\ZCat{C}[R]\) for every \(X \in \ZCat C[R]\).
  \begin{equation}\label{eq: ev_induces_morph_of_mod_functors}
  \tikzsetnextfilename{adjunction_module_functors}%
  \input{sections/tikzfigures/adjunction_module_functors.tikz}%

  \end{equation}
  Furthermore, \(\epsilon_{W\lact X} = \id_W \otimes \epsilon_X\) for all \(W \in \ZCat C\).
  A similar argument shows that the unit of the adjunction is a natural transformation of module functors as well.
\end{proof}

The forgetful functor from the (twisted) centre to its underlying category is \emph{conservative}, \ie it `reflects' isomorphisms.
This allows us to characterise equivalences of module categories between \(\ZCat C\) and right twisted centres.

\begin{proposition}\label{prop: module_funs_have_adjoints}
  Any functor of left module categories \(F\from \ZCat C \to \ZCat{C}[R]\) is naturally isomorphic to
  \begin{equation*}
    \blank \otimes A \from \ZCat C \to \ZCat{C}[R],
  \end{equation*}
  with \( A = F(1) \in \ZCat{C}[R]\).
  As a consequence, \(F\) is an equivalence if and only if \(A\) is invertible as an object of \(\cat C\).
\end{proposition}
\begin{proof}
  The first claim is an immediate consequence of the unitality of the action.
  Suppose that \(H \cong \blank \otimes A\) is an equivalence.
  By Proposition~\ref{prop: rigidity_yields_adjoints}, \(A\) must be invertible.
  If conversely \(A\) is invertible, the same result shows that \(\blank \otimes A\) is an equivalence of categories.
\end{proof}
Notice in particular that by the above result two functors  \(F, G\from \ZCat C \to \ZCat{C}[R]\) of left module categories are naturally isomorphic if and only if $F(1) \cong G(1)$.

\begin{definition}\label{def: C-invertible}
  An object $(\alpha, \sigma_{(\alpha, \blank)}) \in \ZCat{C}[R]$ in is called \emph{$\cat C$-invertible} if the image $U(\alpha) \in\cat{C}$ of $\alpha$ under the forgetful functor $U \from\ZCat{C}[R] \to\cat{C}$ is invertible.
\end{definition}

The notion of heaps allows us to define an algebraic structure on the isomorphism classes of objects implementing module equivalences between the Drinfeld centre \(\ZCat C\) and its twisted `relative' \(\ZCat{C}[R]\).
In analogy with the Picard group, we call this the \emph{Picard heap} of a twisted centre.

\begin{lemma}\label{lem: inv_twisted_form_heap}
  The \emph{Picard heap} of the right twisted centre \(\ZCat{C}[R]\) is the set of isomorphism classes
  \begin{equation}
    \Pic \ZCat{C}[R] \defeq
    \big\{
    [\alpha]
    \;\big|\;
    \alpha \in \ZCat{C}[R] \text{ is }\cat{C}\text{-invertible}
    \big\}
  \end{equation}
  together with the heap operation defined for \([\alpha], [B], [C] \in \Pic \ZCat{C}[R]\) by
  \begin{equation}
    \left \langle [\alpha], [\beta] , [\gamma] \right \rangle = [\alpha \otimes \lDual \beta \otimes \gamma] .
  \end{equation}
\end{lemma}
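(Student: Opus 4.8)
The plan is to check, in turn, that the stated ternary operation is internal to $\Pic \ZCat{C}[R]$ and well defined on isomorphism classes, and then that it satisfies the generalised associativity~\eqref{eq: heap_ass_constraint} and the \Malcev identities~\eqref{eq: heap_unit_constraint}. For closure, I would read the underlying object $\alpha \otimes \lDual{\beta} \otimes \gamma$ through the grouping $(\alpha \otimes \lDual{\beta}) \otimes \gamma$: Theorem~\ref{thm: left_dualising_as_a_module_fun} exhibits $\lDual{\beta}$ as an object of the left twisted centre $\ZCat[R]{C}$, so composing the half-braidings of $\alpha \in \ZCat{C}[R]$ and $\lDual{\beta} \in \ZCat[R]{C}$ as in Diagram~\eqref{eq: gluing_of_half_brainings} endows $\alpha \otimes \lDual{\beta}$ with an honest half-braiding, \ie makes it an object of $\ZCat C$; the left action of $\ZCat C$ on $\ZCat{C}[R]$ from Theorem~\ref{thm: Left_or_right_twisted_centre_is_mod_cat} then returns $(\alpha \otimes \lDual{\beta}) \otimes \gamma$ to $\ZCat{C}[R]$. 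Its underlying object is invertible in $\cat C$, because tensor products and left duals of invertible objects are invertible (the discussion following Definition~\ref{def: invertible_objects}). Finally, since $\otimes$, the $\ZCat C$-action and the left-dualising functor are all functorial on the relevant centres, isomorphic representatives of $\alpha, \beta, \gamma$ produce isomorphic results, so $\langle [\alpha], [\beta], [\gamma] \rangle$ depends only on the classes.

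For generalised associativity, both $\langle \langle [\alpha], [\beta], [\gamma] \rangle, [\delta], [\epsilon] \rangle$ and $\langle [\alpha], [\beta], \langle [\gamma], [\delta], [\epsilon] \rangle \rangle$ have underlying object $\alpha \otimes \lDual{\beta} \otimes \gamma \otimes \lDual{\delta} \otimes \epsilon$ by strictness of $\cat C$, and in each case the half-braiding is assembled from the same five elementary half-braidings glued in the same order. As this gluing is itself associative --- it is governed by the strictly associative monoidal structure of $\ZCat C$ and module structure of $\ZCat{C}[R]$, both inherited from the strict $\otimes$ of $\cat C$ --- the two objects literally coincide, and hence so do their classes.

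The \Malcev identities are where I expect the real work to sit, since this is the only place invertibility is genuinely used and where one must pass from an isomorphism in $\cat C$ to one in the centre. For $\langle [\alpha], [\alpha], [\beta] \rangle = [\alpha \otimes \lDual{\alpha} \otimes \beta]$, I would invoke Theorem~\ref{thm: adjunction_twisted_centre}: evaluated at the tensor unit, the unit of the module adjunction is $\coev^l_\alpha \from 1 \to \alpha \otimes \lDual{\alpha}$, a morphism in $\ZCat C$; acting with it on $\beta$ gives a morphism $\beta \to \alpha \otimes \lDual{\alpha} \otimes \beta$ in $\ZCat{C}[R]$. Because $\alpha$ is invertible, $\coev^l_\alpha$ is an isomorphism in $\cat C$, and since the forgetful functor is conservative it is an isomorphism in the centre; thus $\alpha \otimes \lDual{\alpha} \otimes \beta \cong \beta$ and $[\alpha \otimes \lDual{\alpha} \otimes \beta] = [\beta]$. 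Dually, for $\langle [\beta], [\alpha], [\alpha] \rangle = [\beta \otimes \lDual{\alpha} \otimes \alpha]$ I would take the counit $\id_\beta \otimes \ev^l_\alpha \from \beta \otimes \lDual{\alpha} \otimes \alpha \to \beta$, a morphism in $\ZCat{C}[R]$ by the same theorem, which is an isomorphism in $\cat C$ --- hence in $\ZCat{C}[R]$ by conservativity --- because $\ev^l_\alpha$ is. This yields $[\beta \otimes \lDual{\alpha} \otimes \alpha] = [\beta]$ and completes the verification that $\Pic \ZCat{C}[R]$ is a heap.
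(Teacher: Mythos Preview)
Your proposal is correct and follows essentially the same route as the paper. The paper is terser: it handles associativity in one line (associativity of $\otimes$ plus compatibility of the gluing of half-braidings) and for the \Malcev identities invokes Theorem~\ref{thm: rigdity_of_centre} together with Equation~\eqref{eq: ev_induces_morph_of_mod_functors} to conclude directly that $({\coev^l_\alpha})^{-1}\otimes\id_\beta$ and $\id_\beta\otimes\ev^l_\alpha$ are isomorphisms in $\ZCat{C}[R]$. Your framing via the unit and counit of the module adjunction of Theorem~\ref{thm: adjunction_twisted_centre}, together with conservativity of the forgetful functor, is the same argument made explicit; and your additional checks of closure and well-definedness on isomorphism classes are points the paper leaves implicit.
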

\begin{proof}
  The generalised associativity, see Equation~\eqref{eq: heap_ass_constraint}, follows from the associativity of the tensor product of \(\cat C\) and its compatibility with the `gluing' of half-braidings.
  To show that the \Malcev identities hold, we fix objects \(\alpha, \beta \in \ZCat{C}[R]\), which are invertible in \(\cat C\).
  Proposition~\ref{prop: rigdity_of_centre} and Equation~\eqref{eq: ev_induces_morph_of_mod_functors} imply that
  \begin{gather*}
    \alpha \otimes \lDual \alpha \otimes \beta \trightarrow{{\coev^l_\alpha}^{-1}\otimes {\id_\beta}} \beta
    \qquad
    \text { and }
    \qquad
    \beta \otimes \lDual \alpha \otimes \alpha \trightarrow{{\id_\beta} \otimes {\ev^l_\alpha}} \beta
  \end{gather*}
  are isomorphisms in \(\ZCat{C}[R]\) and therefore
  \(
  \left \langle [\alpha], [\alpha] , [\beta] \right \rangle
  = [\beta]
  = \left \langle [\beta], [\alpha] , [\alpha] \right \rangle.
  \)
\end{proof}

In general, the twisted centre \(\ZCat{C}[R]\) does not inherit a monoidal structure from \(\cat C\).
The above lemma, however, hints towards a slight generalisation where the tensor product is replaced by a trivalent functor, essentially categorifying heaps (without the \Malcev identities).
The well-definedness of this concept was hinted at in~\cite{Skoda2007} under the name of \emph{heapy categories}.

\subsection{Quasi-pivotality}\label{subsec: quasi-pivotality}

A particularly interesting consequence of our previous findings arises in case \(R=\lbiDual{(\blank)}\) is the left double dual functor.
The centre of the regular bimodule twisted on the right by \(\lbiDual{(\blank)}\) can be understood as a generalisation of anti-\YetterDrinfeld modules, see~\cite[Theorem~2.3]{Hassanzadeh2019}.

As before, we fix a strict rigid category \(\cat C\) and consider the twisted bimodules \({\cat C}_{\lbiDual{(\blank)}}\) and \({}_{\lbiDual{(\blank)}} {\cat C}\).

\begin{notation}\label{not: twisted_centres_with_charge}
  We denote by \(\ACat{C} \defeq \ZCat{C}[\lbiDual{(\blank)}]\) and \(\QCat C \defeq \ZCat[\lbiDual{(\blank)}]{C}\) the centre of the regular bimodule twisted by the double dual functor from the right and left, respectively.
  The former will also be called the \emph{anti-Drinfeld centre} of \(\cat C\).
\end{notation}

We have already mentioned the connection between the twisted centre \(\ACat C\) and anti-\YetterDrinfeld modules over Hopf algebras given in~\cite{Hassanzadeh2019}.
The case where \(\cat C\) is the category of modules over a Hopf algebroid was recently investigated by Kowalzig in~\cite{Kowalzig2020}.
The counterpart \(\QCat C\) of the generalised anti-\YetterDrinfeld modules is less common in the literature but plays a crucial role in our investigation, especially in Sections~\ref{sec: monads} and~\ref{sec: mon_perspective}, where we focus on the monadic point of view.

The next definition is a specific case of an unnamed construction studied in~\cite[Section 4]{Shimizu2016}.

\begin{definition}\label{def: quasi_pivotal}
  A \emph{quasi-pivotal structure} on a rigid category \(\cat C\) is a pair \((\beta, \rho_{\beta})\) comprising an invertible object \(\beta \in \cat C\) and a monoidal natural isomorphism
  \begin{equation}
    \rho_\beta \from \Id_{\cat C} \to \beta \otimes \lbiDual{(\blank)} \otimes \lDual \beta.
  \end{equation}
  We refer to \((\cat C, (\beta, \rho_\beta))\) as a \emph{quasi-pivotal} category.
\end{definition}

If \(\cat C\) is the category of finite-dimensional modules over a finite-dimensional Hopf algebra, quasi-pivotal structures have a well-known interpretation---they translate to pairs in involution.
This can be deduced from a slight variation of \cite[Lemma~5.6]{Halbig2019},
the main observation being that the invertible object \(\beta\) of a quasi-pivotal structure \( (\beta, \rho_\beta)\) on \(\cat C\) corresponds to a character, and \(\rho_{\beta}\) determines a group-like element.
The fact that \(\rho_\beta\) is a natural transformation from the identity to a conjugate of the double dual functor is captured by the character and group-like implementing the square of the antipode.
We study a monadic analogue of this statement in Section~\ref{subsec: monadic_perspective_piv_structures}.

\begin{remark}\label{rem: quasi_pivotal_vs_pivotal}
  Every pivotal category is quasi-pivotal; the converse does not hold.
  A counterexample are the finite-dimensional modules over the generalised Taft algebras discussed in~\cite{Halbig2018}.
  Any of these Hopf algebras admit pairs in involution but in general neither the character nor the group-like can be trivial.
  The previous discussion and Lemma \cite[Lemma~5.6]{Halbig2019} show that \(\rMod{H}\) is quasi-pivotal but not pivotal---in contrast to its Drinfeld centre \(\ZCat*{\rMod{H}}\), which admits a pivotal structure  by \cite[Lemma~5.5]{Halbig2019}.
\end{remark}

Let \((\beta, \rho_\beta)\) be a quasi-pivotal structure on \(\cat C\) and \(\phi \from \beta' \to \beta\) an isomorphism.
Clearly, the pair \((\beta', (\phi^{-1}\otimes \id \otimes \lDual \phi)\rho_\beta)\) is another quasi-pivotal structure on \(\cat C\).
This defines an equivalence relation and we write
\begin{equation*}
  \QPiv (\cat C) \defeq \big\{ [(\beta, \rho_\beta)] \mid (\beta, \rho_\beta) \text{ is a quasi-pivotal structure on } \cat C \big\}
\end{equation*}
for the set of equivalence classes of quasi-pivotal structures on \(\cat C\).

\begin{lemma}\label{lem: quasi_piv_vs_inv_ayds}
  Let \(\cat C\) be a strict rigid category.
  The	Picard heap \(\Pic \ACat C\) and the set of equivalence classes of quasi-pivotal structures \(\QPiv (\cat C)\) are in bijection.
\end{lemma}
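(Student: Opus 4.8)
The plan is to construct a bijection between $\Pic \ACat C$ and $\QPiv(\cat C)$ by directly matching the data on both sides. An element of $\Pic \ACat C$ is an isomorphism class $[(\alpha, \sigma_{\alpha, \blank})]$, where $\alpha \in \cat C$ is invertible and $\sigma_{\alpha, \blank}$ is a half-braiding for the bimodule ${\cat C}_{\lbiDual{(\blank)}}$; by Remark~\ref{rem: other_variants_of_twisted_centres} this is a natural isomorphism $\sigma_{\alpha, \blank} \from \alpha \otimes \lbiDual{(\blank)} \to \blank \otimes \alpha$ satisfying the twisted hexagon axiom. An element of $\QPiv(\cat C)$ is a class $[(\beta, \rho_\beta)]$ with $\beta$ invertible and $\rho_\beta \from \Id_{\cat C} \to \beta \otimes \lbiDual{(\blank)} \otimes \lDual\beta$ a monoidal natural isomorphism. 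The key observation is that both structures encode `the same' natural transformation once we use the invertibility of $\alpha$ (resp.\ $\beta$) to transpose the trailing tensor factor across the duality, since $\lDual\beta$ is the inverse of $\beta$ in $\Pic \cat C$ by Definition~\ref{def: invertible_object}.

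First I would set $\beta \defeq \alpha$ and define $\rho_\beta$ from $\sigma_{\alpha, \blank}$ by tensoring on the right with $\lDual\alpha$ and composing with the (iso)coevaluation: explicitly, for $X \in \cat C$,
\begin{equation*}
	\rho_X \defeq (\sigma_{\alpha, X} \otimes \id_{\lDual\alpha})(\id_\alpha \otimes \id_{\lbiDual X} \otimes \coev^l_\alpha) \; \circ \; (\text{unit isos}),
\end{equation*}
read off from the appropriate string diagram. Because $\alpha$ is invertible, $\coev^l_\alpha$ is an isomorphism, so $\rho_X$ is an isomorphism whenever $\sigma_{\alpha, X}$ is; naturality in $X$ is inherited from naturality of $\sigma_{\alpha, \blank}$. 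The crux is to verify that the hexagon axiom for the half-braiding $\sigma_{\alpha, \blank}$ is equivalent to the statement that $\rho_\beta$ is a \emph{monoidal} natural transformation in the sense of Definition~\ref{def: mon_nat_trafo}; this is the computation underlying the whole correspondence and is where the coherence morphisms of the functor $\beta \otimes \lbiDual{(\blank)} \otimes \lDual\beta$ (coming from compatibility of strong monoidal functors with duals, Equation~\eqref{eq: monoidal_functors_compatible_with_rigid_structure}) must be handled carefully. The inverse assignment transposes $\lDual\alpha$ back using $\ev^l_\alpha$, and one checks the two constructions are mutually inverse by the snake identities~\eqref{eq: snake_identities}.

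Finally I would check that the construction descends to equivalence classes. On the $\Pic \ACat C$ side, an isomorphism in $\ACat C$ between $(\alpha, \sigma_{\alpha,\blank})$ and $(\alpha', \sigma_{\alpha',\blank})$ is, by Definition~\ref{def: centre_bimodue_category}, a morphism $\phi \from \alpha' \to \alpha$ in $\cat C$ intertwining the half-braidings; on the $\QPiv(\cat C)$ side the equivalence relation is precisely conjugation of $\rho_\beta$ by $(\phi^{-1} \otimes \id \otimes \lDual\phi)$. These two notions of equivalence are designed to correspond under the transposition above, so the bijection is well defined on classes and remains injective and surjective.

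\textbf{Main obstacle.}
The hard part will be the hexagon-versus-monoidality verification: translating the twisted hexagon axiom~\eqref{eq: hexagon_axiom_half_braiding} for $\sigma_{\alpha, \blank}$ into the condition $\Delta^{(G)}_{X,Y}\rho_{X\otimes Y} = (\rho_X \otimes \rho_Y)\Delta^{(F)}_{X,Y}$ requires threading the coevaluation of $\alpha$ through both sides and using the explicit comultiplication of the functor $\beta \otimes \lbiDual{(\blank)} \otimes \lDual\beta$, whose coherence data is nontrivial because it involves the monoidal structure of the bidualising functor. Keeping track of where the $\coev^l_\alpha$ and $\ev^l_\alpha$ cancel via the snake identities—rather than getting lost in a forest of associators, which are suppressed only because $\cat C$ is strict rigid—is the delicate bookkeeping that makes this lemma more than a formality.
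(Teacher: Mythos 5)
Your proposal is essentially the paper's own proof: both directions of the bijection are obtained by transposing the invertible object across the duality using $\coev^l_\alpha$ and $\ev^l_\alpha$, the hexagon axiom for the half-braiding is matched against monoidality of $\rho_\beta$, the snake identities show the two assignments are mutually inverse, and the equivalence relation on $\QPiv(\cat C)$ corresponds to isomorphism in $\ACat C$. One small slip that does not affect the strategy: as written your formula for $\rho_X$ does not typecheck (its source is $\alpha \otimes \lbiDual{X}$ rather than $X$), and since $\sigma_{\alpha,X} \from \alpha \otimes \lbiDual{X} \to X \otimes \alpha$ you need the \emph{inverse} half-braiding, i.e.\ $\rho_X = (\sigma_{\alpha,X}^{-1} \otimes \id_{\lDual\alpha})(\id_X \otimes \coev^l_\alpha)$.
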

\begin{proof}
  Let \((\beta, \rho_\beta)\) be a quasi-pivotal structure on \(\cat C\).
  We define the half-braiding
  \begin{equation}
  \tikzsetnextfilename{qpiv_iso}%
  \input{sections/tikzfigures/qpiv_iso.tikz}%

  \end{equation}
  It satisfying the hexagon identity is due to \(\rho_\beta\) being monoidal.
  This establishes a map \(\phi \from \QPiv (\cat C) \to \Pic \ACat C\), \([(\beta, \rho_\beta)] \mapsto [(\beta, \sigma_{\beta, \blank})]\).

  Conversely, let \((\alpha, \sigma_{\alpha,\blank}) \in \ACat C\) be $\cat{C}$-invertible.
  From its half-braiding we obtain a monoidal natural transformation
  \begin{equation}\label{eq: quasi_pivot_from_braiding}
  \tikzsetnextfilename{quasi_pivot_from_braiding}%
  \input{sections/tikzfigures/quasi_pivot_from_braiding.tikz}%

  \end{equation}
  Due to the snake identities, the map \(\psi \from \Pic \ACat C \to \QPiv (\cat C)\), \([(\alpha, \sigma_{\alpha, \blank})] \mapsto [(\alpha, \rho_\alpha)]\) is the inverse of \(\phi\).
\end{proof}

\begin{remark}\label{rem: qpiv_heap_without_canonical_unit}
  Since \(\QPiv (\cat C)\) and \(\Pic \ACat C\) are in bijection, \(\QPiv (\cat C)\) can be endowed with a heap structure.
  However, even if \(\QPiv (\cat C)\) is non-empty, there might not be a canonical element establishing a group structure on it in the sense of Lemma~\ref{lem: heaps_to_groups}.
  This conforms to the fact that there are no canonical coefficients for Hopf cyclic cohomology as mentioned in the introduction.
\end{remark}

Having lifted all Hopf algebraic notions of Theorem~\ref{thm: equiv_in_Hopf_setting},
we can now restate it in its categorical version.
Its proof is an immediate consequence of Proposition~\ref{prop: module_funs_have_adjoints} and Lemma~\ref{lem: quasi_piv_vs_inv_ayds}.

\begin{theorem}\label{thm: equivalence_in_the_rigid_setting}
  Let \(\cat C\) be a strict rigid category. The following sets are in bijection:
  \begin{thmlist}
  \item the equivalence classes of quasi-pivotal structures $\QPiv (\cat C)$ on $\cat{C}$,
  \item the Picard heap \(\Pic \ACat C\) of $\ACat C$, and
  \item the isomorphism classes $\textnormal{ModEquiv}(\ZCat{C}, \ACat{C})/\!\!\sim$   of equivalences of module categories between $\ZCat{C}$ and $\ACat{C}$.
  \end{thmlist}
\end{theorem}

\subsection{Pivotality of the Drinfeld centre}\label{subsec: piv_of_centre}

In Remark~\ref{rem: quasi_pivotal_vs_pivotal} it is noted that pairs in involution give rise to pivotal structures on the \YetterDrinfeld modules.
This relationship follows a categorical principle, which we will examine in this section.
Our approach is similar to Shimizu's investigations in the setting of finite tensor categories, see~\cite{Shimizu2016}.
A major difference being that we focus on the Picard heap of the anti-Drinfeld centre instead of quasi-pivotal structures of the underlying category.

Let us briefly sketch the main benefit of this approach.
Our ensuing constructions lead to a conceptual understanding of the connection between the elements of \(\Pic \ACat C\) and pivotal structures on \(\ZCat C\).
This in turn allows us to determine when two such induced structures coincide by studying actions of the Picard group of the symmetric centre of \(\cat C\) on \(\Piv \ACat C\).
Ultimately, this leads to a heap morphism between the Picard heap of the anti-Drinfeld centre of \(\cat C\) and the pivotal structures on \(\ZCat C\).

Let \(A= (\alpha, \sigma_{\alpha, \blank}) \in \ACat C\) be $\cat{C}$-invertible and write \(\Omega = (\omega, \sigma_{\omega, \blank}) \in \QCat C\) for its left dual.
The coevaluation of \(\alpha\) will play an important role, which is why we gather some of its properties in the next diagram.
Set \(\coev^{-l}_{\alpha} \defeq (\coev^l_{\alpha})^{-1}\).
\begin{equation}\label{eq: unit_braiding_coev}
  \tikzsetnextfilename{coev_and_braiding}%
  \input{sections/tikzfigures/coev_and_braiding.tikz}%

\end{equation}

Appropriate half-braidings allow us to `entwine' \(A\) with any object \(X \in \ZCat C\) in a non-trivial manner, resulting in a morphism from \(X\) to its double dual:
\begin{equation}\label{eq: def_of_nat_iso}
  \tikzsetnextfilename{def_of_nat_iso}%
  \input{sections/tikzfigures/def_of_nat_iso.tikz}%

\end{equation}

As mentioned in Remark~\ref{rmk:twisted-centre-as-loop}, objects in twisted centres correspond to pseudonatural transformations between deloopings of monoidal functors.
The following lemma, which is also discussed in~\cite[Section 4.4]{Shimizu2016} shows that if these objects are `invertible', one can reconstruct monoidal natural isomorphisms from them.
 For the convenience of the reader we provide a proof in terms of string diagrams in Section~\ref{sec:proof:pivotal_structure_from_entwinement}.

\begin{lemma}\label{lem: pivotal_structure_from_entwinement}
  Any $\cat{C}$-invertible object \(A  \in \ACat C\)  defines a pivotal structure on \(\ZCat C\) via
  \begin{equation*}
    X \trightarrow{\rho_{A, X}} \lbiDual{X}, \qquad \qquad \text{ for all } X \in \ZCat C.
  \end{equation*}
\end{lemma}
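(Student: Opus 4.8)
The plan is to show that the assignment $X \mapsto \rho_{A,X}$ defined in Diagram~\eqref{eq: def_of_nat_iso} is a \emph{monoidal natural isomorphism} $\rho_A \from \Id_{\ZCat C} \to \lbiDual{(\blank)}$, so that it constitutes a pivotal structure in the sense of Definition~\ref{def: pivotal_category}. There are three things to verify: that each $\rho_{A,X}$ is a morphism in $\ZCat C$ (that is, it commutes with the half-braidings), that $\rho_A$ is natural in $X$, and that $\rho_A$ is compatible with the tensor product and unit of $\ZCat C$. Invertibility of each component is essentially free: since $\alpha$ is invertible in $\cat C$, its coevaluation and evaluation are isomorphisms by Definition~\ref{def: invertible_objects}, and the half-braidings $\sigma_{\alpha,\blank}$ are isomorphisms by definition, so the composite defining $\rho_{A,X}$ is built from isomorphisms; the forgetful functor $U^{(Z)}$ being conservative then upgrades this to invertibility in $\ZCat C$ once we know $\rho_{A,X}$ lives there.

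First I would establish that $\rho_{A,X}$ is a morphism in the Drinfeld centre. By Theorem~\ref{thm: rigdity_of_centre}, the bidual $\lbiDual X$ carries the half-braiding $(\sigma_X)^2$ obtained by iterating the duality construction, so the requirement is a string-diagrammatic identity relating $\sigma_X$, the half-braiding $\sigma_{\alpha,\blank}$ of $A$, and the (co)evaluations of $\alpha$. This is the step I expect to be the main obstacle: it amounts to sliding the central string of $X$ past the $\alpha$- and $\lbiDual{(\blank)}$-strands and is where the hexagon axiom~\eqref{eq: hexagon_axiom_half_braiding} for $\sigma_{\alpha,\blank}$, the compatibility of $\sigma_X$ with duals from Theorem~\ref{thm: rigdity_of_centre}, and the properties of $\coev^l_\alpha$ collected in Diagram~\eqref{eq: unit_braiding_coev} all get used in concert. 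The bookkeeping of the half-braidings — remembering that $A$ lives in the right-twisted centre $\ACat C$, so its half-braiding has the twisted source/target $A \ract X = A \otimes \lbiDual X$ — is the delicate point.

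Next I would check naturality: for $f \from X \to Y$ in $\ZCat C$ the square $\lbiDual f \circ \rho_{A,X} = \rho_{A,Y} \circ f$ should commute. This reduces to naturality of $\sigma_{\alpha,\blank}$ in its argument together with functoriality of $\lbiDual{(\blank)}$, and is routine once the diagram is drawn. Finally I would verify the monoidal coherence: writing out $\rho_{A, X \otimes Y}$ using the tensor product half-braiding $\sigma_{X \otimes Y, \blank}$ from Example~\ref{ex: Drinfeld_centre} and comparing with $(\rho_{A,X} \otimes \rho_{A,Y})$ suitably composed, one sees the two agree because the hexagon axiom for $\sigma_{\alpha,\blank}$ lets the single $\alpha$-entwinement split across the two tensor factors, and the snake identities~\eqref{eq: snake_identities} for $\alpha$ reconcile the extra $\alpha \otimes \lDual\alpha$ pairs that appear. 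The unit condition $\rho_{A,1} = \id$ follows from $\sigma_{\alpha,1} = \id_\alpha$ (the triviality of half-braidings with the unit, as noted after Definition~\ref{def: half_braiding}) and the snake identity for $\alpha$. Assembling these three verifications gives that $\rho_A$ is a pivotal structure on $\ZCat C$, completing the proof.
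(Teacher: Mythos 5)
Your proposal follows essentially the same route as the paper's proof: establish that $\rho_{A,X}$ commutes with the half-braidings using a Yang--Baxter-type manipulation together with the identities of Diagram~\eqref{eq: unit_braiding_coev}, obtain invertibility from conservativity of the forgetful functor, deduce naturality from naturality of the half-braidings, and derive monoidality from the hexagon identities. The only difference is that you leave the central string-diagram computation as a described plan rather than carrying it out, but the ingredients you identify are exactly those the paper uses.
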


Given different pivotal structures of \(\ZCat C\), induced by $\cat{C}$-invertible objects in \(\ACat C\), it is a priori challenging to determine whether these structures coincide.
The following lemma is a first step in this direction.
It shows that the induced pivotal structures only depend on the isomorphism classes of $\cat{C}$-invertible objects in \(\ACat C\).

\begin{lemma}\label{lem: pivotal_morphisms_depend_only_on_iso_class}
  Suppose that \(A_1, A_2 \in \ACat C\) are two representatives of the equivalence class \([A_1] = [A_2] \in \Pic \ACat C\).
  Then \(\rho_{A_1} = \rho_{A_2}\).
\end{lemma}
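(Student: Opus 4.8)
The plan is to show that if $[A_1] = [A_2]$ in $\Pic \ACat C$, then the two induced pivotal structures $\rho_{A_1}$ and $\rho_{A_2}$ on $\ZCat C$ agree. By definition of the equivalence relation on the Picard heap, $[A_1] = [A_2]$ means there is an isomorphism $f \from A_1 \to A_2$ in $\ACat C$ whose underlying morphism in $\cat C$ is an isomorphism $f \from \alpha_1 \to \alpha_2$ between the respective invertible objects. Since $f$ is a morphism in $\ACat C$, it commutes with the half-braidings in the sense of Equation~\eqref{eq: def_rel_morphism_in_centre}, and since the forgetful functor is faithful, the entire argument can be carried out in the graphical calculus of $\cat C$ while keeping track of this compatibility.

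First I would write out both morphisms $\rho_{A_1, X}$ and $\rho_{A_2, X}$ from $X$ to $\lbiDual X$ as in Diagram~\eqref{eq: def_of_nat_iso}, for a fixed $X \in \ZCat C$. Each is built from the coevaluation of the respective $\alpha_i$, the half-braidings $\sigma_{\alpha_i, \blank}$ and $\sigma_{\omega_i, \blank}$, and the evaluation data. The key step is to insert the isomorphism $f$ together with its inverse into the string diagram for $\rho_{A_1, X}$: because $f \from \alpha_1 \to \alpha_2$ is invertible, I can replace each occurrence of $\alpha_1$ by $\alpha_2$ at the cost of an $f$ and an $f^{-1}$, and correspondingly replace the duals $\omega_1 = \lDual{\alpha_1}$ by $\omega_2 = \lDual{\alpha_2}$ using $\lDual f$. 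The compatibility of $f$ with the half-braidings — the defining relation of a morphism in the anti-Drinfeld centre, together with the induced compatibility of $\lDual f$ as a morphism in $\QCat C$ coming from Theorem~\ref{thm: left_dualising_as_a_module_fun} — lets me slide these $f$'s and $f^{-1}$'s past the braidings. After doing so the inserted $f$ and $f^{-1}$ cancel against the coevaluation and evaluation via the snake identities \eqref{eq: snake_identities}, leaving exactly the diagram for $\rho_{A_2, X}$.

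More precisely, I expect the cleanest bookkeeping to use that $f$ being a morphism in $\ACat C$ gives $(\lbiDual f \otimes \id)\,\sigma_{\alpha_1, X} = \sigma_{\alpha_2, X}\,(f \otimes \id)$ for the right-twisted half-braiding, and dually for $\omega_i$ in $\QCat C$. Chaining these two intertwining relations through the composite defining $\rho_{A,X}$ is what transports $f$ from the coevaluation end to the evaluation end, where it is annihilated. Concluding equality of the \emph{structures} (not merely componentwise) then follows because this holds naturally in $X$, so $\rho_{A_1} = \rho_{A_2}$ as natural transformations $\Id_{\ZCat C} \to \lbiDual{(\blank)}$.

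The main obstacle will be handling the duality carefully: the coevaluation and half-braiding of $\alpha_i$ live on the right-twisted side $\ACat C$, while $\omega_i = \lDual{\alpha_i}$ and its half-braiding live on the left-twisted side $\QCat C$, and I must confirm that the isomorphism $f$ and its dual $\lDual f$ really are compatible morphisms on both sides simultaneously. This is exactly what the lifting of the dualising functor in Theorem~\ref{thm: left_dualising_as_a_module_fun} guarantees, so invoking it should dispatch the difficulty, but the diagrammatic manipulation of sliding $f$ through the various crossings — and verifying the snake identities cancel the leftover $f f^{-1}$ and $\lDual f\,(\lDual f)^{-1}$ pairs — is where the genuine (if routine) work lies.
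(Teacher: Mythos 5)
Your proposal is correct and is essentially the paper's own argument: the paper's proof consists of a single string-diagram computation that inserts the isomorphism $\phi \from A_1 \to A_2$ and its inverse into the diagram for $\rho_{A_1, X}$, slides $\phi$ through the half-braidings using the defining relation \eqref{eq: def_rel_morphism_in_centre} (and the corresponding compatibility of $\lDual{\phi}$ in $\QCat C$), and cancels the leftover pair against the (co)evaluations via the snake identities, exactly as you describe. The only slip is notational: since the bidual twist sits in the $X$-slot of the right action, the intertwining relation for $f$ in $\ACat C$ reads $(\id_X \otimes f)\,\sigma_{\alpha_1, X} = \sigma_{\alpha_2, X}\,(f \otimes \id_{\lbiDual{X}})$, with $f$ itself (not $\lbiDual{f}$) appearing on the target side.
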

\begin{proof}
  We fix two $\cat{C}$-invertible objects \(A_{1,2}= (\alpha_{1,2}, \sigma_{\alpha_{1,2}, \blank}) \in \ACat C\)  such that there exists an isomorphism  \(\phi \from A_1 \to A_2\) in the anti-Drinfeld centre.
  For any \(X \in \ZCat C\) we have:
  \begin{center}
  \tikzsetnextfilename{pivotality_morphisms_depend_on_iso_class}%
  \input{sections/tikzfigures/pivotality_morphisms_depend_on_iso_class.tikz}%

  \end{center}
  This shows that the induced pivotal structures \(\rho_{A_1}\) and \(\rho_{A_2}\) are the same.
\end{proof}

We will now investigate a class of elements in $\ZCat{C}$ which act on $\Pic(A)$ without altering the induced pivotal structure.

\begin{definition}\label{def: symmetric_object}
  We call an object \(X \in \ZCat C\) \emph{symmetric} if we have
  \begin{equation}\label{eq: symmetric_object}
    \sigma_{X,Y}^{-1} = \sigma_{Y,X}^{\vphantom{-1}}, \qquad \qquad \text{ for all } Y \in \ZCat C.
  \end{equation}
\end{definition}

Following the terminology of~\cite{Mueger2013}, we call the full (symmetric) monoidal subcategory \(\SZCat C\) of \(\ZCat C\) whose objects are symmetric the \emph{symmetric centre} of \(\ZCat C\).

\begin{lemma}\label{lem: symmetric_centre_and_rigidity}
  Suppose \(\cat C\) to be rigid, then \(\SZCat C\) is rigid as well.
\end{lemma}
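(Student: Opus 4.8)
The goal is to show that the symmetric centre $\SZCat C$ is rigid, assuming $\cat C$ is rigid. I know from Theorem~\ref{thm: rigdity_of_centre} that the Drinfeld centre $\ZCat C$ is already rigid, inheriting the duality of $\cat C$. Since $\SZCat C$ is by definition a full monoidal subcategory of $\ZCat C$, it suffices to prove that $\SZCat C$ is closed under taking duals; the evaluation and coevaluation morphisms of $\ZCat C$ then serve verbatim as those for $\SZCat C$, and the snake identities are inherited. So the whole problem reduces to a single closure condition.

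The plan is therefore as follows. First I would fix a symmetric object $(X, \sigma_{X,\blank}) \in \SZCat C$ and consider its left dual $\lDual X \in \ZCat C$, whose half-braiding $\sigma_{\lDual X, \blank}$ is the one constructed in the proof of Theorem~\ref{thm: rigdity_of_centre} (see Equation~\eqref{eq: braiding_of_left_dual_centre}). I then need to verify that $\lDual X$ is again symmetric, i.e.\ that $\sigma_{\lDual X, Y}^{-1} = \sigma_{Y, \lDual X}$ for every $Y \in \ZCat C$. The key input is Equation~\eqref{eq: comp_braiding_n_duals}, which tells me that $\sigma_{\lDual X, \lDual Y} = \lDual{(\sigma_{X,Y})}$, i.e.\ that taking the braiding commutes with dualisation. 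Combining this with the symmetry hypothesis $\sigma_{X,Y}^{-1} = \sigma_{Y,X}$ and the contravariant functoriality of $\lDual{(\blank)}$ (which turns composites into reversed composites and inverses into inverses), I expect the symmetry of $\lDual X$ to fall out by a short diagram chase. I would run the analogous argument for the right dual $\rDual X$.

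Concretely, the core computation should read something like: for $Y \in \ZCat C$, write $Y = \lDual{(\rDual Y)}$, apply Equation~\eqref{eq: comp_braiding_n_duals} to express $\sigma_{\lDual X, Y}$ in terms of $\lDual{(\sigma_{X, \rDual Y})}$, then use that $X$ is symmetric to replace $\sigma_{X, \rDual Y}^{-1}$ by $\sigma_{\rDual Y, X}$, and finally reassemble to identify $\sigma_{\lDual X, Y}^{-1}$ with $\sigma_{Y, \lDual X}$. The main obstacle I anticipate is purely bookkeeping: keeping the left/right dualisation indices and the $\op$/$\tensorop$ conventions straight, since Theorem~\ref{thm: rigdity_of_centre} phrases the compatibility via the iterated-dual notation $(X)^n$ rather than pointwise, and I must be careful that the half-braiding assigned to $\lDual X$ in $\ZCat C$ is genuinely the one covered by Equation~\eqref{eq: comp_braiding_n_duals}. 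Once that identification is secured, the symmetry condition is essentially a formal consequence of $\sigma$ commuting with duals plus the definition of a symmetric object.

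Finally, having shown $\SZCat C$ is closed under both left and right duals, I would remark that the evaluations, coevaluations and snake identities are simply those inherited from $\ZCat C$ via Theorem~\ref{thm: rigdity_of_centre}, and that the morphisms involved automatically lie in $\SZCat C$ because they are morphisms in $\ZCat C$ between symmetric objects. This establishes that $\SZCat C$ is rigid, completing the proof. I expect the argument to be short, with the bulk of the work being the verification of the closure under duality described above.
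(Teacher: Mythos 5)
Your proposal is correct and follows essentially the same route as the paper: reduce rigidity of $\SZCat C$ to closure under duals inside the already-rigid $\ZCat C$, and verify $\sigma_{\lDual X, Y}^{-1} = \sigma_{Y, \lDual X}$ for symmetric $X$. Where the paper carries out this verification by an explicit string-diagram computation with the dual half-braiding of Equation~\eqref{eq: braiding_of_left_dual_centre}, you instead invoke Equation~\eqref{eq: comp_braiding_n_duals} together with contravariant functoriality of $\lDual{(\blank)}$ — a legitimate shortcut, since that equation packages exactly the diagram chase the paper performs.
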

\begin{proof}
  Suppose \(X\in \ZCat C\) to be symmetric and let \(Y \in \ZCat C\).
  We compute
  \begin{center}
  \tikzsetnextfilename{inverse_of_symmetric_object_is_symmetric}%
  \input{sections/tikzfigures/inverse_of_symmetric_object_is_symmetric.tikz}%

  \end{center}
  This implies \(\sigma_{\lDual X,Y}^{-1} = \sigma_{Y,\lDual X}^{\vphantom{-1}}\).
  Since the left dual of any \(X \in \SZCat C \subseteq \ZCat C\)  can be equipped with the structure of a right dual and \(\SZCat C\) is a full subcategory of \(\ZCat C\), it must be rigid.
\end{proof}

Let us now consider the Picard group \(\Pic \SZCat C\) of the symmetric centre of \(\ZCat C\).
It acts on \(\Pic \ACat{C}\) via tensoring from the left, as shown in Diagram~\eqref{eq: gluing_of_half_brainings}.
We consider two elements \(A, C \in \Pic \ACat{C}\) equivalent if they are contained in the same orbit.
That is
\begin{equation}\label{eq: equiv_action_Pic'_of_Z}
  [A] \sim [C] \iff \text{ there exists a } [B] \in\Pic \SZCat C \text{ such that }
  [B \otimes A] = [C].
\end{equation}

To show that two elements of \(\Pic \ACat C\) induce the same pivotal structure on \(\ZCat C\) if and only if they are contained in the same orbit under the  \(\Pic \SZCat C\)-action, we need two technical observations.
First, an alternate description of symmetric invertible objects.
Second, a more detailed investigation into the inverse of an induced pivotal structure.

\begin{lemma}\label{lem: alternative_char_of_sym_objs}
  An invertible object \((\beta, \sigma_{\beta, \blank}) \in \ZCat C\), is symmetric if and only if it satisfies for all \(X \in \ZCat C\)
  \begin{equation}\label{eq: alternative_char_of_sym_objs}
  \tikzsetnextfilename{twisted_trace_of_beta}%
  \input{sections/tikzfigures/twisted_trace_of_beta.tikz}%

  \end{equation}
\end{lemma}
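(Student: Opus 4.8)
Lemma~\ref{lem: alternative_char_of_sym_objs} claims that an invertible object $(\beta, \sigma_{\beta, \blank}) \in \ZCat C$ is symmetric, \ie satisfies $\sigma_{\beta, X}^{-1} = \sigma_{X, \beta}$ for all $X \in \ZCat C$, if and only if it obeys the graphical identity of Equation~\eqref{eq: alternative_char_of_sym_objs}. The right-hand condition is visibly a statement about a `twisted trace' of $\beta$: it encodes the half-braiding $\sigma_{\beta, X}$ conjugated by the evaluation and coevaluation of the invertible object $\beta$, compared against the half-braiding $\sigma_{X, \beta}$ read in the opposite order.

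**Plan of attack.** The natural approach is to exploit that $\beta$ is invertible, so that both $\coev^l_\beta$ and $\ev^l_\beta$ are isomorphisms in $\cat C$, and that by Theorem~\ref{thm: rigdity_of_centre} the left dual $\lDual\beta$ carries a canonical half-braiding with $\sigma_{\lDual\beta, X} = \lDual{(\sigma_{\beta, X})}$ compatible with the duality. The plan is to take the defining symmetry condition $\sigma_{\beta, X} \sigma_{X, \beta} = \id$ and transport it across the evaluation/coevaluation of $\beta$. Concretely, I would first rewrite the displayed diagram of Equation~\eqref{eq: alternative_char_of_sym_objs} by bending the $\beta$-strands using the snake identities \eqref{eq: left_snake_identity} and \eqref{eq: right_snake_identity}, so that the twisted trace on one side is manifestly the composite of $\sigma_{\beta, X}$ with the two structure maps. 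Because $\coev^l_\beta$ and $\ev^l_\beta$ are isomorphisms, composing with them (and their inverses) is a reversible operation; hence the diagrammatic identity \eqref{eq: alternative_char_of_sym_objs} holds precisely when the `untwisted' core identity $\sigma_{\beta, X}^{-1} = \sigma_{X, \beta}$ holds.

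**Carrying out the two directions.** For the forward direction, assume $\beta$ symmetric. I would insert $\sigma_{\beta, X}^{-1} = \sigma_{X, \beta}$ into the left-hand diagram and then slide the resulting half-braiding past the coevaluation $\coev^l_\beta$ using the naturality of the half-braiding together with the fact (Theorem~\ref{thm: rigdity_of_centre}) that the braiding on $\beta$ with the unit is trivial, $\sigma_{\beta, 1} = \id_\beta$. This reduces the left-hand side, after applying a snake identity, to the right-hand side of \eqref{eq: alternative_char_of_sym_objs}. For the converse, I would run the same string manipulations backwards: starting from the assumed identity \eqref{eq: alternative_char_of_sym_objs}, precompose and postcompose with $(\coev^l_\beta)^{-1}$ and $(\ev^l_\beta)^{-1}$ --- legitimate since $\beta$ is invertible --- to strip away the twisting and recover $\sigma_{\beta, X}^{-1} = \sigma_{X, \beta}$, which is exactly symmetry. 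The conservativity of the forgetful functor $U^{(Z)} \from \ZCat C \to \cat C$, already used in Lemma~\ref{lem: pivotal_structure_from_entwinement}, guarantees that it suffices to check these equalities in $\cat C$.

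**Main obstacle.** The genuine content is purely diagrammatic bookkeeping: the two displayed morphisms differ by conjugation with the (iso)evaluation and coevaluation of $\beta$, and the crux is verifying that the half-braidings can be slid past these cups and caps without introducing correction terms. The hexagon axiom \eqref{eq: hexagon_axiom_half_braiding} is what licenses pulling a half-braiding through a tensor factor, and combined with the snake identities it should let the strands be straightened cleanly. I expect the delicate point to be tracking the \emph{direction} of the crossing --- ensuring that the inverse half-braiding $\sigma_{\beta, X}^{-1}$ (rather than $\sigma_{X, \beta}^{-1}$, say) is the one produced when the $\beta$-strand is bent around --- since this is precisely where the symmetry hypothesis enters and where a sign-like error in the graphical calculus would go undetected. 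Beyond that, the equivalence is a formal consequence of the invertibility of the two structure morphisms of $\beta$.
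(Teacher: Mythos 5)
Your proposal takes essentially the same route as the paper: the paper likewise rewrites the twisted-trace diagram as the composite $(\sigma_{\beta, X} \sigma_{X, \beta})^{-1} \otimes \id_{\lDual \beta}$ conjugated by the isomorphism $f = \id_X \otimes \coev_\beta^l$, and then strips away $f$ and the tensor factor $\lDual \beta$ using the invertibility of $\beta$, which is exactly your ``conjugation by the iso coevaluation is reversible'' mechanism. The only cosmetic difference is that the paper treats both implications at once through this single chain of equivalences rather than running the two directions separately.
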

\begin{proof}
  Let \(B=(\beta, \sigma_{\beta, \blank}) \in \ZCat C\) be invertible and \(X \in \ZCat C\).
  The left-hand side of Equation~\eqref{eq: alternative_char_of_sym_objs} can be rephrased as:
  \begin{equation}\label{eq: rephrase_of_sym_obj_entanglement}
  \tikzsetnextfilename{alternate_char_of_sym_obj}%
  \input{sections/tikzfigures/alternate_char_of_sym_obj.tikz}%

  \end{equation}
  We define the morphism \(f \defeq \id_X \otimes \coev_\beta^l \from X \to X \otimes \beta \otimes \lDual \beta\) and observe that
  Equation~\eqref{eq: alternative_char_of_sym_objs} is identical to
  \begin{equation*}
    f^{-1} ((\sigma_{\beta, X} \sigma_{X, \beta})^{-1} \otimes \id_{\lDual \beta}) f = \id_X.
  \end{equation*}
  This is equivalent to \(\sigma_{\beta, X} \sigma_{X, \beta} \otimes \id_{\lDual \beta} = \id_{X \otimes \beta} \otimes \id_{\lDual \beta}\).
  As the functor \(\blank \otimes \lDual \beta\) is conservative, the claim follows.
\end{proof}

\begin{lemma}\label{lem: inverse_of_ind_piv_struc}
  Let \(A=(\alpha, \sigma_{\alpha,\blank}) \in \ACat C\) be  $\cat{C}$-invertible  and write \(\Omega =(\omega, \sigma_{\omega, \blank }) \in \QCat C\) for its dual.
  For any \(X \in \ZCat C\), the inverse of \(\rho_{A, X}\) is
  \begin{equation}
  \tikzsetnextfilename{inverse_of_pivotality_morphism}%
  \input{sections/tikzfigures/inverse_of_pivotality_morphism.tikz}%

  \end{equation}
\end{lemma}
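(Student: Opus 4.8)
The plan is to verify directly that the displayed morphism is a two-sided inverse of $\rho_{A,X}$. Lemma~\ref{lem: pivotal_structure_from_entwinement} already guarantees that $\rho_{A,X}$ is an isomorphism in $\ZCat C$, so in principle one only needs to produce a one-sided inverse; moreover, since the forgetful functor $U^{(Z)}\from \ZCat C \to \cat C$ is faithful and conservative, it suffices to carry out the verification after forgetting the half-braidings, working entirely in $\cat C$ with the graphical calculus. I would therefore reduce the claim to a single diagrammatic identity in $\cat C$ of the form $\rho_{A,X}\circ(\text{displayed morphism}) = \id_{\lbiDual X}$.

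Before the computation I would assemble the ingredients. The candidate inverse is built from the same data as $\rho_{A,X}$ in Equation~\eqref{eq: def_of_nat_iso}, but with the half-braidings replaced by their inverses and the left evaluation and coevaluation of $\alpha$ exchanged for those of $\omega = \lDual\alpha$. The key relations are: the half-braiding of $\Omega = \lDual A$ produced in Equation~\eqref{eq: braiding_of_left_dual_twisted_centre}, which expresses $\sigma_{\omega,\blank}$ in terms of $\sigma_{\alpha,\blank}$ and the duality morphisms of $\alpha$; the auxiliary properties of $\coev^l_\alpha$ collected in Diagram~\eqref{eq: unit_braiding_coev}; and the snake identities~\eqref{eq: snake_identities} for $\alpha$ and $\omega$.

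The heart of the argument is then a string-diagram manipulation. After stacking the two diagrams, I would slide the $X$-strand along the half-braidings using their naturality and hexagon axiom, so that a matching pair consisting of a half-braiding and its inverse meet and cancel, and simultaneously bring $\coev^l_\alpha$ together with an evaluation of $\alpha$ (respectively $\omega$) into a configuration where the snake identities collapse the $\alpha$- and $\omega$-strands. What remains after these cancellations is the bare identity on $\lbiDual X$.

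The main obstacle I anticipate is the bookkeeping forced by the double-dual twist. Because $A$ lives in the right-twisted centre $\ACat C = \ZCat{C}[\lbiDual{(\blank)}]$, its half-braiding $\sigma_{\alpha,\blank}$ carries the biduality functor, whereas $\Omega$ sits in the left-twisted centre $\QCat C$ with the twist on the opposite side; consequently the evaluations and coevaluations that are supposed to cancel do not sit on the same strand until the twists have been reconciled. Getting the orientations and the placement of the $\lbiDual{(\blank)}$-labels to line up — so that a half-braiding genuinely meets its inverse and the snake identities become applicable — is the delicate step, whereas once the strands are correctly aligned the cancellations are routine.
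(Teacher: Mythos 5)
Your proposal is correct and follows essentially the same route as the paper: the paper's proof is exactly a direct string-diagram verification that the displayed morphism (which is the entwinement $\rho_{\Omega,X}$ built from the dual $\Omega$) composes with $\rho_{A,X}$ to the identity, using the snake identities together with a variant of Diagram~\eqref{eq: unit_braiding_coev}, the one-sided check sufficing because $\rho_{A,X}$ is already known to be invertible.
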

\begin{proof}
  Let \(X \in \ZCat C\).
  The snake identities and a variant of Equation~\eqref{eq: unit_braiding_coev} imply:
  \begin{equation}\label{eq: diagram_inverse}
  \tikzsetnextfilename{computation_inverse_piotality_morphism}%
  \input{sections/tikzfigures/computation_inverse_piotality_morphism.tikz}%

  \end{equation}
  Thus, writing \(\Omega = (\lDual \alpha, \sigma_{\lDual \alpha, \blank }) \in \ZCat C\), we have \(\rho_{A,X}\rho_{\Omega, X} = \id_X\).
\end{proof}

\begin{lemma}\label{lem: induced_piv_structure_depends_only_on_orbit}
  Two elements \([A], [C] \in \Pic \ACat C\) induce the same pivotal structure on \(\ZCat C\) if and only if there exists a \([B] \in \Pic \SZCat C\) such that \([B \otimes A] = [C]\).
\end{lemma}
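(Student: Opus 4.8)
The plan is to realise the `difference' of the two classes as a single invertible object of $\ZCat C$ and to detect its symmetry through the induced pivotal structures. Fix representatives $A = (\alpha, \sigma_{\alpha, \blank})$ and $C = (\gamma, \sigma_{\gamma, \blank})$ of $[A], [C] \in \Pic \ACat C$, with $\alpha, \gamma$ invertible in $\cat C$, and write $\Omega = (\omega, \sigma_{\omega, \blank}) \defeq \lDual A \in \QCat C$ for the left dual supplied by Theorem~\ref{thm: left_dualising_as_a_module_fun}. Gluing the right-twisted half-braiding of $C$ with the left-twisted half-braiding of $\Omega$, as in Diagram~\eqref{eq: gluing_of_half_brainings}, equips the invertible object $\gamma \otimes \omega \in \cat C$ with an ordinary half-braiding, so that $B \defeq C \otimes \lDual A$ becomes an object of the Drinfeld centre $\ZCat C$. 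Contracting $A$ against $\lDual A$ by the evaluation of $\alpha$---equivalently, the \Malcev identity of Lemma~\ref{lem: inv_twisted_form_heap}---gives $[B \otimes A] = [C \otimes \lDual A \otimes A] = [C]$, independently of any hypothesis. The assertion of the lemma thus reduces to showing that $B$ is symmetric if and only if $\rho_A = \rho_C$.

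The heart of the proof is a single diagrammatic identity. Using the description of $\rho_{C, X}$ from Equation~\eqref{eq: def_of_nat_iso} and that of the inverse $\rho_{A, X}^{-1}$ from Lemma~\ref{lem: inverse_of_ind_piv_struc}, I would compose the two, slide the half-braidings of $\gamma$ and $\omega$ past one another and contract the resulting cups and caps via the snake identities. I expect the outcome to be that, for every $X \in \ZCat C$, the endomorphism $\rho_{A, X}^{-1} \rho_{C, X} \from X \to X$ equals the left-hand side of Equation~\eqref{eq: alternative_char_of_sym_objs} associated with $B = (\gamma \otimes \omega, \sigma_{B, \blank})$---that is, the `twisted trace' of $B$. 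This is the step I anticipate to be the main obstacle: it is a routine but delicate string-diagram manipulation, and the care lies in matching the glued half-braiding of $B$ with the monodromy term $\sigma_{B, X}\sigma_{X, B}$ featured in Lemma~\ref{lem: alternative_char_of_sym_objs}.

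Granting this identity, both implications are immediate. If $\rho_A = \rho_C$, then $\rho_{A, X}^{-1}\rho_{C, X} = \id_X$ for all $X$, so the twisted trace of $B$ is trivial and Lemma~\ref{lem: alternative_char_of_sym_objs} shows $B \in \SZCat C$; together with $[B \otimes A] = [C]$ this yields the desired $[B] \in \Pic \SZCat C$. Conversely, assume $[C] = [B' \otimes A]$ for some $B' \in \Pic \SZCat C$. Since the induced structure depends only on the isomorphism class of its argument (Lemma~\ref{lem: pivotal_morphisms_depend_only_on_iso_class}), the snake identity gives $B = C \otimes \lDual A \cong B' \otimes A \otimes \lDual A \cong B'$ in $\ZCat C$, so $B$ is symmetric; by Lemma~\ref{lem: alternative_char_of_sym_objs} its twisted trace is the identity, and the diagrammatic identity then forces $\rho_{A, X}^{-1}\rho_{C, X} = \id_X$, \ie $\rho_A = \rho_C$.
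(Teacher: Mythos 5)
Your proposal is correct and follows essentially the same route as the paper: there too the key object is $C \otimes \lDual A \in \ZCat C$, whose entwinement with $X$ is separated via the hexagon identities into $\rho_{\lDual A, X}\,\rho_{C,X} = \rho_{A,X}^{-1}\rho_{C,X}$ (using Lemma~\ref{lem: inverse_of_ind_piv_struc}) and then compared with the identity through Lemma~\ref{lem: alternative_char_of_sym_objs}. The one diagrammatic identity you defer is precisely the computation carried out in Equation~\eqref{eq: seperating_entwinments_via_hex_ids}, so it goes through as you anticipate.
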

\begin{proof}
  Let \([A], [C] \in \Pic \ACat C\).
  Suppose there exists a \([B] \in \Pic \SZCat C\) such that \([B \otimes A] = [C]\).
  For any \(X\in \ZCat C\), we compute:
  \begin{equation}\label{eq: seperating_entwinments_via_hex_ids}
  \tikzsetnextfilename{orbits_and_piv_structure}%
  \input{sections/tikzfigures/orbits_and_piv_structure.tikz}%

  \end{equation}
  If conversely \(\rho_A =\rho_C\), we claim that \(C \otimes \lDual A\) is symmetric.
  By Lemma~\ref{lem: alternative_char_of_sym_objs} we have to show that for every \(X\in \ZCat C\) the `entwinement' \(\rho_{C \otimes \lDual A}\) of \(C \otimes \lDual A\) with \(X\) is the identity and indeed we observe
  \begin{equation*}
    \rho_{C \otimes \lDual A, X}^{\vphantom{-1}} = \rho_{\lDual A, X}^{\vphantom{-1}} \rho_{C, X}^{\vphantom{-1}} = \rho_{A, X}^{-1} \rho_{C, X}^{\vphantom{-1}} = \id_X^{\vphantom{-1}}.
  \end{equation*}
  For the first equality we used the hexagon identities as in Equation~\eqref{eq: seperating_entwinments_via_hex_ids} to separate \(\rho_{C \otimes \lDual A, X}\) into two parts.
  The second one follows from the description of the inverse of \(\rho_{A,X}\) given in Lemma~\ref{lem: inverse_of_ind_piv_struc}.
  Finally, since \(\id_{C}\otimes \ev_{A}^l \from C \otimes \lDual A \otimes A \to C\) is an isomorphism in \(\ACat C\), we have \([(C\otimes \lDual A) \otimes A] = [C]\).
\end{proof}

The isomorphisms classes of $\cat{C}$-invertible objects of \(\ACat C\) are not just a set but form the Picard heap \(\Pic \ACat C\).
Our next lemma shows that its heap operation projects onto the orbits under the
\(\Pic \SZCat C\)-action.

\begin{lemma}\label{lem: quotient_of_Pic_A_is_heap}
  The canonical projection \(\pi \from \Pic \ACat C \to \Pic \ACat C/\Pic \SZCat C\) induces a heap structure on the set of equivalence classes \(\Pic \ACat{C}/\Pic \SZCat C\).
\end{lemma}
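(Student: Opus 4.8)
The plan is to show that the equivalence relation $\sim$ defined in Equation~\eqref{eq: equiv_action_Pic'_of_Z} is compatible with the heap multiplication on $\Pic \ACat C$, so that the ternary operation descends to the quotient set $\Pic \ACat C/\Pic \SZCat C$. Concretely, I must verify that if $[A] \sim [A']$, $[B] \sim [B']$ and $[C] \sim [C']$, then
\begin{equation*}
	\left\langle [A], [B], [C] \right\rangle \sim \left\langle [A'], [B'], [C'] \right\rangle,
\end{equation*}
where the heap multiplication is $\left\langle [A],[B],[C] \right\rangle = [A \otimes \lDual B \otimes C]$ as in Lemma~\ref{lem: inv_twisted_form_heap}. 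Once this compatibility is established, the induced operation on the quotient inherits the generalised associativity axiom~\eqref{eq: heap_ass_constraint} and the \Malcev identities~\eqref{eq: heap_unit_constraint} immediately from the fact that the projection $\pi$ is surjective and the corresponding identities already hold on $\Pic \ACat C$.

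First I would spell out the orbit relation: $[A] \sim [A']$ means there is some $[S] \in \Pic \SZCat C$ with $[S \otimes A] = [A']$, and similarly $[T \otimes B] = [B']$ and $[U \otimes C] = [C']$ for symmetric invertible $S, T, U$. The core computation is then to exhibit a single symmetric invertible object $[W] \in \Pic \SZCat C$ satisfying $[W \otimes (A \otimes \lDual B \otimes C)] = [A' \otimes \lDual{B'} \otimes C']$. Expanding the right-hand side using $[A'] = [S \otimes A]$ etc.\ and Theorem~\ref{thm: left_dualising_as_a_module_fun}, which gives $\lDual{(T \otimes B)} \cong \lDual B \otimes \lDual T$ compatibly with half-braidings, one rewrites $A' \otimes \lDual{B'} \otimes C'$ as $S \otimes A \otimes \lDual B \otimes \lDual T \otimes U \otimes C$. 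The task reduces to commuting the symmetric factors $S$, $\lDual T$ and $U$ to the front past $A$, $\lDual B$ and $C$, which is precisely what the symmetric (trivial) braidings of objects in $\SZCat C$ permit, and to collecting them into a single object $W \defeq S \otimes \lDual T \otimes U$. That $\lDual T$ is again symmetric and invertible follows from Lemma~\ref{lem: symmetric_centre_and_rigidity}, and that $W$ is invertible in $\SZCat C$ follows since tensor products and duals of invertible objects are invertible.

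The main obstacle is the bookkeeping of half-braidings while sliding the symmetric factors past the other objects: the equality $[W \otimes A \otimes \lDual B \otimes C] = [S \otimes A \otimes \lDual B \otimes \lDual T \otimes U \otimes C]$ holds in $\Pic \ACat C$ only if the reordering isomorphism of underlying objects in $\cat C$ is in fact an isomorphism in $\ACat C$, \ie respects the half-braidings. Here the defining property~\eqref{eq: symmetric_object} of symmetric objects, applied through the gluing rule for half-braidings in Diagram~\eqref{eq: gluing_of_half_brainings}, guarantees that the braiding of a symmetric object with any object of the centre can be inverted and reorganised freely; this is exactly the categorical analogue of a central group-like element commuting with everything. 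I would carry out this reordering as a short string-diagram manipulation, invoking the triviality $\sigma_{S,X}^{-1} = \sigma_{X,S}$ at each crossing, and conclude that the resulting morphism is an isomorphism in the anti-Drinfeld centre, so the two classes coincide. Finally, with compatibility in hand, the heap axioms transfer along $\pi$ by a routine diagram chase, completing the proof.
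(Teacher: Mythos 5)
Your proposal is correct and follows essentially the same route as the paper: both reduce well-definedness to sliding the symmetric invertible factors past the other tensorands via half-braidings that are isomorphisms in $\ACat C$ (resp.\ $\QCat C$), collecting them into a single object of $\Pic \SZCat C$, and then letting the heap axioms descend along the surjection $\pi$. The only cosmetic difference is that the paper's key observation is that $\sigma_{X,A}$ is an isomorphism in $\ACat C$ for \emph{any} $X \in \ZCat C$, so the symmetry of $S$, $T$, $U$ is needed only to guarantee that the collected factor $W$ lies in $\Pic \SZCat C$, not for the reordering itself.
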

\begin{proof}
  The claim follows from a general observation.
  Let \(X \in \ZCat C\) and \(A \in \ACat C\). The half-braiding \(\sigma_{X,A} \from X \otimes A \to A \otimes X\) is an isomorphism in \(\ACat C\):
  \begin{equation}
  \tikzsetnextfilename{braiding_is_iso_in_ACat}%
  \input{sections/tikzfigures/braiding_is_iso_in_ACat.tikz}%

  \end{equation}
  Likewise, \(\sigma_{X, \lDual A} \from X \otimes \lDual A \to \lDual A \otimes X\) is an isomorphism in \(\QCat C\).
  As a consequence, for all \([A], [A'], [A''] \in \Pic \ACat{C}\) and \([B], [B'], [B''] \in\Pic \SZCat C\) we have
  \begin{align*}
    \pi &\left(\left\langle [A], [A'], [A''] \right\rangle \right)
        = \pi \left( [A \otimes\lDual{A'}\otimes A''] \right)
        =\pi \left( [B \otimes \lDual{B'} \otimes B'' \otimes A \otimes\lDual{A'}\otimes A''] \right)\\ & \quad
                                                                          =\pi \left( [B \otimes A \otimes \lDual{(B' \otimes A')} \otimes B'' \otimes A''] \right)
                                                                          =\pi \left(\left\langle [B \otimes A], [B' \otimes A'], [B'' \otimes A''] \right\rangle \right).
  \end{align*}
\end{proof}

Recall that due to Example~\ref{ex: heap_of_nat_isos}, the pivotal structures \(\Piv \ZCat C\) on \(\ZCat C\) admit a heap operation.
This allows us to distil our previous observations into a single result.

\begin{theorem}\label{thm: equiv_establishes_piv}
  The morphism of heaps
  \begin{equation}
    \kappa \from \Pic \ACat C \to \Piv \ZCat C, \qquad [A] \mapsto \rho_A
  \end{equation}
  induces a unique injective morphism  \(\iota \from \Pic \ACat C /\Pic \SZCat C \to \Piv \ZCat C\) such that the following diagram commutes in the category of heaps:
  \begin{equation}\label{eq: induced_heap_morphism_condition_cd}
    \begin{tikzcd}[ampersand replacement=\&]
      \Pic \ACat C
      \arrow[dd, "\pi"', two heads]
      \arrow[rr, "\kappa"]
      \& \&
      \Piv \ZCat C \\
      \& \& \\
      \Pic \ACat C /\Pic \SZCat C
      \arrow[rruu, "\exists! \iota"', dashed, hook]
      \& \&
    \end{tikzcd}
  \end{equation}
\end{theorem}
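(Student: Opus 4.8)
The plan is to assemble the theorem from the three preceding lemmas, which together do all the real work. First I would verify that $\kappa$ is indeed a morphism of heaps. Recall that $\Pic \ACat C$ carries the heap multiplication $\langle [A],[A'],[A''] \rangle = [A \otimes \lDual{A'} \otimes A'']$ from Lemma~\ref{lem: inv_twisted_form_heap}, while $\Piv \ZCat C$ carries the heap multiplication $\langle \phi,\psi,\xi\rangle = \phi\psi^{-1}\xi$ of Example~\ref{ex: heap_of_nat_isos} (pivotal structures being monoidal natural isomorphisms $\Id_{\ZCat C} \to \lbiDual{(\blank)}$). By Lemma~\ref{lem: pivotal_morphisms_depend_only_on_iso_class} the assignment $[A] \mapsto \rho_A$ is well defined on isomorphism classes, so it remains to check the identity $\rho_{\langle [A],[A'],[A'']\rangle} = \rho_A \, \rho_{A'}^{-1} \, \rho_{A''}$. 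The left-hand side is $\rho_{A \otimes \lDual{A'} \otimes A''}$, and I would expand this using the hexagon-type separation of entwinements established in Equation~\eqref{eq: seperating_entwinments_via_hex_ids} (the first computation in the proof of Lemma~\ref{lem: induced_piv_structure_depends_only_on_orbit}), which tells us $\rho_{B \otimes A} = \rho_A \, \rho_B$ for a tensor product of objects. Combined with the description of $\rho_{\lDual{A'}}$ as $\rho_{A'}^{-1}$ coming from Lemma~\ref{lem: inverse_of_ind_piv_struc}, this yields exactly the required factorisation, so $\kappa$ respects the heap multiplications.

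Second, I would produce the factorisation through the quotient. By Lemma~\ref{lem: quotient_of_Pic_A_is_heap} the projection $\pi \from \Pic \ACat C \to \Pic \ACat C/\Pic \SZCat C$ is a surjective heap morphism, and by Lemma~\ref{lem: induced_piv_structure_depends_only_on_orbit} two classes $[A], [C]$ satisfy $\rho_A = \rho_C$ if and only if they lie in the same $\Pic \SZCat C$-orbit, \ie iff $\pi([A]) = \pi([C])$. This is precisely the statement that $\kappa$ is constant on the fibres of $\pi$, so by the universal property of the quotient there is a unique map of sets $\iota \from \Pic \ACat C/\Pic \SZCat C \to \Piv \ZCat C$ with $\iota \pi = \kappa$. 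The same fibre condition, read in the other direction, gives injectivity of $\iota$: if $\iota(\pi([A])) = \iota(\pi([C]))$ then $\rho_A = \rho_C$, whence $\pi([A]) = \pi([C])$ by the `only if' half of Lemma~\ref{lem: induced_piv_structure_depends_only_on_orbit}.

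Finally I would check that $\iota$ is not merely a map of sets but a morphism of heaps, which makes Diagram~\eqref{eq: induced_heap_morphism_condition_cd} commute in the category of heaps as claimed. Since $\pi$ is a surjective heap morphism and $\kappa = \iota\pi$ is a heap morphism, the heap structure on elements of the quotient can be written as $\langle \pi(a),\pi(b),\pi(c)\rangle = \pi(\langle a,b,c\rangle)$; applying $\iota$ and using $\iota\pi = \kappa$ together with the fact that $\kappa$ is a heap morphism gives $\iota(\langle \pi(a),\pi(b),\pi(c)\rangle) = \langle \iota\pi(a), \iota\pi(b), \iota\pi(c)\rangle$, as needed. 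The main obstacle, and the only step requiring genuine diagrammatic work, is the first one: verifying $\rho_{A \otimes \lDual{A'} \otimes A''} = \rho_A\rho_{A'}^{-1}\rho_{A''}$ by carefully combining the entwinement-separation of Equation~\eqref{eq: seperating_entwinments_via_hex_ids} with the inverse formula of Lemma~\ref{lem: inverse_of_ind_piv_struc}. Everything after that is a formal application of the universal property of a quotient heap, so I would keep those verifications brief and lean on Lemmas~\ref{lem: induced_piv_structure_depends_only_on_orbit} and~\ref{lem: quotient_of_Pic_A_is_heap}.
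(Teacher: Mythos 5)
Your proposal is correct and follows essentially the same route as the paper's proof: the same four lemmas are used in the same roles (well-definedness from Lemmas~\ref{lem: pivotal_structure_from_entwinement} and~\ref{lem: pivotal_morphisms_depend_only_on_iso_class}, the heap-morphism property of $\kappa$ from the entwinement separation of Equation~\eqref{eq: seperating_entwinments_via_hex_ids} together with Lemma~\ref{lem: inverse_of_ind_piv_struc}, and factorisation plus injectivity from Lemmas~\ref{lem: induced_piv_structure_depends_only_on_orbit} and~\ref{lem: quotient_of_Pic_A_is_heap}). The only difference is that you spell out the universal-property and heap-morphism verifications for $\iota$ in slightly more detail than the paper does.
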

\begin{proof}
  Lemmas~\ref{lem: pivotal_structure_from_entwinement} and~\ref{lem: pivotal_morphisms_depend_only_on_iso_class} show that \(\kappa\) is well-defined.
  Given three elements \([A], [B], [C] \in \Pic \ACat C\), we compute
  \begin{equation*}
    \kappa(\langle [A], [B], [C] \rangle )
    = \rho_{A \otimes \lDual B \otimes C}
    = \rho_{A} \rho_{\lDual B} \rho_{C}
    = \rho_{A} \rho_{B}^{-1} \rho_{C}
    = \langle \rho_{A}, \rho_{B}^{-1}, \rho_{C}\rangle.
  \end{equation*}
  Here we applied the hexagon identities as in Equation~\eqref{eq: seperating_entwinments_via_hex_ids} for the second step and Lemma~\ref{lem: inverse_of_ind_piv_struc} for the third one.
  We observe, \(\kappa\) is a morphism of heaps.
  Lemma~\ref{lem: induced_piv_structure_depends_only_on_orbit} states that for any two elements \([A], [B]\in \Pic \ACat C\) we have \(\kappa([A]) =\kappa([B])\) if and only if \(\pi([A]) = \pi([B])\).
  It follows from Lemma~\ref{lem: quotient_of_Pic_A_is_heap} that the unique injective map
  \[
    \iota \from \Pic \ACat C /\Pic \SZCat C \to \Piv \ZCat C
  \]
  that lets Diagram~\eqref{eq: induced_heap_morphism_condition_cd} commute is a morphism of heaps.
\end{proof}

\begin{remark}\label{rem: projection_modular_tensor_category}
  The centre \(\ZCat C\) of a \emph{finite tensor category} \(\cat C\) over an algebraically closed field is factorisable due to  \cite[Proposition~4.4]{Etingof2004}.
  By \cite[Theorem~1.1]{Shimizu2019}, the Picard group  \(\Pic \SZCat C\) is trivial.
  In this setting, the induced pivotal structures depend only on the Picard heap \(\Pic \ACat C\) and not on a quotient thereof.

  On the other side of the spectrum, one might consider the discrete category \(\cat G\) of an abelian group \(G\); its set of objects is \(G\) and all morphisms are identities.
  The category \(\cat G\) is rigid monoidal with the tensor product given by the multiplication of \(G\) and the left and right duals given by the respective inverses.
  A direct computation shows that \(\SZCat{G}= \ZCat{G}\cong \cat G\).
  Since \(\cat G\) is skeletal\footnote{%
    A category \(\cat C\) is \emph{skeletal} if \(X \cong Y\) implies that \(X=Y\),
    for all objects \(X, Y \in \cat C\).%
  }
  and every object is invertible, \(\Pic \SZCat G \cong G\).
  As the double dual and identity functor on \(\cat G\) coincide, the same argument implies \(\Pic \ACat G \cong G\) and thus \(\Pic \ACat G / \Pic \SZCat G \cong \{1\}\).
\end{remark}

It was proven by Shimizu in~\cite[Theorem~4.1]{Shimizu2016} that under certain circumstances all pivotal structures on the centre of \(\cat C\) are induced by the quasi-pivotal structures of \(\cat C\).
In our terminology, his result can be formulated as:
\begin{proposition}
  The map \(\iota \from \Pic \ACat C /\Pic \SZCat C \to \Piv \ZCat C\) is bijective if \(\cat C\) is a finite tensor category.
\end{proposition}

However, in the introduction of \cite{Shimizu2016} the author states that it is not to be expected that this holds true in general.
In the remainder of this section, we will construct an explicit counterexample.
The key observation needed to find a fitting category \(\cat C\) is the following:
Suppose there is an object \(X \in \cat C\) which can be endowed with two different half-braidings \(\sigma_{X, \blank}\) and \(\chi_{X, \blank}\).
Assume furthermore that  there is a pivotal structure \(\zeta \from \Id_{\ZCat C} \to \lbiDual{(\blank)}\) on \(\ZCat C\) such that \(\zeta_{(X,\sigma_{X,\blank})} \neq \zeta_{(X, \chi_{X, \blank})}\).
If the unit of \(\cat C\) is the only invertible object, there is no (quasi-)pivotal structure inducing \(\zeta\) and therefore
\(\iota  \from \Pic \ACat C /\Pic \SZCat C \to \Piv \ZCat C\) cannot be surjective.

We will now  define such a category \(\cat C\) in terms of generators and relations.
The details of this type of construction are explained in \cite[Chapter~XII]{Kassel1998}.
As a first step, consider a `free' monoidal category \(\cat C^\text{free}\).
Its objects are monomials in the variable \(X\).
Their tensor product is given by \(X^n\otimes X^m =X^{n+m}\).
The morphisms of \(\cat C^{\text{free}}\) are formal compositions and tensor products of `atomic' building blocks, subject to suitable associativity and unitality relations.
These `atoms' are identities on objects plus the set \(\mathbb M\) of \emph{generating morphisms} depicted below.
\begin{equation} \label{eq: generating_morphisms}
  \tikzsetnextfilename{generating_morphisms}%
  \input{sections/tikzfigures/generating_morphisms.tikz}%

\end{equation}

By \cite[Lemma~XII.1.2]{Kassel1998}, every morphism \(f \from X^n \to X^m\) in \(\cat C^\text{free}\) is either the identity or can be written as
\begin{equation*}
  f = (\id_{X^{j_l}}\otimes f_l \otimes \id_{X^{i_l}})
  \dots
  (\id_{X^{j_2}}\otimes f_2 \otimes \id_{X^{i_2}})
  (\id_{X^{j_1}}\otimes f_1 \otimes \id_{X^{i_1}}),
\end{equation*}
where \(i_1,j_1, \dots, i_l,j_l \in \mathbb N\) and \(f_1, \dots, f_l \in \mathbb M\).
Such a presentation is not unique but the number \(l \in \mathbb N\) of generating morphisms needed to write \(f\) in such a manner is.
We call it the \emph{degree} of \(f\) and write \(\deg(f)=l\).

To pass to the category \(\cat C\), we take a quotient of \(\cat C^{\text{free}}\) by the relations depicted below.
This will turn \(\cat C\) into a pivotal, strict rigid category and allows us to extend \(\sigma\) to a symmetric braiding.
To increase readability, we omit labeling the strings with \(X\).
\begin{gather}
  \tikzsetnextfilename{counter_rels_idsquare_leftsnake}%
  \input{sections/tikzfigures/counter_rels_idsquare_leftsnake.tikz}%
 \label{eq: squares_are_ids_left_snake} \displaybreak[0]\\
  \tikzsetnextfilename{counter_rels_self_duals}%
  \input{sections/tikzfigures/counter_rels_self_duals.tikz}%
 \label{eq: everything_is_self_dual} \\
  \tikzsetnextfilename{counter_rels_braidrho}%
  \input{sections/tikzfigures/counter_rels_braidrho.tikz}%
 \label{eq: rho_is_nat}
\end{gather}

Due to \cite[Proposition~XII.1.4]{Kassel1998}, we observe that there is a unique functor \(P \from \cat C^{\text{free}}\to \cat C\) which maps objects to themselves and generating morphisms to their respective equivalence classes.

\begin{definition}\label{def: degree of morphism}
  Consider a morphism \(f \in \Hom{\cat C}{X^n}{X^m}\).
  A \emph{presentation} of \(f\) is a morphism \(g \in \Hom{\cat C^{\text{free}}}{X^n}{Y^n}\) such that \(f = P(g)\).
  If the degree of \(g\) is minimal amongst the presentations of \(f\), we call it a \emph{minimal presentation}.
\end{definition}

Before we classify half-braidings of objects in \(\cat C\) by studying their minimal presentations, we first need to gather some information about the structure of \(\cat C\).

\begin{theorem}\label{thm: counterexample_underlying_cat_properties}
  The category \(\cat C\) is strict rigid and the double dual functor is the identity.
  Furthermore, \(\id_X,\rho_X\from X \to X\) can be extended to pivotal structures and \(\sigma_{X,X}\from X^2 \to X^2\) to a symmetric braiding.
\end{theorem}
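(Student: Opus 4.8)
The strategy throughout is to exploit the presentation of $\cat C$ by generators and relations. By \cite[Lemma~XII.1.2]{Kassel1998} every non-identity morphism is a composite of tensor products of the generating morphisms in $\mathbb M$ with identities, so any structure I wish to define (a dual, a natural transformation, or a braiding) can be specified on the generating object $X$ and on the generators in $\mathbb M$, and then shown to descend to the quotient by checking compatibility with the relations~\eqref{eq: squares_are_ids_left_snake}--\eqref{eq: rho_is_nat}; the degree function and the minimal presentations of Definition~\ref{def: degree of morphism} organise the attendant inductions. With this in hand I first establish strict rigidity: relation~\eqref{eq: everything_is_self_dual} lets me put $\lDual X = X = \rDual X$ with evaluations and coevaluations drawn from $\mathbb M$, the snake identities for $X$ being exactly the snake part of~\eqref{eq: squares_are_ids_left_snake}. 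Each $X^n$ is then self-dual with nested evaluations and coevaluations, and the snake identities for $X^n$ follow from the atomic ones by the usual induction; strictness in the sense of Definition~\ref{def: stric_rigid_cat} and the equality $\lbiDual{(\blank)} = \Id_{\cat C}$ on objects are immediate. That $\lbiDual{(\blank)}$ is the identity on morphisms I verify on each generator via~\eqref{eq: left_dual_of_morph} and the snake and self-duality relations, then extend by functoriality.

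For the pivotal structures, observe that since $\lbiDual{(\blank)} = \Id_{\cat C}$ a pivotal structure in the sense of Definition~\ref{def: pivotal_category} is exactly a monoidal natural automorphism of $\Id_{\cat C}$. I define two candidates by $\zeta_{X^n} \defeq \zeta_X^{\otimes n}$ with $\zeta_X \in \{ \id_X, \rho_X \}$, so that monoidality holds by construction; invertibility of the $\rho$-candidate comes from the relation $\rho_X^2 = \id_X$ in~\eqref{eq: squares_are_ids_left_snake}. Since naturality is closed under composition and tensor products of morphisms, it suffices to check the naturality square on each generator in $\mathbb M$. For $\id_X$ this is automatic; for $\rho_X$ the squares against the evaluation and coevaluation are furnished by~\eqref{eq: everything_is_self_dual}, while the square against $\sigma_{X,X}$ is precisely relation~\eqref{eq: rho_is_nat}.

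For the braiding, I extend $\sigma_{X,X}$ to morphisms $\sigma_{X^n, X^m} \from X^n \otimes X^m \to X^m \otimes X^n$ by iterating the two hexagon formulas~\eqref{eq: hexagon_axiom_left_hand_side} and~\eqref{eq: hexagon_axiom_right_hand_side}, which simultaneously define the braiding on tensor products and force the hexagon axioms of Definition~\ref{def: braided_cat} by induction; invertibility follows from $\sigma_{X,X}^2 = \id_{X^2}$. Naturality again reduces to the generators: the square against $\sigma_{X,X}$ is the Yang--Baxter relation, the squares against the evaluation and coevaluation encode compatibility of the braiding with the self-duality of~\eqref{eq: everything_is_self_dual}, and the square against $\rho_X$ is~\eqref{eq: rho_is_nat}.

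The hard part will be well-definedness on the quotient. Every component I write down and every naturality identity I assert must be checked compatible with the imposed relations, and the systematic way to do this is to reduce each claim to a finite check on the generators, using the minimal-presentation and degree bookkeeping of Definition~\ref{def: degree of morphism} together with \cite[Proposition~XII.1.4]{Kassel1998}. The most delicate such reduction is for the braiding, where the interaction of the iterated hexagon definition with the Yang--Baxter and self-duality relations carries essentially all of the diagrammatic content; by comparison the rigidity and pivotality checks are short once the self-dual normal form is in place.
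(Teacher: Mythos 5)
Your proposal is correct and follows essentially the same route as the paper: self-duality of $X$ via the imposed relations yields strict rigidity with trivial bidualisation, the pivotal structures and the braiding are defined componentwise by tensor powers and by the hexagon formulas so that monoidality and the hexagon axioms hold by construction, and naturality is reduced to a check on the generators using Relations~\eqref{eq: everything_is_self_dual} and~\eqref{eq: rho_is_nat} (with the evaluation case obtained from the coevaluation case by dualising). The only cosmetic difference is that you invoke the degree and minimal-presentation bookkeeping for well-definedness, which the paper reserves for the later normal-form result (Theorem~\ref{thm: unique_decomposition_of_automorphisms}) and does not need here, since verifying naturality on the generating morphisms already suffices.
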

\begin{proof}
  The evaluation and coevaluation morphisms plus their snake identities make \(X \in \cat C\), and by extension every object of \(\cat C\), its own left, respectively right, dual.
  Using the Relations~\eqref{eq: everything_is_self_dual} together with the snake identities, we compute
  \begin{gather*}
    \lDual{\rho_X} = \rho_X = \rDual \rho_X, \qquad \qquad \qquad \qquad \lDual{ \sigma_{X,X}} =\sigma_{X,X} =\rDual \sigma_{X,X},\\
    \lDual{\ev_X} =\coev_X = \rDual \ev_X \qquad \qquad \text{and} \qquad \qquad \lDual{\coev_X} = \ev_X = \rDual \coev_X.
  \end{gather*}
  Thus, \(\cat C\) is a strict rigid category whose double dual functor is equal to the identity.

  Our candidate for a pivotal structure on \(\cat C\), different from the trivial one, is
  \begin{equation*}
    \rho \from \Id_\cat C \to \Id_\cat C \qquad \text{defined by}\qquad
    \rho_{X^n}\defeq \rho_X \otimes \dots \otimes \rho_X \from X^n \to X^n ,\qquad n \in \mathbb N.
  \end{equation*}
  This family of isomorphisms is compatible with the monoidal structure of \(\cat C\) by construction and we only have to investigate its naturality.
  It suffices to verify this property on the generators. Relations~\eqref{eq: rho_is_nat} imply that \(\rho_{X^2}\) commutes with \(\sigma_{X,X}\).
  For the evaluation of \(X\in \cat C\) we use the dual of Equation~\eqref{eq: everything_is_self_dual} to compute
  \begin{equation*}
    \ev_X\rho_{X^2}= \ev_X (\rho_X \otimes \rho_X)= \ev_X (\lDual \rho_X \otimes \rho_X) = \ev_X (\id_X \otimes \rho^2_X) = \rho_1\ev_X.
  \end{equation*}
  Applying the left dualising functor, we get \(\coev_X \rho_1 = \rho_{X^2} \coev_X\) and thus \(\rho \from \Id_\cat C \to \Id_\cat C\) defines a pivotal structure.

  Lastly, we establish that \(\sigma_{X,X}\) implements a symmetry \(\sigma \from \otimes \to \otimes^{\op}\) on \(\cat C\).
  Set
  \begin{equation*}
    \sigma_{X,X^m}\defeq (\id_X\otimes\sigma_{X, X^{m-1}})(\sigma_{X,X}\otimes \id_{X^{m-1}}), \qquad\qquad m\in \mathbb N
  \end{equation*}
  and extend this to arbitrary objects:
  \begin{equation*}
    \sigma_{X^n,X^m}\defeq (\sigma_{X^{n-1,m}}\otimes \id_X)(\id_{X^{n-1}}\otimes \sigma_{X,X^m}), \qquad \qquad n,m \in \mathbb N.
  \end{equation*}
  As this family of isomorphisms is constructed according to the hexagon axioms, we only have to prove its naturality.
  Again, it suffices to consider the generating morphisms.
  By Equation~\eqref{eq: rho_is_nat}, \(\sigma\) is natural with respect to \(\rho_X\), \(\sigma_{X,X}\) and \(\coev_X\).
  The self-duality of \(\sigma_{X,X}\) and \(\lDual \coev_X =\ev_X\) imply the desired commutation between \(\sigma\) and \(\ev_X\).
  Thus \(\sigma\) is a braiding on \(\cat C\), which is symmetric by Equation~\eqref{eq: squares_are_ids_left_snake}.
\end{proof}

We think of a generic morphism of \(\cat C\) to be of the form:
\begin{equation} \label{eq: generic_morphism_of_cat_C}
  \tikzsetnextfilename{generic_morphism_C}%
  \input{sections/tikzfigures/generic_morphism_C.tikz}%

\end{equation}
This diagram suggests a distinction between different kinds of morphisms: there are \emph{connectors}, which link an input to an output vertex, \emph{closed loops} and \emph{half-circles} of \emph{evaluation}- and \emph{coevaluation-type}.
Connectors induce a permutation on a subset of \(\mathbb N\).
For example, the permutation arising from Diagram~\eqref{eq: generic_morphism_of_cat_C} can be identified with \((1 \; 2)(3 \; 4)\).

Conversely, suppose \(s= t_{i_1} \dots t_{i_l}\in\Sym(n)\) to be a permutation written as a product of elementary transpositions and set \(f_s \defeq f_{t_{i_1}}\dots f_{t_{i_l}} \from X^n \to X^n\), where
\begin{equation*}
  f_{t_i} \defeq \id_{X^{i-1}}\otimes \sigma_{X,X}\otimes \id_{X^{n-(i+1)}}\from X^n \to X^n, \qquad \text{for} \qquad  1\leq i \leq n-1.
\end{equation*}
Since the braiding \(\sigma\) is symmetric \(f_s\) does not depend on the presentation of \(s\).
However, should the presentation of \(s\) be minimal, then so is the corresponding presentation of \(f_s\).
\begin{equation*}
  \tikzsetnextfilename{permutation_example}%
  \input{sections/tikzfigures/permutation_example.tikz}%

\end{equation*}

To derive a normal form of the automorphisms of \(\cat C\) and turn our previously explained thoughts into precise mathematical statements, we need to study the `topological features' of the  morphisms in \(\cat C\).

\begin{remark}\label{rem: category_of_tangles}
  We recall the \emph{category \(\cat T\) of tangles}, a close relative to the string diagrams arising from \(\cat C\), based on \cite[Chapter XII.2]{Kassel1998}.
  Its objects are finite sequences in \(\{+, -\}\) and its morphisms are isotopy classes of oriented tangles.
  A detailed discussion of tangles is given in \cite[Definition X.5.1]{Kassel1998}.
  For us, it suffices to think of an oriented tangle \(L\) of type \((n,m)\) as a finite disjoint union of embeddings of either the unit circle \(S^1\) or the interval \([0,1]\) into \(\mathbb R^2 \times [0,1]\) such that
  \begin{equation}\label{eq: tangle_boundary_condition}
    \partial L = L \, \cap \left( \mathbb R^2 \times \{0,1\} \right) = \left([n] \times \left\{(0,0)\right\}\right) \cup \left([l]\times \left\{(0,1)\right\}\right),
  \end{equation}
  where \([n] = \{1, \dots, n\}\) and \([l] = \{1, \dots, l\}\).
  The orientation on each of the connected components of \(L\) is induced by the counter-clockwise orientation of \(S^1\) and the (ascending) orientation of \([0,1]\).
  The tensor product of tangles is given by pasting them next to each other.
  Their composition is implemented, by appropriate gluing and rescaling.
\end{remark}

To distinguish isotopy classes of tangles, one can study their images under the projection \(\mathbb R^2 \times [0,1] \to \mathbb R \times [0,1]\).
This leads to a combinatorial description of \(\cat T\), see for example \cite[Theorem~XII.2.2]{Kassel1998}.

\begin{proposition}\label{prop: presentation_tangle_category}
  The strict monoidal category \(\cat T\) is  generated by the morphisms:
  \begin{equation*}
  \tikzsetnextfilename{generating_morphisms_tangle_category}%
  \input{sections/tikzfigures/generating_morphisms_tangle_category.tikz}%

  \end{equation*}
  These are subject to the following relations:
  \begin{gather}
  \tikzsetnextfilename{tangle_category_snake_identities}%
  \input{sections/tikzfigures/tangle_category_snake_identities.tikz}%
 \displaybreak[0]\\
  \tikzsetnextfilename{tangle_category_invertability}%
  \input{sections/tikzfigures/tangle_category_invertability.tikz}%
 \displaybreak[0]\\
  \tikzsetnextfilename{tangle_category_dual_braiding}%
  \input{sections/tikzfigures/tangle_category_dual_braiding.tikz}%
 \displaybreak[0]\\
  \tikzsetnextfilename{tangle_category_braiding_dual}%
  \input{sections/tikzfigures/tangle_category_braiding_dual.tikz}%
 \displaybreak[0]\\
  \tikzsetnextfilename{tangle_category_twist_trivial}%
  \input{sections/tikzfigures/tangle_category_twist_trivial.tikz}%

  \end{gather}
\end{proposition}

The connection between tangles and the category \(\cat C\) is attained through applying \cite[Proposition~XII.1.4]{Kassel1998}.

\begin{lemma}\label{lem: functor_tangles_to_C}
  There exists a strict monoidal functor \(S \from \cat T \to \cat C\) which is uniquely determined by
  \(S(+) = X = S(-)\) and
  \begin{gather*}\
    S(\ev_{\pm})= \ev_X,  \qquad  S(\coev_{\pm})= \coev_X , \qquad
    S(\tau^{\pm}_{+,+}) = \sigma_{X,X}.
  \end{gather*}
\end{lemma}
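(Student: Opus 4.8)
The plan is to invoke the universal property of a strict monoidal category presented by generators and relations, in the form of \cite[Proposition~XII.1.4]{Kassel1998}, and then to verify that the images of the defining relations of $\cat T$ listed in Theorem~\ref{thm: presentation_tangle_category} hold in $\cat C$. First I would fix $S$ on objects by sending both generators $+$ and $-$ to $X$, so that a sequence of length $n$ is necessarily mapped to $X^n$; this is forced by strict monoidality. On generating morphisms I would use the assignment in the statement. Since $S(+) = X = S(-)$, every cup, cap and crossing has matching source and target after applying $S$, so the assignment is well-posed at the level of generators. Note in particular that both crossings $\tau^{+}_{+,+}$ and $\tau^{-}_{+,+}$ are sent to the single morphism $\sigma_{X,X}$; the functor $S$ thus records the collapse of the over/under distinction that takes place in $\cat C$.

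By \cite[Proposition~XII.1.4]{Kassel1998}, it then remains to check that each relation of Theorem~\ref{thm: presentation_tangle_category} is satisfied by these images. The snake identities of $\cat T$ map to the snake identities of $\cat C$, which hold by Theorem~\ref{thm: counterexample_underlying_cat_properties}. The invertibility relation $\tau^{+}_{+,+}\tau^{-}_{+,+} = \id = \tau^{-}_{+,+}\tau^{+}_{+,+}$ maps to $\sigma_{X,X}\sigma_{X,X} = \id_{X^2}$, which is exactly Relation~\eqref{eq: squares_are_ids_left_snake}. The relations describing how a crossing slides past a cap or a cup map to the self-duality of the braiding, which is part of Relation~\eqref{eq: everything_is_self_dual} and was recorded as $\lDual \sigma_{X,X} = \sigma_{X,X} = \rDual \sigma_{X,X}$ in Theorem~\ref{thm: counterexample_underlying_cat_properties}; combined with the snake identities this yields the required equalities.

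The step I expect to require the most care is the twist-triviality relation, which encodes that $\cat T$ consists of \emph{unframed} tangles. Its image is a curl assembled from $\ev_X$, $\coev_X$ and $\sigma_{X,X}$, and I would verify that this composite equals $\id_X$ rather than the nontrivial automorphism $\rho_X$. This is precisely the point at which the two distinguishing features of $\cat C$ interact: because $\sigma_{X,X}$ is symmetric by Relation~\eqref{eq: squares_are_ids_left_snake} the over- and under-curl coincide, and because the braiding and the duality data are self-dual by Relation~\eqref{eq: everything_is_self_dual}, the curl can be straightened by a single application of the snake identity, producing $\id_X$. Once all relations have been verified, \cite[Proposition~XII.1.4]{Kassel1998} delivers the strict monoidal functor $S$; its uniqueness is automatic, since a strict monoidal functor out of $\cat T$ is determined by its values on the generating objects and morphisms.
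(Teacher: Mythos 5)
Your proposal is correct and follows exactly the route the paper (implicitly) takes: the paper offers no written-out proof beyond the remark that the lemma ``is attained through applying \cite[Proposition~XII.1.4]{Kassel1998}'', i.e.\ define $S$ on the generators of $\cat T$ and check the relations of Theorem~\ref{thm: presentation_tangle_category} in $\cat C$. Your verification of the individual relations --- in particular reducing the twist-triviality to $\ev_X\sigma_{X,X}=\ev_X$, $\sigma_{X,X}\coev_X=\coev_X$ followed by a snake identity --- matches the computations the paper itself performs later (in the proofs of Theorems~\ref{thm: counterexample_underlying_cat_properties} and~\ref{thm: additional_pivotal_structure}), so you have in fact supplied more detail than the source.
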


To investigate the `topological features' of  \(\cat C\), we want to lift its morphisms  to \(\cat T\).
Hereto we want to `trivialise' the generator \(\rho_{X,X}\from X \to X\).
Set \(\cat C/\langle \rho_X \rangle\) to be the category obtained from \(\cat C\) by identifying \(\rho_X\) with \(\id_X\). The `projection' functor \(\mathrm{Pr} \from \cat C \to\cat C/\langle \rho_X \rangle\) allows us to define an equivalence relation on the morphisms of \(\cat C\):
\begin{equation}\label{eq: equivalence_relation_morphs_of_C}
  f \sim g \qquad \iff \qquad \mathrm{Pr}(f) = \mathrm{Pr}(g).
\end{equation}

For example the following two  endomorphisms \(\bigcirc, \bullet \!\! \bigcirc \from 1 \to 1\) of the monoidal unit of \(\cat C\) would be equivalent with respect to this relation:
\begin{equation}\label{eq: the_two_types_of_endomorphisms_of_the_unit}
  \tikzsetnextfilename{clsed_loop}%
  \input{sections/tikzfigures/clsed_loop.tikz}%

\end{equation}

\begin{proposition}\label{prop: unique_decomposition_of_automorphisms}
  Every automorphism \(f\in \cat C(X^n, X^n)\) can be uniquely written as
  \begin{equation}\label{eq: decomp_of_automorphisms}
    f = f_s f_\phi,
  \end{equation}
  where \(f_s \from X^n \to X^n\) is the automorphism induced by a permutation \(s\in \Sym(n)\) and
  \begin{equation}\label{eq: morph_powers_rho}
    f_\phi = \rho_X^{\phi_1} \otimes \dots \otimes \rho_X^{\phi_n}, \qquad \text{ with } \phi_1, \dots, \phi_n \in \mathbb Z_2.
  \end{equation}
  Furthermore, if a minimal presentation \(s= t_{i_1}\dots t_{i_l}\) is fixed, the resulting presentation of \(f\) is minimal as well.
\end{proposition}
\begin{proof}
  For any \(f \in \Aut{\cat C}{X^n}\) there exists another automorphism \(g \in \Aut{\cat C}{X^n}\) such that \(\mathrm{Pr}(f) = \mathrm{Pr}(g)\) and \(g\) has a presentation in which no copies of \(\rho\) occur.
  By proceeding analogous to \cite[Lemma~X.3.3]{Kassel1998}, we construct a tangle \(L_g\) out of \(g\) such that \(S(L_g)=g\) and it is isotopic to a tangle \(L_g'\), whose images of its connected components under the projection \(\mathbb R^2\times [0,1] \to \mathbb R\times [0,1]\) are either closed loops, half-circles of evaluation- or coevaluation-type or straight lines.
  Write \(L_n^{\text{triv}}\) for a tangle which projects to \(n\) parallel straight lines
  \begin{equation*}
    \{(k,t) \mid t \in [0,1] \text{ and } k \in \{1, \dots, n\} \}.
  \end{equation*}
  Since \(g\) was invertible by assumption, we can lift its inverse \(g^{-1}\from X^n \to X^n\) to a tangle \(L_{g^{-1}}\) with \([L_g][L_{g^{-1}}] = [L_n^{\text{triv}}] = [L_{g^{-1}}][L_g] \).
  This equation readily implies that \(L_g'\) could not have contained any loops or half-circles.
  In other words \(g=f_s\), where \(f_s\) is the morphism obtained from the permutation \(s\in \Sym(n)\), induced by the projection of \(L_g'\) onto \(\mathbb R \times [0,1]\).
  Due to the naturality of \(\sigma_{X,X}\), the equivalence between \(f\) and \(g\) implies \(f= f_s f_\phi\), with \(f_\phi\) being a tensor product of identities and copies of \(\rho_X\).
  Consequentially, a minimal representation of \(s\) induces a minimal representation of \(f\).
\end{proof}

The first step in showing that \(\iota  \from \Pic \ACat C /\Pic \SZCat C \to \Piv \ZCat C\) cannot be surjective is to prove that the Picard heap \(\Pic \ACat C\) contains at most two elements.

\begin{corollary}\label{cor: piv_on_C}
  The only (quasi-)pivotal structures on \(\cat C\) are \(\id \from \Id_\cat C \to \Id_\cat C\) and \(\rho\from \Id_\cat C \to \Id_\cat C\).
\end{corollary}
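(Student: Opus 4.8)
The plan is to pin down every (quasi-)pivotal structure by reducing it to data on the single generating object $X$, using the rigidity computations of Theorem~\ref{thm: counterexample_underlying_cat_properties} together with the normal form of Theorem~\ref{thm: unique_decomposition_of_automorphisms}. Recall from the former that the bidualising functor of $\cat C$ is the identity and that every object is its own dual. Consequently a pivotal structure is nothing but a monoidal natural isomorphism $\zeta \from \Id_{\cat C} \to \Id_{\cat C}$, while a quasi-pivotal structure is a pair $(\beta, \rho_\beta)$ with $\beta$ invertible and $\rho_\beta \from \Id_{\cat C} \to \beta \otimes \lbiDual{(\blank)} \otimes \lDual \beta$ a monoidal natural isomorphism.

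The first and crucial step is to show that $1 = X^0$ is the only invertible object of $\cat C$. Since $\lDual{X^n} = X^n$, invertibility of $X^n$ would force its evaluation $\ev^l_{X^n} \from X^{2n} \to 1$ to be an isomorphism, hence $X^{2n} \cong 1$. I would rule this out for $n \geq 1$ by the tangle-lifting technique of Theorem~\ref{thm: unique_decomposition_of_automorphisms}: applying $\mathrm{Pr} \from \cat C \to \cat C/\langle \rho_X \rangle$ and lifting along $S \from \cat T \to \cat C$ (Lemma~\ref{lem: functor_tangles_to_C}), an isomorphism $X^{2n} \to 1$ would produce an invertible tangle with $2n$ boundary points on one side and none on the other. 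Composing such a tangle with its inverse yields a diagram made solely of caps and cups, carrying no through-strand, which cannot be isotopic to the identity on $2n > 0$ strands. This contradiction shows $X^{2n} \not\cong 1$, so no $X^n$ with $n \geq 1$ is invertible.

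With this in hand the quasi-pivotal case collapses to the pivotal one: the invertible object $\beta$ of a quasi-pivotal structure must be $1$, so that $\beta \otimes \lbiDual{(\blank)} \otimes \lDual\beta = \Id_{\cat C}$ and $\rho_\beta$ is again a monoidal natural isomorphism of $\Id_{\cat C}$. It remains to classify these. By the monoidal axioms $\zeta_1 = \id_1$ and $\zeta_{X^n} = \zeta_X^{\otimes n}$, so $\zeta$ is completely determined by its component $\zeta_X \in \Aut{\cat C}{X}$. Specialising Theorem~\ref{thm: unique_decomposition_of_automorphisms} to $n = 1$, where $\Sym(1)$ is trivial, gives $\Aut{\cat C}{X} = \{\id_X, \rho_X\}$. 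Hence $\zeta_X \in \{\id_X, \rho_X\}$, forcing $\zeta \in \{\id, \rho\}$; both are genuine pivotal structures by Theorem~\ref{thm: counterexample_underlying_cat_properties}, so these are precisely the (quasi-)pivotal structures on $\cat C$.

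The main obstacle is the non-invertibility argument of the second paragraph. Every other step is immediate from the quoted results, but excluding an isomorphism $X^{2n} \cong 1$ genuinely requires the topological input encoded in the tangle category $\cat T$ and the through-strand count, which is exactly the machinery assembled for Theorem~\ref{thm: unique_decomposition_of_automorphisms}.
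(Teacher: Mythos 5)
Your proof is correct and follows the same route as the paper's: the unit is the only invertible object, so every quasi-pivotal structure is already pivotal, and pivotal structures are determined by their component at $X$, where Theorem~\ref{thm: unique_decomposition_of_automorphisms} gives $\Aut{\cat C}{X}=\{\id_X,\rho_X\}$. The only difference is that you supply an explicit tangle-theoretic justification (through-strand counting after lifting along $S \from \cat T \to \cat C$) for the claim that $1$ is the only invertible object of $\cat C$, a point the paper's proof asserts without argument.
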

\begin{proof}
  The only invertible object of \(\cat C\) is its monoidal unit, which implies that any quasi-pivotal structure on \(\cat C\) is already pivotal.
  The claim follows since these are determined by their value on \(X\) and, by Proposition~\ref{prop: unique_decomposition_of_automorphisms}, \(\Aut{\cat C}{X}=\{\id_X, \rho_X\}\).
\end{proof}

Let us now focus on the various ways in which we can equip an object \(Y\in \cat C\) with a half-braiding.
Our classification of automorphisms in \(\cat C\) allows us to easily verify that  on \(X\in \cat C\) there are four different half-braidings.
These are determined by
\begin{equation}\label{eq: half-braidings_on_X}
  \tikzsetnextfilename{types_of_braidings_1}%
  \input{sections/tikzfigures/types_of_braidings_1.tikz}%
 \quad\text{and}\quad
  \tikzsetnextfilename{types_of_braidings_2}%
  \input{sections/tikzfigures/types_of_braidings_2.tikz}%

\end{equation}

The fact that these braidings are distinguished by the appearances of \(\rho\) on the respective strings, motivates our next definition.

\begin{definition}\label{def: characteristic_sequence}
  Let \(f= f_s f_\phi \from X^n \to X^n\) be an automorphism in \(\cat C\).
  Its \emph{characteristic sequence} is
  \(\phi \defeq(\phi_1, \dots, \phi_n) \in \left( \mathbb Z_2 \right)^n\) with
  \begin{equation}\label{eq: characteristic_sequence}
    f_\phi = \rho_X^{\phi_1}\otimes \dots \otimes \rho_X^{\phi_n}.
  \end{equation}
\end{definition}

Indeed, it is the interplay between instances of \(\rho\) and the underlying permutation that determine whether an automorphism \(\chi_{Y,X}\from Y \otimes X \to X \otimes Y\) can be lifted to a half-braiding.

\begin{lemma}\label{lem: half-braidings_on_C}
  Any automorphism \(\chi_{Y, X} \from Y \otimes X \to X \otimes Y\) extends to a half-braiding on \(Y\) if and only if
  there exists an \(f \in \Aut{\cat C}{Y}\) with characteristic sequence \((\phi_1, \dots, \phi_n)\) and underlying permutation \(s \in \Sym(n)\) such that for all \(1 \leq i \leq n\)
  \begin{equation}\label{eq: involution_and_invariance_of_char_sequence}
    s^2(i) = i, \qquad \qquad \phi_{s(i)} = \phi_i,
  \end{equation}
  and \(\chi_{Y, X}= \sigma_{Y,X}(f\otimes \rho_X^j)\) for an integer \(j \in \mathbb Z_2\).
\end{lemma}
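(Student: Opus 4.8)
The plan is to exploit that $\cat C$ is monoidally generated by the single object $X$, so that a candidate half-braiding $\chi_{Y,\blank}$ on $Y = X^n$ is completely determined by its one component $\chi \defeq \chi_{Y,X} \in \Aut{\cat C}{X^{n+1}}$ via the hexagon axiom~\eqref{eq: hexagon_axiom_half_braiding}, which produces every $\chi_{Y,X^m}$ out of $\chi$. Such a family assembles into a half-braiding precisely when it is natural, and since naturality squares are closed under composition and tensoring, it is enough to test naturality against the four generating morphisms $\rho_X$, $\sigma_{X,X}$, $\ev_X$, $\coev_X$ of $\cat C$. I would therefore reduce the whole statement to an analysis of these four naturality equations for $\chi$.

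First I would handle naturality with respect to $\rho_X$. Writing $\chi = f_t f_\psi$ in the normal form of Theorem~\ref{thm: unique_decomposition_of_automorphisms} and abbreviating by $R_i$ the decoration $\rho_X$ on the $i$-th strand, the equation $(\rho_X \otimes \id_Y)\chi = \chi(\id_Y \otimes \rho_X)$ becomes $R_1 \chi = R_{t(n+1)}\chi$ after sliding the decoration through $t$; as $\chi$ is invertible this forces $t$ to carry the lone $X$-strand to the front, i.e.\ $t(n+1)=1$. Conversely, $t(n+1)=1$ lets me set $s(i) \defeq t(i)-1 \in \Sym(n)$ and, using uniqueness of the normal form together with the fact that $\sigma_{Y,X}$ is the pure shift permutation sending position $n+1$ to $1$, read off the factorisation $\chi = \sigma_{Y,X}(f \otimes \rho_X^j)$ with $f = f_s f_\phi$, $\phi_i = \psi_i$ and $j = \psi_{n+1}$. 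Thus $\rho_X$-naturality is \emph{equivalent} to the asserted factorisation.

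The heart of the argument is the identity $\chi_{Y,X^2} = \sigma_{Y,X^2}\,(f^2 \otimes \rho_X^j \otimes \rho_X^j)$. I would derive it from $\chi = \sigma_{Y,X}(f \otimes \rho_X^j)$ by expanding $\chi_{Y,X^2} = (\id_X \otimes \chi)(\chi \otimes \id_X)$ and pulling both copies of $f \otimes \rho_X^j$ to the source side through naturality of the genuine symmetric braiding $\sigma$ with respect to $f$ and $\rho_X$, combined with its hexagon $(\id_X \otimes \sigma_{Y,X})(\sigma_{Y,X}\otimes \id_X) = \sigma_{Y,X^2}$; the two copies of $f$ then compose to $f^2$ while each $X$-strand collects one $\rho_X^j$. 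Feeding this into the naturality equation for $\ev_X$, using $(\ev_X \otimes \id_Y)\sigma_{Y,X^2} = \id_Y \otimes \ev_X$ (naturality of the genuine braiding) and the cap relation $\ev_X(\rho_X^j \otimes \rho_X^j) = \ev_X$ coming from~\eqref{eq: everything_is_self_dual} and $\rho_X^2 = \id_X$, I obtain $(\ev_X \otimes \id_Y)\chi_{Y,X^2} = f^2 \otimes \ev_X$. Comparing with the required value $\id_Y \otimes \ev_X$ and transposing across the self-duality of $\blank \otimes X$ with itself, which is faithful by the snake identities, yields $f^2 = \id_Y$, that is $s^2 = \id$ and $\phi_{s(i)} = \phi_i$ for all $i$; the computation for $\coev_X$ is dual, and $\sigma_{X,X}$-naturality follows automatically from the same key identity together with the naturality of $\sigma_{X,X}$ with respect to $\rho_X$ in~\eqref{eq: rho_is_nat}.

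Assembling the pieces gives both implications: a half-braiding forces the factorisation and $f^2 = \id_Y$, while conversely, for any $f$ with $s^2 = \id$, $s$-invariant $\phi$, and any $j \in \mathbb Z_2$, the family generated from $\chi = \sigma_{Y,X}(f \otimes \rho_X^j)$ satisfies all four naturality conditions and is therefore a half-braiding on $Y$. I expect the main obstacle to be the third step: making the decoration-and-permutation bookkeeping through the braiding rigorous, in particular justifying the key identity $\chi_{Y,X^2} = \sigma_{Y,X^2}(f^2 \otimes \rho_X^j \otimes \rho_X^j)$ and the final faithfulness cancellation that upgrades $f^2 \otimes \ev_X = \id_Y \otimes \ev_X$ to $f^2 = \id_Y$.
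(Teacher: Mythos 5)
Your proof is correct and follows essentially the same route as the paper: decompose $\chi_{Y,X}$ via the normal form of Theorem~\ref{thm: unique_decomposition_of_automorphisms} as $\sigma_{Y,X}(f\otimes\rho_X^j)$, test naturality only against the generating morphisms, and extract the conditions $s^2=\id$ and $\phi_{s(i)}=\phi_i$ (equivalently $f^2=\id_Y$) from the compatibility with $\ev_X$ and $\coev_X$, with the converse checked generator by generator. Your explicit use of $\rho_X$-naturality to justify that the underlying permutation carries the lone $X$-strand to the front is a detail the paper's proof passes over silently, but it does not change the method.
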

\begin{proof}
  Assume \(\chi_{Y, X} \from Y \otimes X \to X \otimes Y\)  to induce a half-braiding on \(Y= X^n\).
  Due to Proposition~\ref{prop: unique_decomposition_of_automorphisms}, we can write \(\chi_{Y, X}= \sigma_{Y,X}(f\otimes \rho_X^j)\), where \(f\from Y \to Y\) is an automorphism of \(Y\) and \(j \in \mathbb Z_2\).
  Let \(\phi= (\phi_1, \dots, \phi_n)\) be the characteristic sequence of \(f\) and \(s\in \Sym(n)\) its underlying permutation.
  Write \(f_s\from Y \to Y\) for the morphism induced by \(s\) and set
  \begin{equation*}
    f_\phi = \rho_X^{\phi_1} \otimes \dots \otimes \rho_X^{\phi_n}, \qquad \qquad f_{s^{-1}(\phi)} = \rho_X^{\phi_{s^{-1}(1)}} \otimes \dots \otimes \rho_X^{\phi_{s^{-1}(n)}}.
  \end{equation*}
  We write \(W \defeq X^{n-1}\) and, using that \(f= f_s f_\phi\) plus the naturality of \(\chi_{Y, \blank}\) and Equation~\eqref{eq: everything_is_self_dual}, compute:
  \begin{equation}\label{eq: conditions_on_braiding}
  \tikzsetnextfilename{computation_conditions_for_braiding}%
  \input{sections/tikzfigures/computation_conditions_for_braiding.tikz}%

  \end{equation}
  This is equivalent to \(s\) being an involution and \(\phi\) being invariant under \(s\).

  Conversely, let \(\chi_{Y, X}= \sigma_{Y,X}(f\otimes \rho_X^j) \from Y \otimes X \to X \otimes Y\), where \(f\) is an automorphism satisfying the assumptions of the lemma.
  We extend it to a family of automorphisms \(\chi_{Y, \blank} \from Y \otimes \blank \to \blank \otimes Y\)  according to the hexagon axioms and verify its naturality on the generators of \(\cat C\).
  For \(\rho_X\) and \(\sigma_{X,X}\) this is immediate consequence of their respective naturality conditions.
  To prove the commutation relations between \(\chi_{Y, \blank}\), \(\coev_X\) and \(\ev_X\), we argue as in Equation~\eqref{eq: conditions_on_braiding}
\end{proof}

The previous lemma severely restricts the number of possibilities in which an automorphism of \(\cat C\) can lift to the centre \(\ZCat C\).

\begin{corollary}\label{cor: criterion_lifting_automorphisms}
  Consider an object \(X^n\in \cat C\) equipped with two half-braidings
  \begin{equation*}
    \chi_{X^n, X} = \sigma_{X^n,X}(f_sf_\phi\otimes \rho_X^j), \qquad \qquad
    \theta_{X^n, X} =\sigma_{X^n,X}(f_tf_\psi\otimes \rho_X^k).
  \end{equation*}
  If \(g= g_r g_\lambda \in \Aut{\cat C}{X^n}\) lifts to a morphism \(g \from (X^n, \chi_{X^n, \blank})\to (X^n, \theta_{X^n, \blank})\) of objects in the centre \(\ZCat C\) of \(\cat C\), then
  \begin{equation}\label{eq: restriction_for_lifts}
    \phi_{i}\lambda_{sr(i)} = \psi_{r(i)}\lambda_{r(i)}, \qquad \qquad \text{ for all }1 \leq i \leq n.
  \end{equation}
\end{corollary}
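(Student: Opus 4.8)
The plan is to turn the statement that $g$ lies in $\ZCat C$ into a single equation between automorphisms of $X^n$, and then to extract the constraint on characteristic sequences from the normal form of Theorem~\ref{thm: unique_decomposition_of_automorphisms}. First I would write out the defining relation for a morphism in the Drinfeld centre (Definition~\ref{def: centre_bimodue_category}), tested against the generator $X$, for $g \from (X^n, \chi_{X^n,\blank}) \to (X^n, \theta_{X^n,\blank})$:
\[
  (\id_X \otimes g)\,\chi_{X^n, X} = \theta_{X^n, X}\,(g \otimes \id_X).
\]
Substituting $\chi_{X^n,X} = \sigma_{X^n,X}(f_s f_\phi \otimes \rho_X^j)$ and $\theta_{X^n,X} = \sigma_{X^n,X}(f_t f_\psi \otimes \rho_X^k)$ and invoking the naturality of the braiding $\sigma$ from Theorem~\ref{thm: counterexample_underlying_cat_properties}, which gives $(\id_X \otimes g)\sigma_{X^n,X} = \sigma_{X^n,X}(g \otimes \id_X)$, the invertible morphism $\sigma_{X^n,X}$ cancels from both sides. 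What remains is an identity $g f_s f_\phi \otimes \rho_X^j = f_t f_\psi g \otimes \rho_X^k$ in $\cat C(X^{n+1}, X^{n+1})$; comparing the final strand via Theorem~\ref{thm: unique_decomposition_of_automorphisms} forces $j = k$ and reduces everything to the equation $g\, f_s f_\phi = f_t f_\psi\, g$ in $\Aut{\cat C}{X^n}$.

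The heart of the argument is then combinatorial. By Theorem~\ref{thm: unique_decomposition_of_automorphisms} every element of $\Aut{\cat C}{X^n}$ has a unique normal form $f_a f_\mu$, so that $\Aut{\cat C}{X^n}$ is a semidirect product of the abelian group $(\mathbb Z_2)^n$ by the symmetric group $\Sym(n)$. The multiplication in this group is governed by Relation~\eqref{eq: rho_is_nat}: transporting the block $f_\mu = \rho_X^{\mu_1} \otimes \dots \otimes \rho_X^{\mu_n}$ past a permutation $f_a$ merely relocates the copies of $\rho_X$, yielding a single rewriting rule $f_\mu f_a = f_a f_{\mu'}$ with $\mu'$ the reindexing of $\mu$ by $a$. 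I would establish this rule once and then use it to bring $g f_s f_\phi = g_r g_\lambda f_s f_\phi$ and $f_t f_\psi g = f_t f_\psi g_r g_\lambda$ into normal form. Equating the permutation parts yields the conjugacy relation tying $s$ and $t$ through $r$, while equating the resulting characteristic sequences strand by strand should produce, after the reindexing, exactly the relation~\eqref{eq: restriction_for_lifts}.

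The main obstacle is the bookkeeping in this last step. The indices $sr(i)$ and $r(i)$ in \eqref{eq: restriction_for_lifts} arise precisely from how the characteristic sequence of a composite gets reindexed by the underlying permutations when $\rho_X$'s are carried past braidings via~\eqref{eq: rho_is_nat}; fixing the correct direction of these reindexings, and tracking which copy of $\rho_X$ sits on which strand before and after each permutation, is where all the content lies. Once the rewriting rule is pinned down the comparison of normal forms is mechanical, but I would verify the indexing against the small cases $n = 1, 2$, where the four half-braidings of Equation~\eqref{eq: half-braidings_on_X} make the transport of $\rho_X$ fully explicit, before committing to the general formula.
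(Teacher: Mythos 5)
Your proposal follows the paper's proof essentially verbatim: test the centre-morphism condition against the generator $X$, cancel $\sigma_{X^n,X}$ by naturality of the braiding, reduce to a single equation between automorphisms of $X^n$ (with $j=k$ forced on the last strand), and compare normal forms via Theorem~\ref{thm: unique_decomposition_of_automorphisms}. The one ingredient you defer to ``bookkeeping'' that the paper invokes explicitly is the invariance $\phi_{s(i)}=\phi_i$ and $\psi_{t(i)}=\psi_i$ supplied by Lemma~\ref{lem: half-braidings_on_C} (because $\chi$ and $\theta$ are half-braidings), which together with the commutativity of $\mathbb Z_2$ converts the raw strand-by-strand identity into the exact form of Equation~\eqref{eq: restriction_for_lifts}.
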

\begin{proof}
  For the automorphism \(g= f_r f_\lambda \in \Aut{\cat C}{X^n}\) to lift to the centre it must satisfy
  \begin{equation*}
    \sigma_{X^n,X}(f_sf_\phi g\otimes \rho_X^j)
    = \chi_{X^n,X}(g\otimes \id_X)
    = (\id_X\otimes g)\theta_{X^n, X}
    = \sigma_{X^n,X}(g f_tf_\psi\otimes \rho_X^k).
  \end{equation*}
  This implies \(f_sf_\phi g = g f_tf_\psi\) and therefore \(\phi_{s(i)}\lambda_{sr(i)} = \lambda_{r(i)} \psi_{rt(i)}\) for all \(1\leq i \leq n\).
  Since \(\mathbb Z_2\) is abelian and \(\phi_{s(i)}= \phi_i\) as well as \(\psi_{t(i)}= \psi_{i}\), the claim follows.
\end{proof}

In view of Lemma~\ref{lem: half-braidings_on_C}, we state a slightly refined version of Definition~\ref{def: characteristic_sequence}.

\begin{definition}\label{def: characteristic_sequence_of_half_braiding.}
  Consider an object \(Y =(X^n, \chi_{X^n, X})\in \ZCat C\) whose half-braiding is defined by
  \(\chi_{X^n, X}= \sigma_{X^n,X}(f\otimes \rho_X^j)\) for an integer \(j \in \mathbb Z_2\).
  We call the characteristic sequence \(\phi\) of \(f\) the \emph{signature} of \(Y\).
\end{definition}

We now construct a pivotal structure on the centre of \(\cat C\) which differs from the lifts of \(\id\) and \(\rho\) from \(\cat C\) to \(\ZCat C\).

\begin{theorem}\label{thm: additional_pivotal_structure}
  The Drinfeld centre \(\ZCat C\) of \(\cat C\) admits a pivotal structure \(\zeta\) with
  \begin{subequations}
    \begin{gather}
      \zeta_{(X, \sigma^{\circ, \circ}_{X,\blank})}= \id_X, \qquad \qquad
      \zeta_{(X, \sigma^{\circ, \bullet}_{X,\blank})}= \id_X, \\
      \zeta_{(X, \sigma^{\bullet, \circ}_{X,\blank})}= \rho_X, \qquad \qquad
      \zeta_{(X, \sigma^{\bullet, \bullet}_{X,\blank})}= \rho_X.
    \end{gather}
  \end{subequations}
\end{theorem}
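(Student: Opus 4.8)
The plan is to first record that the target functor of a pivotal structure on $\ZCat C$ is trivial. By Theorem~\ref{thm: counterexample_underlying_cat_properties} the bidualising functor of $\cat C$ is the identity, so Equation~\eqref{eq: comp_braiding_n_duals} of Theorem~\ref{thm: rigdity_of_centre} gives $\sigma_{\lbiDual X, Y} = \sigma_{X,Y}$ for every $(X, \sigma_{X,\blank}) \in \ZCat C$ and all $Y$; hence $\lbiDual{(\blank)} = \Id_{\ZCat C}$, and a pivotal structure is exactly a monoidal natural automorphism of $\Id_{\ZCat C}$. It therefore suffices to construct such an automorphism with the prescribed values.

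I would define $\zeta$ through the signature of a half-braiding. By Lemma~\ref{lem: half-braidings_on_C} every object of $\ZCat C$ has the form $(X^n, \chi_{X^n,\blank})$ with $\chi_{X^n,X} = \sigma_{X^n,X}(f \otimes \rho_X^j)$; writing $\phi = (\phi_1, \dots, \phi_n)$ for its signature (Definition~\ref{def: characteristic_sequence_of_half_braiding.}), set
\[
  \zeta_{(X^n,\, \chi_{X^n,\blank})} \defeq \rho_X^{1+\phi_1} \otimes \dots \otimes \rho_X^{1+\phi_n}.
\]
On $X = X^1$ the four half-braidings are distinguished by $(\phi_1, j) \in (\mathbb Z_2)^2$, and, reading the first superscript as $\phi_1$ and the second as $j$, this formula returns $\rho_X$ when $\phi_1 = 0$ and $\id_X$ when $\phi_1 = 1$ (using $\rho_X^2 = \id_X$), matching the required values. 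Monoidality and the unit axiom are immediate, since the signature of a tensor product is the concatenation of signatures; invertibility is clear as each tensor factor is an isomorphism.

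The first substantive point is that $\zeta_{(X^n, \chi_{X^n,\blank})}$ is a morphism of $\ZCat C$, \ie an endomorphism commuting with $\chi$ in the sense of Equation~\eqref{eq: def_rel_morphism_in_centre}. Here I would push each factor $\rho_X^{1+\phi_i}$ through $\sigma_{X^n,\blank}$ using the naturality of the braiding and of $\rho$, and cancel the resulting powers of $\rho_X$ against the $\rho_X^{\phi_i}$ already present in $f = f_s f_\phi$; the relations $\rho_X^2 = \id_X$ and the invariance $s(\phi) = \phi$ from Lemma~\ref{lem: half-braidings_on_C} make everything close up. The guiding computation is $n = 1$, $Y = X$, where both $(\id_X \otimes \rho_X^{1+\phi_1})\,\chi_{X,X}$ and $\chi_{X,X}\,(\rho_X^{1+\phi_1} \otimes \id_X)$ reduce to $\sigma_{X,X}(\rho_X \otimes \rho_X^j)$; the general case follows by extending along the hexagon axiom.

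Naturality is where I expect the real work. For a morphism $g \from (X^n, \chi) \to (X^m, \theta)$ of $\ZCat C$ I must verify $g\,\zeta_{(X^n,\chi)} = \zeta_{(X^m,\theta)}\,g$, recalling $\lbiDual g = g$. When $g = g_r g_\lambda$ is an automorphism this is precisely Corollary~\ref{cor: criterion_lifting_automorphisms}: its relation $\phi_i \lambda_{sr(i)} = \psi_{r(i)} \lambda_{r(i)}$, together with the centrality of $\rho_X$ in $\cat C$ and the fact that $g_r$ permutes the tensor factors, rearranges the left-hand side into the right-hand side. For the remaining generators, the evaluations and coevaluations, I would invoke that $\rho$ is a natural transformation of $\cat C$ and the self-duality relations $\lDual \rho_X = \rho_X$ and $\ev_X(\rho_X \otimes \rho_X) = \ev_X$ from Theorem~\ref{thm: counterexample_underlying_cat_properties}: the hypothesis that such a cap or cup is a centre morphism forces the signatures of the paired strands to agree, which is exactly what lets the strand-wise powers $\rho_X^{1+\phi_i}$ be absorbed. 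The main obstacle is thus this reconciliation of the strand-wise powers of $\rho$ with the permutation and duality data of an arbitrary centre morphism; once it is organised via Corollary~\ref{cor: criterion_lifting_automorphisms} and the self-duality identities, $\zeta$ is seen to be a monoidal natural isomorphism, hence the desired pivotal structure.
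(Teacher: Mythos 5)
Your proof follows essentially the same route as the paper's: define $\zeta$ componentwise through the signature of the half-braiding, obtain monoidality from the fact that signatures concatenate under tensor products, and verify naturality generator by generator, using Corollary~\ref{cor: criterion_lifting_automorphisms} for the automorphism generators and the self-duality relations to force the two paired strands of $\ev_X$ and $\coev_X$ to carry equal signature entries. The one deviation is your exponent $1+\phi_i$ in place of the paper's $\phi_i$: the two candidates differ by composition with the lift of $\rho$ to $\ZCat C$, and while both are pivotal structures distinct from the lifts of $\id$ and $\rho$ (so either suffices for Theorem~\ref{thm: non_induced_pivotal_structure}), only one reproduces the displayed values, depending on which of $\circ,\bullet$ marks the presence of $\rho_X$ in Equation~\eqref{eq: half-braidings_on_X}. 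Your explicit check that each component $\zeta_Y$ commutes with the half-braiding, i.e.\ is genuinely a morphism of $\ZCat C$, is a point the paper leaves implicit.
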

\begin{proof}
  For any object \(Y\in \ZCat C\) we define
  \begin{equation*}
    \zeta_Y = \rho_X^{\phi_1}\otimes\dots\otimes \rho_X^{\phi_n}, \qquad \text{ where \(\phi=(\phi_1, \dots, \phi_n)\) is the signature of \(Y\)}.
  \end{equation*}
  Since the signature \(\varphi\) of a tensor product \(Y\otimes W\) of objects \(Y, W\in \ZCat C\) is given by concatenating the signatures \(\phi\) of \(Y\) and \(\psi\) of \(W\), this defines a family of isomorphisms \(\zeta \from \Id_{\ZCat C} \to \Id_{\ZCat C}\), which is compatible with the monoidal structure.
  It therefore only remains to prove the naturality of \(\zeta\).
  This can be verified by considering all possible lifts of identities and generators of \(\cat C\) to its Drinfeld centre.
  For \(\id_X, \rho_X \from X \to X\) and \(\sigma_{X,X}\from X^2 \to X^2\), this follows by Corollary~\ref{cor: criterion_lifting_automorphisms}.
  To study the coevaluation of \(X\), we fix a half-braiding \(\chi_{X^2, \blank}\from X^2 \otimes \blank \to \blank \otimes X^2\) on \(X^2\).
  Due to Lemma~\ref{lem: half-braidings_on_C}, it is determined by
  \begin{equation*}
    \chi_{X^2, X}= \sigma_{X^2,X}(\sigma_{X,X}^i(\rho_X^j \otimes \rho_X^k)\otimes \rho_X^l), \qquad \text{ where } i,j,k,l \in \mathbb Z_2.
  \end{equation*}
  Now suppose, \(\coev_X \from 1 \to X^2\) lifts to a morphism in \(\ZCat C\), where \(X^2\) is equipped with this half-braiding.
  Relation~\eqref{eq: everything_is_self_dual} together with the self-duality of \(\sigma_{X,X}\) imply \(\sigma_{X,X} \coev_X = \coev_X\) and \(\ev_X \sigma_{X,X} = \ev_X\), which allows us to compute:
  \begin{equation*}
  \tikzsetnextfilename{relation_coev_lift_centre}%
  \input{sections/tikzfigures/relation_coev_lift_centre.tikz}%

  \end{equation*}
  Therefore \(j=k\) and \(\zeta_{(X^2, \chi_{X^2, \blank})} = \id_X^2\) or \(\zeta_{(X^2, \chi_{X^2, \blank})} = \rho_X^2\), from which the desired naturality condition follows.
  A similar argument for the evaluation of \(X\) concludes the proof.
\end{proof}

By Corollary~\ref{cor: piv_on_C}, the Picard heap of \(\ACat C\) can have at most two elements.
However, the above theorem constructs a third pivotal structure on \(\ZCat C\).
This implies our desired result:

\begin{theorem}\label{thm: non_induced_pivotal_structure}
  The pivotal structure \(\zeta\) of \(\ZCat C\) is not induced by the Picard heap of \(\ACat C\).
  In particular, the map \(\iota \from \Pic \ACat C / \Pic \SZCat C \to \Piv \ZCat C\)
  is not surjective.
\end{theorem}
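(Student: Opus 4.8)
The plan is to show that $\zeta$ lies outside the image of $\iota$. By the commuting triangle of Theorem~\ref{thm: equiv_establishes_piv} together with the surjectivity of $\pi$, we have $\operatorname{im}\iota = \operatorname{im}\kappa$, so it suffices to verify that $\zeta \neq \rho_A$ for every class $[A] \in \Pic \ACat C$.

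First I would describe $\Pic \ACat C$ concretely. By Corollary~\ref{cor: piv_on_C} the unit $1$ is the only invertible object of $\cat C$, so every class in $\Pic \ACat C$ is represented by an object of the form $A = (1, \sigma_{1, \blank})$, whose half-braiding is a natural isomorphism $\sigma_{1, X} \from 1 \otimes \lbiDual X \to X \otimes 1$. Reading this through the identification $\lbiDual{(\blank)} = \Id_{\cat C}$ of Theorem~\ref{thm: counterexample_underlying_cat_properties}, and unwinding the hexagon identity~\eqref{eq: hexagon_axiom_half_braiding}, the half-braiding $\sigma_{1,\blank}$ becomes a monoidal natural endo-isomorphism of $\Id_{\cat C}$, that is, a (quasi-)pivotal structure. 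Hence $\sigma_{1,\blank}$ equals either $\id$ or $\rho$ by Corollary~\ref{cor: piv_on_C}; in particular, in accordance with the bijection of Lemma~\ref{lem: quasi_piv_vs_inv_ayds}, the heap $\Pic \ACat C$ has at most two elements.

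The heart of the argument is the computation of $\rho_A$ for such an $A$. I claim that for $\alpha = 1$ the entwinement of Diagram~\eqref{eq: def_of_nat_iso} collapses: the coevaluation $\coev^l_1 \from 1 \to 1 \otimes \lDual 1$ is the identity, and each occurrence of the half-braiding of a centre object $X \in \ZCat C$ in~\eqref{eq: def_of_nat_iso} is evaluated at $\alpha = 1$ or at $\omega = \lDual\alpha = 1$, hence is trivial by the unit-normalisation $\sigma_{X, 1} = \id_X$. The only surviving ingredient is the half-braiding $\sigma_{1, X}$ of $A$ itself, which by the previous paragraph is $p_X$ for some $p \in \{\id, \rho\}$. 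I would therefore conclude that $\rho_{A, (X, \chi_X)} = p_X$ for every half-braiding $\chi_X$ on $X$; in other words, $\rho_A$ is the naive lift of $p$ and takes one and the same value on all objects of $\ZCat C$ lying over the underlying object $X$. This is the step I expect to be the main obstacle, since it requires tracing Diagram~\eqref{eq: def_of_nat_iso} carefully, but it reduces entirely to the two triviality facts $\coev^l_1 = \id_1$ and $\sigma_{X,1} = \id_X$.

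Finally I would contrast this with $\zeta$. By Theorem~\ref{thm: additional_pivotal_structure} the structure $\zeta$ is \emph{not} constant over the objects lying above $X$: it returns $\rho_X$ on $(X, \sigma^{\circ,\circ}_{X,\blank})$ yet $\id_X$ on $(X, \sigma^{\bullet,\circ}_{X,\blank})$. Each $\rho_A$, on the other hand, is constant there by the preceding paragraph---identically $\id_X$ when $p = \id$ and identically $\rho_X$ when $p = \rho$. Consequently $\zeta$ agrees with neither induced structure, whence $\zeta \notin \operatorname{im}\kappa = \operatorname{im}\iota$. As $\zeta \in \Piv \ZCat C$, the map $\iota$ fails to be surjective.
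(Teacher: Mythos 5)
Your proposal is correct and follows essentially the same route as the paper: Corollary~\ref{cor: piv_on_C} bounds $\Pic \ACat C$ by two elements whose induced structures are the constant lifts of $\id$ and $\rho$, and Theorem~\ref{thm: additional_pivotal_structure} exhibits $\zeta$ as non-constant on the fibre over $X$, hence not induced. The only difference is one of presentation: you spell out the ``collapse'' computation showing $\rho_{A,(X,\chi_X)}$ is independent of the half-braiding $\chi_X$ when $\alpha=1$, which the paper only sketches in the ``key observation'' paragraph preceding its construction of $\cat C$.
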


Let us conclude this section by stating that we deem the question interesting under which conditions on a rigid category \(\cat C\), the map \(\iota \from \Pic \ACat C / \Pic \SZCat C \to \Piv \ZCat C\) is surjective.


\section[Bimodule and comodule monads]{Bimonads and comodule monads as coordinate systems for (twisted) centres}\label{sec: monads}
Bimonads and Hopf monads are a vast generalisation of bialgebras and Hopf algebras, respectively.
They naturally arise in the study of (rigid) monoidal categories and topological quantum field theories, see amongst others \cite{Kerler2001, Moerdijk2002, Bruguieres2007, Bruguieres2011, Turaev2017}.
While there are several, sometimes non-equivalent, notions of `Hopf monad', see \cite{Boardman1995,Mesablishvili2011}, we follow the approach of \cite{Bruguieres2007}.

A monadic interpretation of module categories was given by Aguiar and Chase under the name `comodule monad', see \cite{Aguiar2012}.
In this section, we recall some aspects of their theory needed to obtain a monadic version of the results in Section~\ref{sec: pivotality_of_the_df_centre}.

\subsection{Bimonads and monoidal categories}

Due to the lack of a braiding on the endofunctors \(\Endspace(\cat C)\) over \(\cat C\), the naïve notion of bialgebras does not generalise to the monadic setting and needs to be adjusted.
One possible way of overcoming this problem was introduced and studied by Moerdijk under the name `Hopf monads'\footnote{As remarked in \cite{Moerdijk2002}, the concept of Hopf monads is strictly dual to that of monoidal comonads, which are studied for example in \cite{Boardman1995}.} in \cite{Moerdijk2002}; the idea being that the coherence morphisms of an oplax monoidal functor \((T, \Delta, \varepsilon) \from \cat C \to \cat D\), see Definition~\ref{def: mon_functor}, serve as its `comultiplication' and `counit'.
Following the conventions of \cite{Bruguieres2007} we refer to such structures as bimonads.

Modules and their morphisms over a monad \(T\) on \(\cat C\) form the category \(\cat C^T\) of \emph{\(T\)-modules}\footnote{
  In the literature, modules over \(T\) are also referred to as \emph{\(T\)-algebras} and \(\cat C^T\) is called the \emph{\EilenbergMoore category} of \(T\).
  The intention behind our conventions is to have a closer similarity to (Hopf) algebraic notions.
}.
The \emph{free} and \emph{forgetful functor} of \(T\) are
\begin{equation*}
  F^T \from \cat C \to \cat C^T,\quad F^T(M) = ( T(M), \mu_M^{(T)}) \quad \text{ and } \quad
  U^T \from \cat C^T \to \cat C,\quad U^T(M, \vartheta_{M})= M.
\end{equation*}
They constitute the \emph{\EilenbergMoore adjunction} \(\adj{F^T}{U^T}{\cat C}{\cat C^T}\).

Let \(T\) be the monad of the adjunction \stdadj.
In the spirit of our previous remark, we might ask how much the functors \(F\) and \(U\) `differ' from the free and forgetful functors \(F^T \from \cat C \to \cat C^T\) and \( U^T \from \cat C^T \to \cat C\)  of \(T\), respectively.
Roughly summarised we are interested in the following:
\begin{center}
  \begin{tikzcd}
    \cat D \arrow[rrdd, "U", bend left] &  &                                                                        &  & \cat C^T \arrow[llll, "`compare'"', no head, dashed, bend right] \arrow[lldd, "U^T", bend left] \\
    &  &                                                                        &  &                                                                                          \\
    &  & \cat C \arrow[lluu, "F", bend left] \arrow[rruu, "F^T", bend left] &  &
  \end{tikzcd}
\end{center}

\begin{definition}\label{def: comparison_functor}
  Let \(T \defeq UF\) be the monad of the adjunction \stdadj.
  The \emph{comparison functor} is the unique functor \(\Sigma \from \cat D \to \cat C^T\) satisfying
  \begin{equation}\label{eq: conditions_comparison_functor}
    \Sigma F = F^T \qquad \qquad \text{ and } \qquad \qquad
    U^T \Sigma = U.
  \end{equation}
  On objects it is given by
  \begin{equation}\label{eq: comparison_functor_on_objects}
    \Sigma(X) = (U(X), U(\epsilon_{X})), \qquad \qquad \text{ for all } X \in \cat D.
  \end{equation}
\end{definition}

We call an adjunction \emph{monadic} if its comparison functor is an equivalence.

\begin{definition}\label{def: bimonad}
  A \emph{bimonad} on a monoidal category \(\cat C\) is an oplax monoidal endofunctor \((B, \Delta, \varepsilon) \from \cat C \to \cat C\) together with oplax monoidal natural transformations \(\mu \from B^2 \to B\) and \(\eta: \Id_{\cat C} \to B\) implementing a monad structure on \(B\).

  A \emph{morphism of bimonads} is a natural transformation \(f \from B \to H\) between bimonads which is oplax monoidal as well as a morphism of monads.
\end{definition}

\begin{remark}\label{rem: bimonads_lead_to_monoidal category}
  Despite this terminology not being standard, it can be justified by representation theoretic considerations.
  Under \emph{Tannaka--Krein reconstruction}, see \cite[Chapter 5]{Etingof2015}, the comultiplication and counit of a bialgebra correspond to a tensor product and unit on its category of modules.
  Similarly, given a bimonad \((B, \mu ,\eta, \Delta, \varepsilon)\from \cat C \to \cat C\) and two  modules \((M, \vartheta_{M}), (N, \vartheta_{N}) \in \cat C^B\)  we set
  \begin{equation}\label{eq: tensor_product_of_modules_over_bimonad}
    \big(M, \vartheta_{M}\big) \otimes \big(N, \vartheta_N\big) \defeq \big(M \otimes N, (\vartheta_{M}\otimes \vartheta_N) \Delta_{M, N} \big).
  \end{equation}
  Moreover, we define \(\vartheta_{1} \from B(1) \to 1\).
  The coassociativity and counitality of the comultiplication of \(B\) imply that the above construction implements a monoidal structure on \(\cat C^B\), parallel to that on the modules over a bialgebra.
\end{remark}

Going further, we can incorporate rigidity into this picture.
In view of \cite[Theorem~3.8]{Bruguieres2007}, we state:

\begin{definition}\label{def: Hopf_monad}
  A bimonad \(H\from \cat C \to \cat C\) on a rigid category \(\cat C\) is called a \emph{Hopf monad} if its category of modules \(\cat C^H\) is rigid.
\end{definition}

\begin{remark}\label{rem: left_and_right_antipode}
  The rigidity of the modules \(\cat C^{H}\) of a Hopf monad \(H\from \cat C \to \cat C\) is reflected by the existence of two natural transformations
  \begin{equation}\label{eq: left_and_right_antipode}
    s^{l}_{X}\from H(\lDual{H(X)})\to \lDual H, \qquad
    s^{r}_{X}\from H(\rDual{H(X)})\to \rDual H, \qquad \text{ for all }X \in \cat C,
  \end{equation}
  called the \emph{left} and \emph{right antipode} of \(H\).
  In Example~2.4 of \cite{Bruguieres2012} it is explained how these generalise the antipode of a Hopf algebra.
\end{remark}

The intricate interplay between monads and adjunctions transcends to monoidal categories and bimonads.
Suppose \stdadj to be an oplax monoidal adjunction between \(\cat C\) and \(\cat D\).
The monad of the adjunction \(UF \from \cat C \to \cat C\) is a bimonad whose comultiplication is defined for every \(X,Y \in \cat C\) as the composition
\begin{equation}\label{eq: comult_for_bimonad_from_adjunction}
  UF(X \otimes Y)
  \trightarrow{U\big(\Delta^{(F)}_{X,Y}\big)}
  U(F(X)\otimes F(Y))
  \trightarrow{\Delta^{(U)}_{F(X),F(Y)}}
  UF(X)\otimes UF(Y).
\end{equation}
Its counit is
\begin{equation}\label{eq: counit_for_bimonad_from_adjunction}
  UF(1)
  \trightarrow{U \big( \varepsilon^{(F)}\big)}
  U(1)
  \trightarrow{\varepsilon^{(U)}}
  1.
\end{equation}

The next result is a slightly simplified version of \cite[Lemma~7.10]{Turaev2017}.

\begin{lemma}\label{lem: monoidal_adjunctions_vs_bimonads}
  Let \stdadj be a pair of adjoint functors between two monoidal categories.
  The adjunction \(F \adjoint U\) is monoidal if and only if \(U\) is a strong monoidal functor.
  That is, the coherence morphisms of \(U\) are invertible.
\end{lemma}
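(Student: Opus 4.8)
The forward implication is immediate from Definition~\ref{def: monoidal_adjunction}: a monoidal adjunction has both $F$ and $U$ strong monoidal, so in particular $U$ is strong. The substance is the converse, which I would treat as an instance of Kelly's doctrinal adjunction via the mate calculus. So assume $F \adjoint U$ with unit $\eta \from \Id_{\cat C} \to UF$ and counit $\epsilon \from FU \to \Id_{\cat D}$, and suppose $U$ is strong monoidal, meaning its comultiplication $\Delta^{(U)}$ is invertible, with inverse $\mu^{(U)} \defeq (\Delta^{(U)})^{-1}$, and its counit $\varepsilon^{(U)}$ is invertible. The goal is to equip $F$ with a compatible structure and show that structure is strong, so that the two functors and the (now monoidal) unit and counit assemble into a monoidal adjunction.

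The first step is to pin down the oplax structure of $F$ as the mate of the lax structure $\mu^{(U)}$ of $U$. Writing out Definition~\ref{def: mon_nat_trafo} for $\eta$ (the identity functor carrying the trivial structure) and using $\Delta^{(UF)}_{X,Y} = \Delta^{(U)}_{FX,FY}\, U(\Delta^{(F)}_{X,Y})$, invertibility of $\Delta^{(U)}$ yields
\begin{equation*}
	U(\Delta^{(F)}_{X,Y})\, \eta_{X \otimes Y} = \mu^{(U)}_{FX,FY}\,(\eta_X \otimes \eta_Y),
\end{equation*}
and transposing across $F \adjoint U$ gives the explicit doctrinal formula $\Delta^{(F)}_{X,Y} = \epsilon_{FX \otimes FY}\, F\big(\mu^{(U)}_{FX,FY}(\eta_X \otimes \eta_Y)\big)$. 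The analogous bookkeeping for $\epsilon$ produces $\epsilon_{A\otimes B} = (\epsilon_A \otimes \epsilon_B)\, \Delta^{(F)}_{UA,UB}\, F(\Delta^{(U)}_{A,B})$, and the counitality clauses give $\varepsilon^{(U)}\, U(\varepsilon^{(F)})\,\eta_1 = \id_1$ together with $\varepsilon^{(F)} = \epsilon_1\, F\big((\varepsilon^{(U)})^{-1}\big)$.

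The crucial step is to exhibit a two-sided inverse of $\Delta^{(F)}_{X,Y}$. Since $U$ is strong, its oplax comultiplication $\Delta^{(U)}$ is itself invertible and, regarded as a structure on the right adjoint, has a mate that is a lax monoidal structure $\mu^{(F)}_{X,Y} \from FX \otimes FY \to F(X \otimes Y)$ on $F$. I would define $\mu^{(F)}$ by this mate and verify $\mu^{(F)}_{X,Y}\, \Delta^{(F)}_{X,Y} = \id_{F(X\otimes Y)}$ and $\Delta^{(F)}_{X,Y}\, \mu^{(F)}_{X,Y} = \id_{FX \otimes FY}$ through a diagram chase feeding the mate identities above into the triangle identities~\eqref{eq: left_adjunction_identity} and~\eqref{eq: right_adjunction_identity}, together with naturality of $\eta$, $\epsilon$, $\Delta^{(F)}$ and the relation $\Delta^{(U)}\mu^{(U)} = \id = \mu^{(U)}\Delta^{(U)}$. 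The unit constraint is shorter: invertibility of $\varepsilon^{(U)}$ combined with the two counitality identities lets one read off an inverse of $\varepsilon^{(F)}$ directly. Once $F$ is strong, the hypothesis that $\eta$ and $\epsilon$ are oplax monoidal upgrades to genuine monoidality, and the adjunction is monoidal.

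The main obstacle I anticipate is exactly this inverse verification: essentially all the content lies in the mate-versus-triangle-identity cancellations. One further point requiring care is that the paper's convention demands both $F$ and $U$ be oplax, rather than the asymmetric oplax-left/lax-right convention of standard doctrinal adjunction; this is harmless because a strong $U$ is simultaneously lax and oplax, so both mates $\mu^{(F)}$ and $\Delta^{(F)}$ are available and turn out mutually inverse. I would carry out the inverse check in string-diagram form to keep the triangle-identity cancellations legible.
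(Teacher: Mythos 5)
Your mate formula for $\Delta^{(F)}$ is the right construction, but the step you yourself flag as crucial---exhibiting a two-sided inverse of $\Delta^{(F)}$ so that $F$ becomes strong monoidal---is not merely hard, it is impossible: the oplax structure induced on a left adjoint by a strong monoidal right adjoint is in general not invertible. Note first that the paper gives no proof of this lemma; it cites it as a simplified form of \cite[Lemma~7.10]{Turaev2017}, and that result concerns what Definition~\ref{def: monoidal_adjunction} calls an \emph{oplax} monoidal adjunction: if $U$ is strong monoidal, then $F$ acquires a unique \emph{oplax} structure (precisely your formula $\Delta^{(F)}_{X,Y} = \epsilon_{F(X)\otimes F(Y)}\, F\bigl((\Delta^{(U)}_{F(X),F(Y)})^{-1}(\eta_X \otimes \eta_Y)\bigr)$) making $\eta$ and $\epsilon$ oplax monoidal; no invertibility of $\Delta^{(F)}$ is asserted. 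The word ``monoidal'' in the lemma is a wording slip of the paper, as its application immediately afterwards shows: the lemma is invoked for the adjunction $F_B \adjoint U_B$ of a bimonad $B \from \cat C \to \cat C$ (see Lemma~\ref{lem: comparison_functor_bimonad_monoidal}), where $U_B$ is strict monoidal but the induced oplax structure on $F_B$ is the comultiplication $\Delta_{X,Y} \from B(X \otimes Y) \to B(X) \otimes B(Y)$ of $B$. For $B = k[G] \otimes \blank$ on vector spaces with $|G|>1$ this morphism is not even a map between objects of equal dimension. So the identities $\mu^{(F)}\Delta^{(F)} = \id$ and $\Delta^{(F)}\mu^{(F)} = \id$ that you intend to verify by diagram chase are false, and no combination of triangle identities will produce them. (A secondary flaw: your $\mu^{(F)}$, ``the mate of $\Delta^{(U)}$'', is not available at all---mates exchange oplax structures on the left adjoint with lax structures on the right adjoint only; adjunction transposition applies to morphisms out of $F(\blank)$, and $F(X)\otimes F(Y)$ is not of that form.)

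A proof matching the intended statement has a different shape. For the direction you concentrate on: define $\Delta^{(F)}$ and $\varepsilon^{(F)} \defeq \epsilon_1 F\bigl((\varepsilon^{(U)})^{-1}\bigr)$ as you do, verify the coherence axioms \eqref{eq: monoidal_fun_compatible_w_tp} and \eqref{eq: monoidal_fun_compatible_w_unit}, and verify that $\eta$ and $\epsilon$ satisfy Definition~\ref{def: mon_nat_trafo}---this is where \eqref{eq: left_adjunction_identity} and \eqref{eq: right_adjunction_identity} do their work---and then stop: the conclusion is an oplax monoidal adjunction. For the other direction, which you dismissed as immediate, there is genuine content under the correct reading: in an oplax monoidal adjunction the coherence morphisms of $U$ are automatically invertible, the inverse of $\Delta^{(U)}_{A,B}$ being $U(\epsilon_A \otimes \epsilon_B)\, U\bigl(\Delta^{(F)}_{U(A),U(B)}\bigr)\, \eta_{U(A)\otimes U(B)}$, with the verification again resting on the oplax monoidality of $\eta$ and $\epsilon$ together with the triangle identities. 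In short: the invertibility argument in this lemma lives entirely on the $U$ side, never on the $F$ side.
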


Let \(B \from \cat C \to \cat C\) be the bimonad arising from the monoidal adjunction \stdadj.
Since the forgetful functor \(U^B \from \cat C^B \to \cat C\) is strict monoidal, the adjunction \(F^B \adjoint U^B\) is monoidal by the above lemma.
This raises the question whether the comparison functor, mediating between the two adjunctions, is compatible with this additional structure.
Due to \cite{kelly74}, see also \cite[Theorem~2.6]{Bruguieres2007}, we have the following result.

\begin{lemma}\label{lem: comparison_functor_bimonad_monoidal}
  Let \stdadj be a monoidal adjunction and write \(B \from \cat C \to \cat C\) for its induced bimonad.
  The comparison functor \(\Sigma \from \cat D \to \cat C^B\) is strong monoidal and
  \(U^B \Sigma = U\) as well as \(\Sigma F = F^B\) as strong, respectively, oplax monoidal functors.
\end{lemma}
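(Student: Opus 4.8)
The plan is to transport the monoidal structure of $U$ across the comparison functor. Since $F \adjoint U$ is a monoidal adjunction, Lemma~\ref{lem: monoidal_adjunctions_vs_bimonads} guarantees that $U$ is strong monoidal, so its coherence morphisms $\Delta^{(U)}_{X,Y} \from U(X \otimes Y) \to U(X) \otimes U(Y)$ and $\varepsilon^{(U)} \from U(1) \to 1$ are isomorphisms. Because the forgetful functor $U_B \from \cat C^B \to \cat C$ is strict monoidal and faithful, the requirement $U_B \Sigma = U$ as monoidal functors forces the coherence morphisms of $\Sigma$ to have underlying morphisms $\Delta^{(U)}_{X,Y}$ and $\varepsilon^{(U)}$. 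I would therefore declare these to be the coherence morphisms of $\Sigma$ and verify in turn that (i) they are morphisms in $\cat C^B$, (ii) they are isomorphisms, and (iii) they satisfy the coherence axioms \eqref{eq: monoidal_fun_compatible_w_tp} and \eqref{eq: monoidal_fun_compatible_w_unit}.

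The crux is (i). Recall from \eqref{eq: comparison_functor_on_objects} that $\Sigma(X) = (U(X), U(\epsilon_X))$ and that, by \eqref{eq: tensor_product_of_modules_over_bimonad}, the $B$-action on $\Sigma(X) \otimes \Sigma(Y)$ is $(U(\epsilon_X) \otimes U(\epsilon_Y))\, \Delta^{(B)}_{U(X), U(Y)}$. Expanding $\Delta^{(B)}$ through its definition \eqref{eq: comult_for_bimonad_from_adjunction}, applying naturality of $\Delta^{(U)}$ to the counit components $\epsilon_X$, $\epsilon_Y$, and recognising the composite $\Delta^{(F)}_{U(X),U(Y)}\, F(\Delta^{(U)}_{X,Y})$ as the comultiplication $\Delta^{(FU)}_{X,Y}$ of the strong monoidal endofunctor $FU \from \cat D \to \cat D$, the module-morphism condition \eqref{eq: compatibility_monad_module_morphism_with_action} for $\Delta^{(U)}_{X,Y}$ reduces to the single identity
\begin{equation*}
	(\epsilon_X \otimes \epsilon_Y)\, \Delta^{(FU)}_{X,Y} = \epsilon_{X \otimes Y}.
\end{equation*}
This is exactly the statement that the counit $\epsilon \from FU \to \Id_{\cat D}$ is a monoidal natural transformation (Definition~\ref{def: mon_nat_trafo}), which holds by hypothesis. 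The analogous verification for $\varepsilon^{(U)}$ uses the counit half of that definition, $\epsilon_1 = \varepsilon^{(FU)}$, together with the definition \eqref{eq: counit_for_bimonad_from_adjunction} of the bimonad counit; this is where the monoidality of the adjunction is essential.

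For (ii), the underlying morphisms $\Delta^{(U)}_{X,Y}$ and $\varepsilon^{(U)}$ are isomorphisms in $\cat C$, and since $U_B$ is conservative a $B$-module morphism that is invertible on underlying objects is invertible in $\cat C^B$; hence the coherence morphisms of $\Sigma$ are isomorphisms. For (iii), the coherence axioms are equalities of morphisms in $\cat C^B$; applying the faithful strict monoidal functor $U_B$ turns them into the corresponding axioms for $U$, which hold, so by faithfulness they hold in $\cat C^B$. This establishes that $\Sigma$ is strong monoidal and, by construction, that $U_B \Sigma = U$ as strong monoidal functors.

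Finally, for $\Sigma F = F_B$ as oplax monoidal functors I would note that the two already agree as plain functors by \eqref{eq: conditions_comparison_functor}, so it remains to compare coherence morphisms. Applying the faithful $U_B$ to the comultiplication of the composite $\Sigma F$ at objects $V, W \in \cat C$ yields $\Delta^{(U)}_{F(V), F(W)}\, U(\Delta^{(F)}_{V,W})$, which is precisely the morphism $\Delta^{(B)}_{V,W}$ underlying the comultiplication of $F_B$ by \eqref{eq: comult_for_bimonad_from_adjunction}; the counits match in the same way via \eqref{eq: counit_for_bimonad_from_adjunction}. Faithfulness of $U_B$ then gives the equality in $\cat C^B$. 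The main obstacle is the bookkeeping in step (i): correctly threading the naturality of $\Delta^{(U)}$ and the two defining composites so that the monoidal naturality of $\epsilon$ can be invoked; everything else is a formal consequence of $U_B$ being strict, faithful and conservative.
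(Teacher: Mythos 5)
Your argument is correct. Note that the paper itself does not prove this lemma---it is imported wholesale as a citation of \cite[Theorem~2.6]{Bruguieres2007}---so there is no in-paper proof to compare against; what you have written is essentially the standard verification underlying that reference. The structure is sound: you correctly identify that $U_B\Sigma = U$ together with strictness and faithfulness of $U_B$ leaves no choice for the coherence data of $\Sigma$, and that the only non-formal point is showing $\Delta^{(U)}_{X,Y}$ and $\varepsilon^{(U)}$ are morphisms of $B$-modules. Your reduction of the module-morphism condition to $(\epsilon_X \otimes \epsilon_Y)\,\Delta^{(FU)}_{X,Y} = \epsilon_{X\otimes Y}$ checks out: expanding $\Delta^{(B)}_{U(X),U(Y)}$ via Equation~\eqref{eq: comult_for_bimonad_from_adjunction}, sliding $U(\epsilon_X)\otimes U(\epsilon_Y)$ past $\Delta^{(U)}_{FU(X),FU(Y)}$ by naturality, and absorbing $F(\Delta^{(U)}_{X,Y})$ into $\Delta^{(FU)}_{X,Y}$ does land exactly on the oplax monoidality of the counit, which is part of Definition~\ref{def: monoidal_adjunction}. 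The remaining points---invertibility via conservativity of $U_B$, the coherence axioms and the identity $\Sigma F = F_B$ via faithfulness of $U_B$---are handled correctly. One small item you leave implicit is the naturality of the coherence morphisms of $\Sigma$ as a transformation in $\cat C^B$, but this follows by the same faithfulness argument you already deploy for the coherence axioms, so nothing is missing in substance.
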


The question to which extend the monoidal structure on \(\cat C^B\) is unique
was answered by Moerdijk \cite[Theorem~7.1]{Moerdijk2002} and McCrudden \cite[Corollary~3.13]{McCrudden2002}.

\begin{proposition}\label{prop: Moerdijk_reconstruction}
  Let \((B, \mu, \eta)\) be a monad on a monoidal category \(\cat C\).
  There exists a one-to-one correspondence between bimonad structures on \(B\) and monoidal structures on \(\cat C^B\) such that the forgetful functor \(U^B\) is strict monoidal.
\end{proposition}

\subsection{Comodule monads}\label{subsec: comodule_monads}

Monads with a `coaction' over a bimonad were defined and studied by Aguiar and Chase in \cite{Aguiar2012}.
This concept is needed to obtain an adequate monadic interpretation of twisted centres.
We briefly summarise the aspects of the aforementioned article that are needed for our investigation\footnote{
  We slightly deviate from \cite{Aguiar2012} in that we study right comodule monads as opposed to their left versions.
}.
To keep our notation concise, we fix two monoidal categories \(\cat C\) and \(\cat D\) and over each a right module category \(\cat M\) and \(\cat N\).
\begin{definition} \label{def: comodule_functor}
  Suppose \((F, \Delta, \varepsilon) \from \cat C \to \cat D\) to be an oplax monoidal functor.
  A \emph{(right) comodule functor over \(F\)} is a pair \((G, \delta)\) consisting of a  functor \(G \from \cat M \to \cat N\) together with a natural transformation
  \begin{equation}\label{eq: coaction_of_comodule_functor}
    \delta_{M,X} \from G(M \ract X) \to G(M) \ract F(X),\qquad  \text{ for all } X\in \cat C \text{ and } M \in \cat M,
  \end{equation}
  called the \emph{coaction} of \(G\), which is \emph{coassociative} and \emph{counital}; see \cite[Definition~3.5]{Aguiar2012}.

  A comodule functor is called \emph{strong} if its coaction is an isomorphism.
\end{definition}

A recurring example of strong comodule functors in our investigation is given by forgetful functors.
By construction \(U^{(Z)} \from \ZCat C \to \cat C\) is strict monoidal.
Over it, the forgetful functor \(U^{(L)} \from \ZCat[L]{C} \to \cat C\) from a left twisted centre to its base category is strict comodule.

\begin{definition}\label{def: morphism_of_comodules}
  Let \(G, K \from \cat M \to \cat N\) be comodule functors over  \(B, F \from \cat C \to \cat D\).
  A \emph{comodule natural transformation} from \(G\) to \(K\) is a pair of natural transformations \(\phi \from G\to K\) and \(\psi \from B \to F\) such that
  \begin{equation}\label{eq: comodule_natural_transformation}
    (\phi_M \ract \psi_X)\delta^{(G)}_{M,X}
    = \delta^{(K)}_{M, X} \phi_{M \ract X},
    \qquad \text{for all }X \in \cat C \text{ and }M \in \cat M.
  \end{equation}
  We call \((\phi, \psi)\) a \emph{morphism of comodule functors} if \(B=F\) and \(\psi=\id_B\).
\end{definition}

Suppose the pair \(\phi \from G\to K\) and \(\psi \from B \to F\) to constitute a comodule natural transformation.
We can view \(\phi \from G \to K\) as a morphism of comodule functors over \(F\) if we equip \(G\) with a new coaction.
It is given for all \(X \in \cat C\) and \(M \in \cat M\) by
\begin{equation*}
  G(M \ract X) \trightarrow{\delta^{(G)}_{M,X}} G(M) \ract B(X) \trightarrow{\id_{G(M)}\ract\psi_X} G(M) \ract F(X).
\end{equation*}
It follows that by altering the involved coactions suitably, comodule natural transformations and morphisms of comodule functors can be identified with each other.

\begin{remark}\label{rem: comodules_over_bimonad_mon_cat}
  Let \((B, \mu, \eta, \Delta, \varepsilon) \from \cat C \to \cat C\) be a bimonad and \(\cat M\) a module category over \(\cat C\).
  The unit \(\eta \from \Id_{\cat C} \to B\) implements a coaction on \(\Id_{\cat M} \from \cat M \to \cat M\) via
  \begin{equation}
    \id_M \ract \eta_X \from \Id_{\cat M}(M \ract X) \to \Id_{\cat M}(M) \ract B(X), \qquad \text{ for all } X \in \cat C, M \in \cat M.
  \end{equation}
  Using the multiplication \(\mu \from B^2 \to B\), we can equip the composition \(GK\) of two comodule functors \(G, K \from \cat M \to \cat M\) with a comodule structure:
  \begin{equation}
    \delta^{(GK)}\defeq (\id \ract\mu)\delta^{(G)} G(\delta^{(K)})\from GK(\blank \ract \blank) \to GK(\blank) \ract B(\blank).
  \end{equation}
  Due to the associativity and unitality of the multiplication of \(B\), the category \(\Com(B, \cat M)\) of comodule endofunctors on \(\cat M\) over \(B\) is monoidal.
\end{remark}

\begin{definition}\label{def: comodule_monad}
  Consider a bimonad \(B \from \cat C \to \cat C\) and a module category \(\cat M\) over \(\cat C\).
  A \emph{comodule monad} over \(B\) on \(\cat M\) is a comodule endofunctor \((K, \delta) \from \cat M \to \cat M\) together with morphisms of comodule functors  \(\mu \from K^2 \to K\)  and \(\eta \from \Id_{\cat M} \to K\) such that \((K,\mu,\eta)\) is a monad.

  A \emph{morphism of comodule monads} is a natural transformation of comodule functors \(f \from K \to L\) that is also a morphism of monads.
\end{definition}

\begin{remark}\label{rem: comodule_monads_induce_module_categories}
  Let \(B \from \cat C \to \cat C\) be a bimonad and  \((K, \delta) \from \cat M \to \cat M\) a comodule monad over it.
  The coaction of \(K\) allows us to define an action \(\ract \from \cat M^{K}\times \cat C^B \to \cat M^K\).
  For any two modules \((M, \vartheta_{M}) \in \cat M^K\) and \((X, \vartheta_{X}) \in \cat C^B\), it is given by
  \begin{equation}\label{eq: action_from_coaction}
    \big(M, \vartheta_{M}\big) \ract \big(X, \vartheta_{X}\big) \defeq \big(M \ract X, (\vartheta_{M}\ract \vartheta_{X}) \delta_{M,X} \big).
  \end{equation}
  The axioms of the coaction of \(B\) on \(K\) translate precisely to the compatibility of the action of \(\cat C^B\) on \(\cat M^{K}\) with the tensor product and unit of \(\cat C^B\).
\end{remark}

\subsection{Cross products and distributive laws}\label{subsec: cross_products_and_distributive_laws}

Suppose \(\cat C\) to be the modules of a Hopf monad \(H \from \cat V \to \cat V\).
The Hopf monadic description of the Drinfeld centre \(\ZCat C\) of \(\cat C\) due to \Bruguieres and Virelizier, given in \cite{Bruguieres2012}, is achieved as a two-step process.
First, by finding a suitable monad on \(\cat C\) and then `extending' it to a monad on \(\cat V\).
We will review this `extension' process based on Sections 3 and 4 of \cite{Bruguieres2012}.

\begin{definition}\label{def: crossed_product_monad}
  Let \(H\from \cat V \to \cat V\) be a monad and \(T \from \cat V^H \to \cat V^H\) a functor.
  The \emph{cross product} \(T\rtimes H\) of \(T\) by \(H\) is the endofunctor
  \(U^H T F^H \from \cat V \to \cat V\).

  If \(T\) is a monad, \(T \rtimes H\) inherits this structure:
  multiplication and unit are given by
  \begin{equation}\label{eq: mult_and_unit_of_cross_product}
  \mu^{T \rtimes H} \defeq U^H(\mu^{(T)}_{F^H}) U^HT(\epsilon^{(F \adjoint U)}_{TF^H}) \qquad\text{and}\qquad
  \eta^{T \rtimes H} \defeq U^H(\eta^{(T)}_{F^H})\eta^{(F \adjoint U)}.
  \end{equation}
\end{definition}

The cross product \(B \rtimes H \from \cat C \to \cat C\) of two bimonads \(H \from \cat V \to \cat V\) and \(B \from \cat V^H \to \cat V^H\) is a bimonad again, with comultiplication
\[
  \Delta^{(U^H)}_{TF^H(\blank), TF^H(\blank)} U^H (\Delta^{(T)}_{F^H(\blank), F^H(\blank)}) U^H T (\Delta^{(F^H)}_{\blank, \blank}),
\]
and counit
\[
  \varepsilon^{(U^H)} U^H (\varepsilon^{(T)}) U^H T (\varepsilon^{(F^H)}).
\]

Similar considerations imply the following:

\begin{lemma}\label{lem: cross_product_of_bi/comonads_is_bi/comonad}
  Let \(H \from \cat V \to \cat V\) and \(B \from \cat V^H \to \cat V^H\) be bimonads which respectively coact on the comodule monads \(K \from \cat M \to \cat M\) and \(C \from \cat M^K \to \cat M^K\).
  The cross product \(C \rtimes K \from \cat M \to \cat M\) is a comodule monad over \(B \rtimes H\) via the coaction
  \begin{equation}\label{eq: coaction_on_cross_product}
    \delta^{(U^K)}_{C F^K(\blank), B F^H(\blank)} U^K (\delta^{(C)}_{F^K(\blank), F^H(\blank)}) U^K C (\delta^{(F^K)}_{\blank, \blank}).
  \end{equation}
\end{lemma}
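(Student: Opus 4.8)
The plan is to realise $C \rtimes K$ and $B \rtimes H$ as the monad and bimonad induced by a \emph{composite} of comodule adjunctions, so that the comodule monad structure is produced for free by the correspondence between comodule adjunctions and comodule monads. First I would record that the \EilenbergMoore adjunctions $\adj{F_K}{U_K}{\cat M}{\cat M^K}$ and $\adj{F_C}{U_C}{\cat M^K}{(\cat M^K)^C}$ are comodule adjunctions over the monoidal adjunctions $\adj{F_H}{U_H}{\cat V}{\cat V^H}$ and $\adj{F_B}{U_B}{\cat V^H}{(\cat V^H)^B}$, respectively. This is exactly Corollary~\ref{cor: Moerdijk_for_comodule_monads}: since $K$ is a comodule monad over $H$ and $C$ is a comodule monad over $B$, the forgetful functors $U_K$ and $U_C$ are strict comodule functors over $U_H$ and $U_B$, which by Theorem~\ref{thm: comodule_adjunctions_correspond_to_strong_comodule_functors} is equivalent to these two \EilenbergMoore adjunctions being comodule adjunctions.

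Next I would compose these adjunctions vertically. On the base, the composite $\adj{F_B F_H}{U_H U_B}{\cat V}{(\cat V^H)^B}$ is again monoidal: both $U_H$ and $U_B$ are strict monoidal, hence so is $U_H U_B$, and Lemma~\ref{lem: monoidal_adjunctions_vs_bimonads} promotes this to a monoidal adjunction. Its induced bimonad is $U_H U_B F_B F_H = U_H B F_H = B \rtimes H$, carrying the comultiplication and counit of Diagram~\eqref{eq: comult_and_counit_on_cross_product}. On the total level, the composite right adjoint $U_K U_C$ inherits the composite comodule structure over $U_H U_B$; as a composite of strict comodule functors it is again strict comodule, so Theorem~\ref{thm: comodule_adjunctions_correspond_to_strong_comodule_functors} makes $\adj{F_C F_K}{U_K U_C}{\cat M}{(\cat M^K)^C}$ a comodule adjunction over $\adj{F_B F_H}{U_H U_B}{\cat V}{(\cat V^H)^B}$. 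Here I use that $(\cat M^K)^C$ is a right module over $(\cat V^H)^B$, which holds by Remark~\ref{rem: comodule_monads_induce_module_categories}.

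With this composite comodule adjunction at hand, the monadic half of the Aguiar--Chase correspondence, see \cite[Proposition~4.3.1]{Aguiar2012} and Equation~\eqref{eq: coact_for_bimonad_from_comodule_adjunction}, immediately yields that the induced bimonad $B \rtimes H$ coacts on the induced monad $U_K U_C F_C F_K = U_K C F_K = C \rtimes K$, exhibiting $C \rtimes K$ as a comodule monad over $B \rtimes H$. In particular the resulting coaction is automatically coassociative and counital, and the multiplication and unit of $C \rtimes K$ are automatically morphisms of comodule functors, so none of the comodule monad axioms has to be verified by hand.

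It then remains to identify this abstractly induced coaction with the explicit one displayed in Diagram~\eqref{eq: coaction_on_cross_product}. This is a direct diagrammatic computation: one expands $\delta^{(V)}\,V(\delta^{(G)})$ for $G = F_C F_K$ and $V = U_K U_C$ into the coactions $\delta^{(K)}$, $\delta^{(C)}$ together with the units and counits of the two \EilenbergMoore adjunctions, and reads off the picture using the snake identities~\eqref{eq: snake_identity_adjunction_string_diagram}. I expect this final bookkeeping step, rather than any conceptual point, to be the main obstacle; the only genuine subtlety in the conceptual part is checking that comodule functors and adjunctions compose coherently across the two distinct base adjunctions, which reduces to the strictness of $U_H$, $U_B$, $U_K$ and $U_C$.
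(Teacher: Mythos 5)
Your argument is correct, but it takes a genuinely different route from the paper. The paper gives no written proof of this lemma: it defines the coaction \eqref{eq: coaction_on_cross_product} explicitly and asserts that the comodule monad axioms follow by the same kind of direct string-diagram verification used for the bimonad structure on $B \rtimes H$, i.e.\ by manipulating Diagrams \eqref{eq: comodule_adjunction_unit}, \eqref{eq: comodule_adjunction_counit} and \eqref{eq: mult_and_unit_comodule_monad} in place of \eqref{eq: unit_monoidal_adjunction}, \eqref{eq: counit_monoidal_adjunction}, \eqref{eq: comult_of_mult} and \eqref{eq: counit_of_unit}. You instead derive the structure conceptually: by Corollary~\ref{cor: Moerdijk_for_comodule_monads} and Theorem~\ref{thm: comodule_adjunctions_correspond_to_strong_comodule_functors} the two \EilenbergMoore adjunctions are comodule adjunctions, their composite $\adj{F_C F_K}{U_K U_C}{\cat M}{(\cat M^K)^C}$ is again one (since $U_K U_C$ is strict comodule over the strict monoidal $U_H U_B$, and $(\cat M^K)^C$ is a module over $(\cat V^H)^B$ by Remark~\ref{rem: comodule_monads_induce_module_categories}), and Equation~\eqref{eq: coact_for_bimonad_from_comodule_adjunction} then hands you the comodule monad $C \rtimes K$ over $B \rtimes H$ with all axioms holding automatically. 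This is the cleaner argument and mirrors how \Bruguieres--Virelizier treat cross products via composite adjunctions; what it buys is that no axiom has to be checked by hand, at the price of the final bookkeeping step you correctly identify: unfolding $\delta^{(U_K U_C)}\,(U_K U_C)(\delta^{(F_C F_K)})$ using the strictness of $\delta^{(U_K)}$, $\delta^{(U_C)}$ and Diagram~\eqref{eq: coaction_via_counit_unit_and_inverse} for the free functors, and matching it against \eqref{eq: coaction_on_cross_product}. That step is routine but should not be omitted in a written version, since the lemma asserts a \emph{specific} coaction; everything else in your outline is sound and relies only on results established before the lemma, so there is no circularity.
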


Assume we have a monad \(B \from \cat V^H \to \cat V^H\) `on top' of another monad \(H \from \cat V \to \cat V\).
The question under which conditions the modules \(\cat V^{B\rtimes H}\) of \(B\rtimes H\) are isomorphic to \((\cat V^H)^B\) is closely related to Beck's theory of \emph{distributive laws}, developed in \cite{Beck1969}.

Street developed the theory of monads and distributive laws intrinsic to `well-behaved' 2-categories in \cite{Street1972}.
If we apply his findings to the 2-category \(\otimes\text{-}\mathsf{Cat}\) of monoidal categories, oplax monoidal functors and oplax monoidal natural transformations, we obtain a description of bimonads and \emph{oplax monoidal distributive laws}, see also \cite{McCrudden2002}.
That is, oplax monoidal natural transformations \(\Lambda \from HB \to BH\) between bimonads \(H, B \from \cat V \to \cat V\) that are moreover distributive laws.
Accordingly, suppose \(\Lambda \from HB \to BH\) to be an oplax monoidal distributive law.
The comultiplication and counit of the underlying functor \(BH \from \cat V \to \cat V\) turn \(B \circ_{\Lambda} H\) into a bimonad.

Comodule monads, on the other hand, can be intrinsically described in the 2-category \((\!\ract\text{-}\mathsf{Cat}, \otimes\text{-}\mathsf{Cat})\) which has
\begin{thmlist}
  \item as objects pairs \((\cat M, \cat V)\) comprising a right module category \(\cat M\) over a monoidal category \(\cat V\),
  \item as \(1\)-morphisms pairs \((G, F)\) of a comodule functor \(G\) over an oplax monoidal functor \(F\) and
  \item as \(2\)-morphisms pairs \((\phi, \psi)\) which constitute a  comodule natural transformation.
\end{thmlist}

The subsequent results arise immediately from \cite{Street1972}.

\begin{definition}\label{def: comodule_distributive_law}
  Let \(K, C \from \cat M \to \cat M\) be two comodule monads over the bimonads \(H,B \from \cat V \to \cat V\), respectively.
  A \emph{comodule distributive law} is a pair of distributive laws \(\Omega \from KC \to CK\) and \(\Lambda \from HB \to BH\)  such that \((\Lambda, \Omega)\) is a comodule natural transformation.
\end{definition}

\begin{proposition}\label{prop: distributive_laws_are_lifts_of_monads}
  Consider two comodule monads \(K, C \from \cat M \to \cat M\) over the bimonads \(H,B \from \cat V \to \cat V\).
  There exists a bijective correspondence between:
  \begin{thmlist}
    \item comodule distributive laws \((KC \trightarrow{\Omega} CK, HB \trightarrow{\Lambda}BH)\) and
    \item lifts of \(B\) to a bimonad \(\tilde B \from \cat V^H \to \cat V^H\) together with lifts of \(C\) to a comodule monad \(\tilde C \from \cat M^K \to \cat M^K\) over \(\tilde B\) such that \(B U^H = U^H\tilde B\) as oplax monoidal functors and \(C U^K = U^K \tilde C\) as comodule functors.
  \end{thmlist}
\end{proposition}

Let \((KC \trightarrow{\;\Omega\;} CK, HB \trightarrow{\;\Lambda\;} BH)\) be a comodule distributive law.
The coactions of \(K\) and \(C\) turn \(C \circ_{\Omega} K\) into a comodule monad over \(B \circ_{\Lambda} H\).

\begin{lemma}\label{lem: distributive_laws_and_comparison_functor}
  Suppose \(\Omega \from KC \to CK\) and \(\Lambda \from HB \to BH\) to form a comodule distributive law, then
  \begin{thmlist}
    \item \((\cat V^H)^{\tilde B^{\Lambda}}\) is isomorphic as a monoidal category to
    \(\cat V^{B \circ_{\Lambda} H}\) and
    \item \((\cat M^K)^{\tilde C^{\Omega}}\) is isomorphic as a module category over \(\cat V^{B \circ_{\Lambda} H}\) to \(\cat M^{C \circ_{\Omega} K}\).
  \end{thmlist}
\end{lemma}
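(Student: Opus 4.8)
The plan is to realise both isomorphisms as comparison functors of composite (comodule) adjunctions and then to read off the additional structure from the reconstruction results already at our disposal; the underlying isomorphisms of categories will be supplied by Beck's theorem on distributive laws, \cite{Beck1969}.

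For the first assertion I would set $\tilde B \defeq \tilde B^{\Lambda}$ and recall from Theorem~\ref{thm: distributive_laws_are_lifts_of_monads} that $\tilde B$ is a bimonad on $\cat V^H$ with $B U_H = U_H \tilde B$ as oplax monoidal functors. The free--forgetful adjunctions $F_H \adjoint U_H$ and $F_{\tilde B} \adjoint U_{\tilde B}$ are monoidal by Lemma~\ref{lem: monoidal_adjunctions_vs_bimonads}, hence so is their composite $F_{\tilde B} F_H \adjoint U_H U_{\tilde B}$. Its induced monad on $\cat V$ is
\[
  U_H U_{\tilde B} F_{\tilde B} F_H = U_H \tilde B F_H = B U_H F_H = BH,
\]
and unwinding the multiplication, unit, comultiplication and counit produced by Equations~\eqref{eq: monad_from_adjunction_mult_and_unit}, \eqref{eq: comult_for_bimonad_from_adjunction} and~\eqref{eq: counit_for_bimonad_from_adjunction} would identify this bimonad with $B \circ_{\Lambda} H$; this is the step where the oplax monoidality of $\Lambda$ enters. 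Beck's theorem then shows that the comparison functor $\Sigma^{(B)} \from (\cat V^H)^{\tilde B} \to \cat V^{B \circ_{\Lambda} H}$ is an isomorphism of categories, and since the composite adjunction is monoidal, Lemma~\ref{lem: comparison_functor_bimonad_monoidal} shows it is strong monoidal. Thus $\Sigma^{(B)}$ is a monoidal isomorphism.

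The second assertion follows the same pattern one level up. Writing $\tilde C \defeq \tilde C^{\Omega}$, Theorem~\ref{thm: distributive_laws_are_lifts_of_monads} provides a comodule monad $\tilde C$ over $\tilde B$ on $\cat M^K$ with $C U_K = U_K \tilde C$. I would compose the comodule Eilenberg--Moore adjunctions $F_K \adjoint U_K$ and $F_{\tilde C} \adjoint U_{\tilde C}$ (over $F_H \adjoint U_H$ and $F_{\tilde B} \adjoint U_{\tilde B}$, respectively) into a comodule adjunction $F_{\tilde C} F_K \adjoint U_K U_{\tilde C}$ over the monoidal adjunction of the first assertion; by Equation~\eqref{eq: coact_for_bimonad_from_comodule_adjunction} its induced comodule monad on $\cat M$ is $U_K \tilde C F_K = C U_K F_K = CK$, which the coaction formula identifies with $C \circ_{\Omega} K$, now using that $(\Lambda, \Omega)$ is a comodule natural transformation. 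Beck's theorem again shows that the comparison functor $\Sigma^{(K)} \from (\cat M^K)^{\tilde C} \to \cat M^{C \circ_{\Omega} K}$ is an isomorphism of categories, and Lemma~\ref{lem: comparison_functor_comodule_monad_comodule} shows it is a strong comodule functor over the monoidal isomorphism $\Sigma^{(B)}$. Translating coactions into module-category structures via Remark~\ref{rem: comodule_monads_induce_module_categories} and Corollary~\ref{cor: Moerdijk_for_comodule_monads}, this comodule isomorphism over $\Sigma^{(B)}$ is precisely an isomorphism of module categories over $\cat V^{B \circ_{\Lambda} H} \cong (\cat V^H)^{\tilde B}$.

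The genuinely routine part is the diagrammatic verification that the composite (co)multiplications, counits and coactions coincide with those defining $B \circ_{\Lambda} H$ and $C \circ_{\Omega} K$. I expect the main obstacle to be the bookkeeping in the second assertion: checking that the module-category structure transported along $\Sigma^{(K)}$ is the one induced by the coaction of $B \circ_{\Lambda} H$ on $C \circ_{\Omega} K$, rather than merely some abstract action. This is exactly what the comodule reconstruction of Corollary~\ref{cor: Moerdijk_for_comodule_monads} is designed to absorb: a strong comodule functor over a monoidal isomorphism automatically respects the associated module actions, so the isomorphism of underlying categories upgrades to an isomorphism of module categories without further computation.
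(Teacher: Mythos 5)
Your argument is correct and coincides with what the paper intends: the lemma is stated there without proof, as an immediate consequence of Beck's theory of distributive laws and Street's formal theory of monads, and your realisation of both isomorphisms as comparison functors of the composite (comodule) Eilenberg--Moore adjunctions—upgraded to monoidal and module isomorphisms via Lemma~\ref{lem: comparison_functor_bimonad_monoidal}, Lemma~\ref{lem: comparison_functor_comodule_monad_comodule} and Corollary~\ref{cor: Moerdijk_for_comodule_monads}—is precisely the standard derivation being invoked. No gaps.
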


If \(B, H \from \cat V \to \cat V\) are Hopf monads,
\cite{Bruguieres2012} shows that if \(\Lambda \from HB \to BH\) is a monoidal distributive law,
\(B \circ_{\Lambda} H\from \cat V \to \cat V\) and the lift \(\tilde B^{\Lambda} \from \cat V^H \to \cat V^H\) are Hopf monads, as well.


\section{A monadic perspective on twisted centres}\label{sec: mon_perspective}
The anti-\YetterDrinfeld modules of a finite-dimensional Hopf algebra are a module category over the \YetterDrinfeld modules.
Subsequently, they are implemented by a comodule algebra over the Drinfeld double, see \cite{Hajac2004a}.
As explained in Section~\ref{sec: pivotality_of_the_df_centre}, we find ourselves in a similar situation.
Our replacement of the anti-\YetterDrinfeld modules, the anti-Drinfeld centre,
is a module category over the Drinfeld centre.

We replace finite-dimensional vector spaces by a rigid, possibly pivotal, category \(\cat V\) and the underlying Hopf algebra with a Hopf monad  \(H \from \cat V \to \cat V\).
In this section we study a Hopf monad \(D(H) \from \cat V \to \cat V\) and over it a comodule monad \(Q(H)\from \cat V \to \cat V\), which realise the centre and its twisted cousin as their respective modules.
\Bruguieres and Virelizier gave a transparent description of \(D(H)\) in \cite{Bruguieres2012} by extending results of Day and Street, see \cite{Day2007}.
The key concept in its construction is the so-called centraliser of the identity functor of \(\cat V^H\).
It is used to define a Hopf monad \(\mathfrak D(\cat V^H)\) on \(\cat V^H\) with \(\ZCat{\cat V^H}\) as its \EilenbergMoore category.
From this, one obtains—as an application of Beck's theory of distributive laws—the Drinfeld double \(D(H) \from \cat V \to \cat V\).
We apply the same techniques to define the anti-double \(Q(H)\) of \(H\), whose modules are isomorphic to the `dual' of the anti-Drinfeld centre \(\QCat{\cat V^H}\).
This approach is best summarised by the following diagram:
\begin{center}
  \begin{tikzcd}[column sep={1cm,between origins}, row sep={0.875cm,between origins}]
    & &
    \ZCat*{\cat V^H}
    \arrow[rrddd, "U^{(Z)}" description, bend left]
    \arrow[lldddd, leftrightarrow, "\Sigma^{(\mathfrak D)}" description, bend left=15]
    & & & &
    \QCat{\cat V^H}
    \arrow[llddd, "U^{(Q)}" description, bend left]
    \arrow[llll, "action" description, no head, dotted]
    \arrow[rrdddd, leftrightarrow, "\Sigma^{(\mathfrak Q)}" description, bend right=15]
    & & \\
    & & & & & & & & \\
    & & & & & & & & \\
    & & & &
    {\cat V^H}
    \arrow[rruuu, "F^{(Q)}" description, bend left]
    \arrow[lluuu, "F^{(Z)}" description, bend left]
    \arrow[lllld, "F^{\mathfrak D}" description, bend left=20]
    \arrow[rrrrd, "F^{\mathfrak Q}" description, bend left=20]
    \arrow[ddd, "U^H" description, bend left]
    & & & & \\
    {{\cat V^H}}^{\mathfrak D}
    \arrow[rrrru, "U^{\mathfrak D}" description, bend left=20]
    \arrow[rrdddd, leftrightarrow, "\Sigma^{(D(H))}" description, bend left=15]
    & & & & & & & &
    {{\cat V^H}}^{\mathfrak Q}
    \arrow[llllu, "U^{\mathfrak Q}" description, bend left=20]
    \arrow[lldddd, leftrightarrow, "\Sigma^{(Q(H))}" description, bend right=15]
    \\
    & & & & & & & & \\
    & & & &
    \cat V
    \arrow[uuu, "F^H" description, bend left]
    \arrow[rrdd, "F^{Q(H)}" description, bend left]
    \arrow[lldd, "F^{D(H)}" description, bend left]
    & & & & \\
    & & & & & & & & \\
    & &
    \cat V^{D(H)}
    \arrow[rruu, "U^{D(H)}" description, bend left]
    & & & &
    \cat V^{Q(H)}
    \arrow[lluu, "U^{Q(H)}" description, bend left]
    \arrow[llll, "action" description, no head, dotted]
    & &
  \end{tikzcd}
  
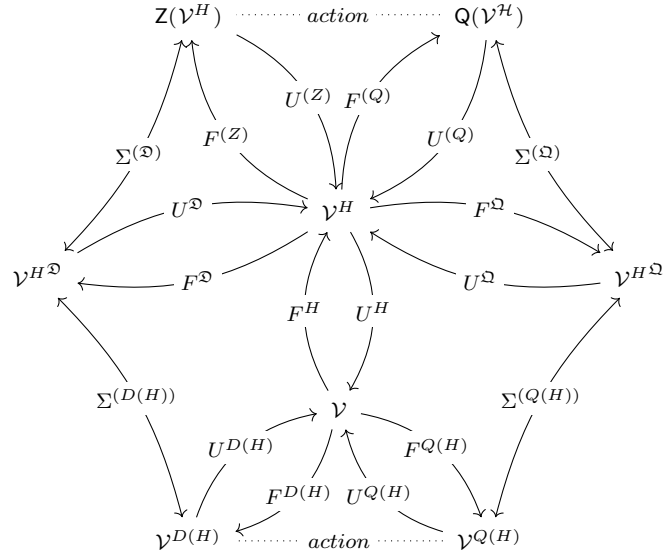
\captionof{figure}{A cobweb of adjunctions, monads and various versions of the Drinfeld and anti-Drinfeld centre.}
  \label{fig: twisted_centres_and_monads}
\end{center}
The translation of module functors between \(\ZCat*{\cat V^H}\) and \(\QCat{\cat V^H}\) into morphisms of comodule monads between \(Q(H)\) and \(D(H)\) yields our desired monadic version of Theorem~\ref{thm: equiv_in_Hopf_setting}, which we prove in Theorem~\ref{thm: equivalence_in_the_monad_setting}.
We end our endeavour into the theory of comodule monads with Corollary~\ref{cor: pivotal_from_central_anti_central}.
In it, we explain how pivotal structures on \(\cat V^H\) arise from module morphisms between the so-called central Hopf monad \(\mathfrak D\) and the anti-central comodule monad \(\mathfrak Q\).

\subsection{Centralisable functors and the central bimonad} \label{section: centralisable_functors_and_the_central_bimonad}

The construction of the double of a Hopf monad \(H \from \cat V \to \cat V\) given in \cite{Bruguieres2012} relies heavily on an `accessible' left dual of the forgetful functor \(U^{(Z)} \from \ZCat*{\cat V^H} \to \cat V^H\).
It is obtained as an application of \emph{coend calculus}; see for example \cite{loregian2021}.

\begin{definition}\label{def: centraliser}
  Suppose \(\cat C\) to be a rigid category and \(T \from \cat C \to \cat C\) to be an endofunctor.
  We call \(T\) \emph{centralisable} if there exists a universal extranatural transformation
  \begin{equation*}
    \zeta_{Y,X} \from \lDual{T(Y)} \otimes X \otimes Y \to Z_T(X),
    \qquad \text{ for } X, Y \in \cat C.
  \end{equation*}
\end{definition}

A centralisable functor \(T \from \cat C \to \cat C\) admits a \emph{universal coaction}
\begin{equation}\label{eq: universal_coaction}
  \chi_{X,Y} \defeq (\id_{T(Y)} \otimes \zeta_{Y,X})(\coev^l_{T(Y)} \otimes\, \id_{X \otimes Y}),
  \qquad \text{ for } X, Y \in \cat C,
\end{equation}
which is natural in both variables.
We call the pair \((Z_T, \chi)\) a \emph{centraliser} of \(T\).

Graphically, we represent the universal coaction as
\begin{equation}\label{eq: universal_coaction_string_diag}
  \tikzsetnextfilename{universal_coaction}%
  \input{sections/tikzfigures/universal_coaction.tikz}%

\end{equation}
It being natural equates to
\begin{equation}\label{eq: universal_coaction_natural}
  \tikzsetnextfilename{universal_coaction_natural}%
  \input{sections/tikzfigures/universal_coaction_natural.tikz}%

\end{equation}

The \emph{extended factorisation property} of universal coactions provides us with a potent tool for constructing bi- and comodule monads.
Its proof is given for example in \cite[Lemma~5.4]{Bruguieres2012}.
\begin{lemma} \label{lem: extended_factorisation_property}
  Let \((Z_T, \chi)\) be the centraliser of a functor \(T \from \cat C \to \cat C\) and suppose that \(L, R \from \cat D \to \cat C\) are two functors.
  For any \(n \in \mathbb N\) and any natural transformation
  \begin{equation*}
    \phi_{X, Y_1, \dots, Y_n} \from L(X) \otimes  Y_1 \otimes \dots \otimes Y_n \to T(Y_1) \otimes \dots \otimes T(Y_n) \otimes R(X),
  \end{equation*}
  where \(X \in \cat D\) and \(Y_1, \dots, Y_n \in \cat C\), there exists a unique natural transformation
  \begin{equation*}
    \nu_V \from Z_T^n L(X) \to R(X) , \qquad \text{ for } V \in \cat D,
  \end{equation*}
  which satisfies
  \begin{equation}
  \tikzsetnextfilename{property_of_univ_coaction}%
  \input{sections/tikzfigures/property_of_univ_coaction.tikz}%

  \end{equation}
\end{lemma}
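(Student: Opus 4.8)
The plan is to prove the statement by induction on $n$, using the defining universal property of the centraliser $(Z_T, \chi)$ to strip off one copy of $T$ at a time. Here $Z_T^n$ denotes the $n$-fold composite of $Z_T$, and the natural target of the factorisation is the iterated universal coaction
\[
	\chi^{(n)}_{X, Y_1, \dots, Y_n} \from X \otimes Y_1 \otimes \dots \otimes Y_n \to T(Y_1) \otimes \dots \otimes T(Y_n) \otimes Z_T^n(X),
\]
defined by $\chi^{(0)} \defeq \id$ and $\chi^{(n)}_{X, Y_1, \dots, Y_n} \defeq (\id_{T(Y_1)} \otimes \chi^{(n-1)}_{Z_T(X), Y_2, \dots, Y_n})(\chi_{X, Y_1} \otimes \id_{Y_2 \otimes \dots \otimes Y_n})$, using the universal coaction of Equation~\eqref{eq: universal_coaction}. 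I claim that the transformation $\nu_X \from Z_T^n L(X) \to R(X)$ asserted by the lemma is the unique natural transformation satisfying $\phi_{X, Y_1, \dots, Y_n} = (\id_{T(Y_1)} \otimes \dots \otimes \id_{T(Y_n)} \otimes \nu_X)\, \chi^{(n)}_{L(X), Y_1, \dots, Y_n}$, and the displayed diagram in the statement encodes exactly this equation.

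For the base case $n = 0$ I would simply set $\nu \defeq \phi \from L \to R$. For the inductive step I would peel off the leftmost factor $T(Y_1)$, treating $X$ and $Y_2, \dots, Y_n$ as parameters and $Y_1$ as the single active variable. Transposing $\phi$ along the duality of $T(Y_1)$ by means of the snake identities~\eqref{eq: snake_identities} turns it into a family
\[
	\lDual{T(Y_1)} \otimes L(X) \otimes Y_1 \otimes Y_2 \otimes \dots \otimes Y_n \to T(Y_2) \otimes \dots \otimes T(Y_n) \otimes R(X)
\]
which is extranatural in $Y_1$. Since $\cat C$ is rigid, tensoring on the right with $Y_2 \otimes \dots \otimes Y_n$ admits a right adjoint by Theorem~\ref{thm: rigidity_yields_adjoints} and hence preserves colimits; in particular it preserves the coend defining $Z_T(L(X))$, so that $Z_T(L(X)) \otimes Y_2 \otimes \dots \otimes Y_n$ is again a coend with cowedge $\zeta_{Y_1, L(X)} \otimes \id_{Y_2 \otimes \dots \otimes Y_n}$. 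The universal property of $\zeta$ therefore produces a unique natural
\[
	\phi'_{X, Y_2, \dots, Y_n} \from Z_T(L(X)) \otimes Y_2 \otimes \dots \otimes Y_n \to T(Y_2) \otimes \dots \otimes T(Y_n) \otimes R(X),
\]
and transposing back shows $\phi_{X, Y_1, \dots, Y_n} = (\id_{T(Y_1)} \otimes \phi'_{X, Y_2, \dots, Y_n})(\chi_{L(X), Y_1} \otimes \id_{Y_2 \otimes \dots \otimes Y_n})$.

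The residual $\phi'$ has precisely the shape required by the lemma, but with $n-1$ active variables and with $L$ replaced by the functor $Z_T \circ L$. The inductive hypothesis then supplies a unique natural $\nu_X \from Z_T^{n-1}(Z_T L(X)) = Z_T^n L(X) \to R(X)$ factoring $\phi'$ through $\chi^{(n-1)}_{Z_T L(X), Y_2, \dots, Y_n}$; substituting this into the peeling identity and invoking the recursive definition of $\chi^{(n)}$ yields the desired factorisation of $\phi$. Uniqueness of $\nu$ follows by combining the uniqueness in the coend universal property at the peeling step with that of the inductive hypothesis, and naturality of $\nu_X$ in $X \in \cat D$ is forced by the same uniqueness, since for each morphism of $\cat D$ the two a priori distinct composites factor one and the same transformation.

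The step I expect to be the main obstacle is the correct bookkeeping of the spectator objects $Y_2 \otimes \dots \otimes Y_n$ as they pass through the coend. This is exactly where rigidity of $\cat C$ is indispensable: it is what guarantees (via Theorem~\ref{thm: rigidity_yields_adjoints}) that tensoring does not destroy the universal property of $Z_T$, and it is also where the snake identities must be applied to translate the evaluation-based cowedge form of $\zeta$ into the coevaluation-based coaction form of $\chi$, so that the factorisation obtained from the coend matches the factorisation through $\chi^{(n)}$ demanded by the statement.
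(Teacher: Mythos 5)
Your argument is correct, and it is essentially the standard proof: the paper itself does not prove this lemma but defers to \cite[Lemma~5.4]{Bruguieres2012}, where the factorisation is established by exactly this induction on $n$ --- transposing along the duality of $T(Y_1)$ to pass between the coaction form $\chi$ and the cowedge form $\zeta$, using rigidity (tensoring with the spectators has a right adjoint) to see that the universal property of $Z_T$ survives tensoring, and then applying the hypothesis to the residual transformation with $L$ replaced by $Z_T L$. Your reading of the displayed diagram as the factorisation through the iterated coaction $\chi^{(n)}$ is also the intended one.
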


Suppose \((T, \Delta^{(T)}, \varepsilon^{(T)}) \from \cat C \to \cat C\) to be an oplax monoidal functor with centraliser \((Z_T, \chi)\).
For all \(X \in \cat C\), the counit of \(T\) combined with the universal coaction of \(Z_T\) gives rise to a natural transformation
\begin{equation}\label{eq: unit_of_centraliser}
  \tikzsetnextfilename{unit_T_central_monad}%
  \input{sections/tikzfigures/unit_T_central_monad.tikz}%

\end{equation}

We derive another natural transformation \(\mu^{(Z_T)} \from Z_T^2 \to Z_T\) from the comultiplication of \(T\).
Due to Lemma~\ref{lem: extended_factorisation_property} it is uniquely defined by
\begin{equation}\label{eq: mult_of_centraliser}
  \tikzsetnextfilename{mult_T_central_monad}%
  \input{sections/tikzfigures/mult_T_central_monad.tikz}%

\end{equation}
\begin{lemma}\label{lem: oplax_centralisable_leads to_monad}
  The centraliser \((Z_T, \chi)\)  of an oplax monoidal endofunctor \(T \from \cat C \to \cat C\) is  a monad with multiplication and unit as given in Equations \eqref{eq: mult_of_centraliser} and \eqref{eq: unit_of_centraliser}.
\end{lemma}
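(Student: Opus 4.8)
The plan is to lean entirely on the defining universal property of the centraliser, encoded in Lemma~\ref{lem: extended_factorisation_property}: a natural transformation out of an iterate $Z_T^n$ is completely determined by its composite with the $n$-fold universal coaction $\chi$ from Equation~\eqref{eq: universal_coaction}. Consequently I never have to manipulate $Z_T$ directly; to verify the monad axioms of Definition~\ref{def: monad} for $(Z_T, \mu^{(Z_T)}, \eta^{(Z_T)})$ it suffices to precompose each identity with the appropriate universal coaction and then reduce the resulting equation to a statement about the coherence morphisms $\Delta^{(T)}$ and $\varepsilon^{(T)}$ of the oplax monoidal functor $T$. The naturality of $\chi$ recorded in Equation~\eqref{eq: universal_coaction_natural} is what will let me slide coactions past one another during these reductions.

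First I would settle associativity. Both $\mu^{(Z_T)} Z_T(\mu^{(Z_T)})$ and $\mu^{(Z_T)} \mu^{(Z_T)}_{Z_T}$ are natural transformations $Z_T^3 \to Z_T$, and by Lemma~\ref{lem: extended_factorisation_property} two such transformations agree as soon as their composites with the threefold universal coaction do. Precomposing and repeatedly inserting the factorisation identity~\eqref{eq: mult_of_centraliser} that defines $\mu^{(Z_T)}$, both sides rewrite into a single composite assembled entirely from iterated comultiplications of $T$. The two rewritings then coincide precisely because $\Delta^{(T)}$ is coassociative, i.e.\ Equation~\eqref{eq: monoidal_fun_compatible_w_tp}, so uniqueness in Lemma~\ref{lem: extended_factorisation_property} forces the two underlying transformations to be equal.

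Next I would treat the two unit axioms in the same spirit. Composing $\mu^{(Z_T)} \eta^{(Z_T)}_{Z_T}$ and $\mu^{(Z_T)} Z_T(\eta^{(Z_T)})$ with the universal coaction and inserting the definitions~\eqref{eq: unit_of_centraliser} and~\eqref{eq: mult_of_centraliser}, the contribution of $\eta^{(Z_T)}$ collapses via the counitality of $\Delta^{(T)}$ against $\varepsilon^{(T)}$, namely Equation~\eqref{eq: monoidal_fun_compatible_w_unit}, together with the snake identities for the coevaluation $\coev^L_{T(Y)}$ sitting inside $\chi$. Both composites thereby reduce to the universal coaction itself, so again by uniqueness the underlying transformations are the identity on $Z_T$.

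The main obstacle I anticipate is organisational rather than conceptual: one must keep careful track of the hidden strands $\lDual{T(Y)}$ and coevaluations $\coev^L_{T(Y)}$ packaged inside each universal coaction, and check at every step that the intermediate composites genuinely have the shape required by Lemma~\ref{lem: extended_factorisation_property}, so that its uniqueness clause may legitimately be invoked. Once this bookkeeping is carried out in the graphical calculus, the monad identities drop out directly from the coassociativity and counitality of $T$.
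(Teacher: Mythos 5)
Your argument is correct and is essentially the proof the paper relies on: the paper does not spell out its own argument but defers to the first part of \cite[Theorem~5.6]{Bruguieres2012}, where exactly this strategy---precomposing with iterated universal coactions and invoking the uniqueness clause of Lemma~\ref{lem: extended_factorisation_property} to reduce the associativity and unitality of $(Z_T,\mu^{(Z_T)},\eta^{(Z_T)})$ to the coassociativity~\eqref{eq: monoidal_fun_compatible_w_tp} and counitality~\eqref{eq: monoidal_fun_compatible_w_unit} of $(T,\Delta^{(T)},\varepsilon^{(T)})$---is carried out. I see no gaps.
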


The above lemma is proven as the first part of \cite[Theorem~5.6]{Bruguieres2012}.
In it, the authors further consider \(T \from \cat C \to \cat C\) to be equipped with a Hopf monad structure and show that in this case \(Z_T\) is a Hopf monad as well.
The extended factorisation property given in Lemma~\ref{lem: extended_factorisation_property} allows us to reconstruct a comultiplication on \(Z_T\) from a twofold application of the universal coaction and the multiplication of \(T\):
\begin{equation}\label{eq: comult_of_centraliser}
  \tikzsetnextfilename{comult_central_bimonad}%
  \input{sections/tikzfigures/comult_central_bimonad.tikz}%

\end{equation}
Likewise, the unit of \(T\) induces a counit on \(Z_T\) via
\begin{equation}\label{eq: counit_of_centraliser}
  \tikzsetnextfilename{counit_central_bimonad}%
  \input{sections/tikzfigures/counit_central_bimonad.tikz}%

\end{equation}
A direct computation now verifies that the centraliser \(Z_T\) is a bimonad as well.
For the construction of left and right antipodes, see \cite[Theorem~5.6]{Bruguieres2012}.

\begin{remark}\label{rem: twisted_centre_of_Hopf_monad_is_rigid}
  We think of \(\ZCat[H]{C}\) as the centre of an oplax bimodule category as stated in Remark~\ref{rem: other_variants_of_twisted_centres}, see also \cite[Section 5.5]{Bruguieres2007}.
  Objects in \(\ZCat[H]{C}\) are pairs \((X, \sigma_{X, \blank})\), where \(X \in \cat C\) and
  \begin{equation*}
    \sigma_{X,Y} \from X \otimes Y \to H(Y)\otimes X, \qquad \qquad \text{ for all } Y \in \cat C
  \end{equation*}
  is a natural transformation satisfying for all \(X,Y,W \in \cat C\)
  \begin{gather}
    (\Delta^{(H)}_{Y,W} \otimes \id_X) \sigma_{X, Y \otimes W} = (\id_{H(Y)} \otimes \sigma_{X,W})(\sigma_{X,Y} \otimes \id_{W}) \\
    (\varepsilon^{(H)} \otimes \id_{X}) \sigma_{X,1} = \id_{X}.
  \end{gather}
  Analogous to the centres studied before, the morphisms in \(\ZCat[H]{C}\) are those morphisms of \(\cat C\) which commute with the respective half-braidings.
  As shown in \cite[Proposition~5.9]{Bruguieres2012}, the structure morphisms of a Hopf monad \(H \from \cat C \to \cat C\) can be used to define a rigid structure on \(\ZCat[H]{C}\).
  For example, the tensor product of two modules \((X, \sigma_{X,\blank}), (Y, \sigma_{Y,\blank}) \in \ZCat[H]{C}\) is \(X\otimes Y \in \cat C\) together with the half-braiding
  \begin{equation}
  \tikzsetnextfilename{tensor_product_half_braidings_Hopf_monad}%
  \input{sections/tikzfigures/tensor_product_half_braidings_Hopf_monad.tikz}%

  \end{equation}
\end{remark}

Since centralisers of Hopf monads are Hopf monads themselves, it stands to reason that their modules implement the twisted centres discussed in the previous remark as a rigid category.
This is proven in \cite[Theorem~5.12 and Corollary~5.14]{Bruguieres2012}.

\begin{proposition}\label{prop: central_bimonad_Drinfeld_centre}
  Suppose \(H \from \cat C \to \cat C\) to be a centralisable Hopf monad.
  The modules \(\cat C^{Z_H}\) of its centraliser \((Z_H, \chi)\) are isomorphic as a rigid category to \(\ZCat[H]{C}\).
\end{proposition}

Applying the above proposition to the identity functor \(\Id \from \cat C \to \cat C\), we obtain a Hopf monadic description of the Drinfeld centre \(\ZCat C\) of a rigid category \(\cat C\).
The terminology of our next definition is due to Shimizu, see \cite{Shimizu2017}.

\begin{definition}\label{def: central_bimonad}
  Let \(\Id \from \cat C \to \cat C\) be centralisable with centraliser \((Z, \chi)\).
  We call  \(\mathfrak D(\cat C) \defeq (Z, \mu^{(Z)}, \eta^{(Z)}, \Delta^{(Z)}, \varepsilon^{(Z)}) \from \cat C \to \cat C\) the \emph{central Hopf monad} of \(\cat C\) and denote the category of its modules by \(\cat C^{\mathfrak D}\).
\end{definition}

An important step in proving Proposition~\ref{prop: central_bimonad_Drinfeld_centre} is determining an inverse to the comparison functor \(\Sigma^{(Z_T)} \from \ZCat[T]{C} \to \cat C^{Z_T}\).
This construction will also play a substantial role in our monadic description of the anti-Drinfeld centre, hence why we recall it in its full generality.
Let \(T \from \cat C \to \cat C\) be a centralisable oplax monoidal endofunctor with \((Z_T, \chi)\) as its centraliser.
To every module \((M, \vartheta_M)\) over \(Z_T\) we associate a half-braiding \(\sigma_{M, \blank} \from M \otimes \blank \to T(\blank) \otimes M\).
For any \(X \in \cat C\) it is given by the composition
\begin{equation}\label{eq: free_half_braiding}
  \tikzsetnextfilename{braiding_from_universal_coaction}%
  \input{sections/tikzfigures/braiding_from_universal_coaction.tikz}%

\end{equation}
This yields a functor \(E^{(Z_T)} \from \cat C^{Z_T} \to \ZCat[T]{C}\) which is the identity on morphisms and on objects is given by
\begin{equation}\label{eq: inverse_comparison_functor}
  E^{(Z_T)}(M, \vartheta_{M}) = (M, \sigma_{M,\blank}), \qquad \text{ for all }(M, \vartheta_{M}) \in \cat C^{Z_T}.
\end{equation}

Conversely, we assign to every object \((M, \sigma_{M, \blank}) \in \ZCat[T]{C}\) a module over \(Z_T\) whose action \(\vartheta_M\) is uniquely defined by
\begin{equation}
  \tikzsetnextfilename{action_from_braiding}%
  \input{sections/tikzfigures/action_from_braiding.tikz}%

\end{equation}
As it turns out, this yields the comparison functor \(\Sigma^{(Z_T)} \from \ZCat[T]{C} \to \cat C^{Z_T}\).

\begin{remark}\label{rem: adjoint_from_inverse_of_the_comparison_functor}
  Suppose \(T \from \cat C \to \cat C\) to be a centralisable oplax monoidal endofunctor with  \((Z_T, \chi)\) as its centraliser.
  Denote the free functor of the \EilenbergMoore adjunction of  \(Z_T\) by \(F^{Z_T} \from \cat C \to \cat C^{Z_T}\).
  The composition
  \begin{equation}
    \cat C \trightarrow{F^{Z_T}} \cat C^{Z_T} \trightarrow{E^{(Z_T)}}  \ZCat[T]{C}
  \end{equation}
  defines a left adjoint of the forgetful functor \(U^{(T)} \from \ZCat[T]{C} \to \cat C\).
\end{remark}

We recall \cite[Theorem~5.12]{Bruguieres2012}, which proves the adjunction \(F^{(T)} \adjoint U^{(T)}\)  to be monadic.

\begin{proposition}\label{prop: inverse_comparison_functor}
  Assume \((Z_T, \chi)\) to be a centraliser of the oplax monoidal endofunctor  \(T \from \cat C \to \cat C\).
  The functor \(E^{(Z_T)} \from \cat C^{Z_T} \to \ZCat[T]{C}\) is an isomorphism of categories whose inverse is the comparison functor \(\Sigma^{(Z_T)} \from \ZCat[T]{C} \to \cat C^{Z_T}\).
\end{proposition}

\subsection{Centralisers and comodule monads}

We will now apply the methods of \Bruguieres and Virelizier to twisted centres for the purpose of obtaining a comodule monad that implements the anti-Drinfeld centre.
Hereto, we need a generalised version of the concept of modules over a monad.
Our approach is based on \cite{Mesablishvili2011}.

\begin{definition}\label{def: oplax_right_action}
  Suppose \((B, \mu, \eta) \from \cat{C} \to \cat{C}\) to be a bimonad and \(F \from \cat{C} \to \cat{D}\) an oplax monoidal functor.
  An \emph{oplax monoidal right action} of \(B\) on \(F\) is an oplax natural transformation
  \(\alpha \from FB \to F\), such that the following diagrams commute:
  \[
    \begin{tikzcd}
      {FB^2} & FB & F & FB \\
      FB & F && F
      \arrow["F\mu", from=1-1, to=1-2]
      \arrow["\alpha", from=1-2, to=2-2]
      \arrow["{\alpha B}"', from=1-1, to=2-1]
      \arrow["\alpha"', from=2-1, to=2-2]
      \arrow["F\eta", from=1-3, to=1-4]
      \arrow["{\id_F}"', from=1-3, to=2-4]
      \arrow["\alpha", from=1-4, to=2-4]
    \end{tikzcd}
  \]
\end{definition}

Similarly, we could define oplax monoidal left actions.
A prime example of the latter is given by the forgetful functor \(U^B \from \cat{C}^B \to \cat{C}\) of a bimonad \(B \from \cat{C} \to \cat{C}\) together with the action
\[
  \vartheta \defeq U^B(\epsilon) \from B U^B = U^B F^B U^B \to U^B.
\]

To keep our notation concise, in the following we fix an oplax monoidal functor \(L \from \cat{C} \to \cat{C}\) with an oplax right action \(\alpha \from LB \to L\) by a bimonad \(B\from \cat{C} \to \cat{C}\) and assume that \(L\) and \(B\) are centralisable.
Their centralisers will be denoted by \((Q, \xi)\) and \((Z, \chi)\), respectively.

We think of \(\ZCat[B]{C}\) as a more general version of the Drinfeld centre
which is supposed to act on  \(\ZCat[L]{C}\) from the right.
To emphasise this, and in line with the colouring scheme of Section~\ref{sec: pivotality_of_the_df_centre}, we use black for objects in \(\cat{C}\) or its generalised Drinfeld centre and blue for objects in \(\ZCat[L]{C}\).

Consider two objects \((M, \sigma_{M,\blank}) \in \ZCat[L]{C}\) and \((X, \sigma_{X,\blank}) \in \ZCat[B]{C}\).
The action of \(B\) on \(L\), combined with the half-braidings of \(M\) and \(X\), yields a natural transformation
\begin{equation}\label{eq: right_action_of_gen_centre}
  \tikzsetnextfilename{right_action_of_H_centre_on_T_centre}%
  \input{sections/tikzfigures/right_action_of_H_centre_on_T_centre.tikz}%

\end{equation}

\begin{lemma}\label{lem: right_action_through_centralisers}
  The centre \(\ZCat[B]{C}\) acts on \(\ZCat[L]{C}\)
  from the right by tensoring the underlying objects and gluing together the half-braidings as in Equation~\eqref{eq: right_action_of_gen_centre}.
  With respect to this action, the forgetful functor \(U^{(L)} \from \ZCat[L]{C} \to \cat C \) is a strict comodule functor over \(U^{(B)} \from  \ZCat[B]{C} \to \cat C\).
\end{lemma}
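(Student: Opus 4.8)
The plan is to first check that the gluing of half-braidings in Diagram~\eqref{eq: right_action_of_gen_centre} genuinely lands in $\ZCat[L]{C}$, and then to recognise the resulting assignment as a \emph{strict} comodule structure on $U^{(L)}$. Concretely, for objects $(M, \sigma_{M, \blank}) \in \ZCat[L]{C}$ and $(X, \sigma_{X, \blank}) \in \ZCat[B]{C}$ I would set $M \ract X \defeq M \otimes X$ and equip it with the natural transformation
\[
	\sigma_{M \ract X, Y} \defeq (\alpha_Y \otimes \id_M \otimes \id_X)(\sigma_{M, B(Y)} \otimes \id_X)(\id_M \otimes \sigma_{X, Y}), \qquad Y \in \cat C,
\]
as read off from Diagram~\eqref{eq: right_action_of_gen_centre}: the strand $X$ is slid past $Y$ producing a $B(Y)$, the strand $M$ is then slid past $B(Y)$ producing $L(B(Y))$, and the action $\alpha_Y \from L(B(Y)) \to L(Y)$ collapses the composite. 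The first task is to show that $\sigma_{M \ract X, \blank}$ is a half-braiding in the sense of Remark~\ref{rem: twisted_centre_of_Hopf_monad_is_rigid}, i.e.\ that it is compatible with $\Delta^{(L)}$ and $\varepsilon^{(L)}$.

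For the hexagon identity I would expand $\sigma_{M \ract X, Y \otimes W}$, apply the hexagon axiom of $\sigma_{X, \blank}$ (which introduces $\Delta^{(B)}$) to separate the $X$-strand, then the hexagon axiom of $\sigma_{M, \blank}$ evaluated at $B(Y)$ and $B(W)$ to separate the $M$-strand, and finally invoke the oplax monoidality of $\alpha \from LB \to L$ in the form $\Delta^{(L)}_{Y,W}\alpha_{Y \otimes W} = (\alpha_Y \otimes \alpha_W)\Delta^{(LB)}_{Y,W}$ to fuse the two copies of $\alpha$ with $\Delta^{(L)}$. The counitality $(\varepsilon^{(L)} \otimes \id_{M \otimes X})\sigma_{M \ract X, 1} = \id_{M \otimes X}$ follows in the same way from the counit axioms of $\sigma_{M, \blank}$, $\sigma_{X, \blank}$ and the compatibility of $\alpha$ with $\varepsilon^{(L)}$. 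This interleaving of three compatibility laws—two hexagons and the oplax monoidal structure of $\alpha$—is the main obstacle; it is cleanest in the graphical calculus of Section~\ref{subsec: comodule_monads}, where the computation amounts to sliding the two half-braidings and $\alpha$ past one another.

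Granting well-definedness, functoriality in both variables is immediate from naturality of the half-braidings and the morphism condition \eqref{eq: def_rel_morphism_in_centre}, and the remaining module axioms are routine: unitality $M \ract \mathbf 1 = M$ reduces, via naturality of $\sigma_{M,\blank}$, to the unit axiom $\alpha_Y L(\eta_Y) = \id_{L(Y)}$ of Equation~\eqref{eq: oplax_right_action}, while associativity $(M \ract X) \ract X' = M \ract (X \otimes X')$ reduces to its multiplicativity axiom, which fuses the two copies of $\alpha$ produced by iterating the construction. For the final assertion, observe that on underlying objects $U^{(L)}(M \ract X) = M \otimes X = U^{(L)}(M) \ract U^{(B)}(X)$, so the identity furnishes a coaction $\delta^{(U^{(L)})}_{M, X} \defeq \id_{M \otimes X}$. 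Since $U^{(B)}$ is strict monoidal, its comultiplication and counit are identities, whence the coassociativity~\eqref{eq: coass_of_coaction} and counitality~\eqref{eq: counit_of_coaction} of $\delta^{(U^{(L)})}$ hold trivially. Therefore $U^{(L)}$ is a strict comodule functor over $U^{(B)}$.
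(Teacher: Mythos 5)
Your proposal is correct and follows essentially the same route as the paper: you verify that the glued transformation satisfies the hexagon and counit axioms of a half-braiding for $\ZCat[L]{C}$ by interleaving the two hexagon identities with the oplax monoidality of $\alpha$, deduce the module axioms from the compatibility of $\alpha$ with the multiplication and unit of $B$, and observe that the action is the plain tensor product on underlying objects, so the identity coaction makes $U^{(L)}$ a strict comodule functor over $U^{(B)}$. The only difference is presentational: the paper carries out the hexagon and unit checks in the graphical calculus, whereas you spell out the same composites in formulas.
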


A proof is given in Section~\ref{sec:proof:right_action_through_centralisers}.

We extend our colouring scheme to universal coactions and write
\begin{equation}\label{eq: colouring_scheme_universal_coactions}
  \tikzsetnextfilename{universal_coaction_centre_and_twisted_centre}%
  \input{sections/tikzfigures/universal_coaction_centre_and_twisted_centre.tikz}%

\end{equation}

The identification of \(\cat C^Z\) and \(\cat C^Q\) with the generalised Drinfeld centre and its twisted cousin suggest that \(Q\) is a comodule monad over \(Z\).
In analogy with Equation~\eqref{eq: comult_of_centraliser}, we define a candidate for the coaction of \(Q\) by
\begin{equation}\label{eq: coaction_anti_central_monad}
  \tikzsetnextfilename{coaction_anti_central_comodule_monad}%
  \input{sections/tikzfigures/coaction_anti_central_comodule_monad.tikz}%

\end{equation}

A proof of the next result may be found in Section~\ref{sec:proof:construction_anti_central_comonad}.

\begin{proposition}\label{prop: construction_anti_central_comonad}
  Let \(\alpha \from LB \to L\) be an oplax monoidal right action of a bimonad \(B \from \cat C \to \cat C\) on an oplax monoidal functor \(L \from \cat C \to \cat C\).
  Suppose furthermore that the centralisers \((Q, \xi)\) of \(L\)  and  \((Z, \chi)\) of \(B\) exist.
  The coaction  of Equation~\eqref{eq: coaction_anti_central_monad} turns \(Q\) into a comodule monad over \(Z\) such that \(\cat C^Q\) is isomorphic as a right module category over \(\cat C^Z\) to \(\ZCat[L]{C}\).
\end{proposition}

Let us apply our findings to the identity and double dual functor of a rigid category \(\cat C\).
Suppose \((Q, \xi)\) and  \((Z, \chi)\) to be the centralisers of \(\lbiDual{(\blank)}\) and \(\Id_{\cat C}\from \cat C \to \cat C\), respectively.
There is a trivial right action of the identity of \(\cat C\) on its double dual functor,
\begin{equation*}
  \id_X \from\lbiDual{(\Id_{\cat C} (X))} \to \lbiDual{X}, \qquad \qquad \text{for all } X \in \cat C.
\end{equation*}
It turns \(Q\) into a comodule monad over \(Z\) and its modules \(\cat C^Q\) are isomorphic to \(\QCat C\) as a \(\cat C^Z\)-module category.
Due to Remark~\ref{rem: right_vs_left_twisted_centres}, we can identify \(\QCat C\) with \({\ACat{\overline{C}}}^{\op} \), justifying our next definition.

\begin{definition}\label{def: anti_central_comodule_monad}
  Assume \(\lbiDual{(\blank)}, \Id_{\cat C}\from \cat C \to \cat C\) to admit centralisers \((Q, \xi)\) and \((Z, \chi)\).
  We call \(\mathfrak{Q}(\cat C) \defeq (Q, \mu^{(Q)}, \eta^{(Q)}, \delta^{(Q)})\) the \emph{anti-central} comodule monad of \(\cat C\).
\end{definition}

\subsection{The Drinfeld and anti-Drinfeld double of a Hopf monad}\label{subsec: Drinfeld_anti-Drinfeld_double}

We are now able to untangle the relationship between the various adjunctions and categories displayed in Figure~\ref{fig: twisted_centres_and_monads}.
To that end, we fix a Hopf monad \(H \from \cat V \to \cat V\) on a rigid category \(\cat V\) together with an oplax monoidal functor \(L \from \cat V^H \to \cat V^H\), a bimonad \(B \from \cat V^H\to \cat V^H\) and an oplax monoidal right action \(\alpha \from LB \to B\).
Furthermore, we assume that the cross products \(B \rtimes H\) and \(L \rtimes H\) have centralisers \((Z_H, \nu)\) and \((Q_H, \tau)\).

We start by the following observation,
extending the action of \(B\) on \(L\) to an action of the respective cross products.
It follows by a straightforward calculation.

\begin{lemma}\label{lem: cross_products_and_oplax_monoidal_actions}
  The action \(\alpha \from LB \to B\) induces an oplax monoidal action
  \begin{equation}
  \tikzsetnextfilename{extension_of_action_to_cross_products}%
  \input{sections/tikzfigures/extension_of_action_to_cross_products.tikz}%

  \end{equation}
\end{lemma}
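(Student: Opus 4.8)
The plan is to write down the induced action explicitly and then verify, one by one, the axioms of Definition~\ref{def: oplax_right_action}. Write $\epsilon^{(H)} \from F_H U_H \to \Id_{\cat V^H}$ for the counit of the \EilenbergMoore adjunction $\adj{F_H}{U_H}{\cat V}{\cat V^H}$, which is monoidal by Lemma~\ref{lem: monoidal_adjunctions_vs_bimonads}. In the composite $(L \rtimes H)(B \rtimes H) = U_H L F_H U_H B F_H$ the inner pair $F_H U_H$ can be collapsed by $\epsilon^{(H)}$, after which $\alpha$ is applied; concretely, for $X \in \cat V$ I set
\[
  \tilde\alpha_X \defeq U_H\bigl(\alpha_{F_H(X)}\bigr)\, U_H L\bigl(\epsilon^{(H)}_{B F_H(X)}\bigr) \from (L \rtimes H)(B \rtimes H)(X) \to (L \rtimes H)(X).
\]
This is exactly the sheet diagram displayed in the statement, and its naturality is automatic from that of $\alpha$ and $\epsilon^{(H)}$.

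Next I would verify that $\tilde\alpha$ is a monad action, \ie the two identities of Equation~\eqref{eq: oplax_right_action} for the pair $B \rtimes H$, $L \rtimes H$. Recall that the multiplication and unit $\mu^{(B \rtimes H)}, \eta^{(B \rtimes H)}$ are assembled from $\mu^{(B)}, \eta^{(B)}$ and the (co)unit of $F_H \adjoint U_H$ as in Equation~\eqref{eq: mult_and_unit_of_cross_product}. Associativity of $\tilde\alpha$ therefore unfolds into a diagram in which two copies of $\epsilon^{(H)}$ meet $\mu^{(B)}$; sliding the counits past each other by naturality and cancelling the unit–counit pair introduced by $\mu^{(B \rtimes H)}$ via the snake identities of Equation~\eqref{eq: snake_identity_adjunction_string_diagram}, the whole thing reduces to the associativity axiom of $\alpha$. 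Unitality reduces in the same manner to the unit axiom of $\alpha$ together with one further application of the snake identity.

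The main obstacle is oplax monoidality, \ie compatibility of $\tilde\alpha$ with the comultiplications and counits of the two cross products given in Equation~\eqref{eq: comult_and_counit_on_cross_product}. Here the two facts to exploit are that $\alpha$ is oplax monoidal by hypothesis and that $\epsilon^{(H)}$ is an oplax monoidal natural transformation, so that it commutes with comultiplications as recorded in Equation~\eqref{eq: counit_monoidal_adjunction}. The verification is a diagram chase: starting from $\Delta^{(B \rtimes H)}$ precomposed with $\tilde\alpha$, I would push the comultiplication first through $\epsilon^{(H)}$ using its comonoidality and then through $\alpha$ using the oplax monoidal naturality condition of Definition~\ref{def: mon_nat_trafo}, and finally recognise the result as $\Delta^{(L \rtimes H)}$ precomposed with two copies of $\tilde\alpha$; the counit compatibility is the analogous, shorter computation. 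Keeping the comultiplications of the cross products from tangling with the two nested uses of $\epsilon^{(H)}$ is the one genuinely delicate piece of bookkeeping.

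Conceptually, all three verifications are instances of a single principle: the assignment $T \mapsto U_H T F_H$ is a lax monoidal functor $(\Endspace(\cat V^H), \circ) \to (\Endspace(\cat V), \circ)$, whose laxity constraint is built from $\epsilon^{(H)}$ and $\eta^{(H)}$ and which is compatible with the oplax monoidal structures; it sends the monad $B$ to $B \rtimes H$ and transports the right $B$-action $\alpha$ on $L$ to the right $(B \rtimes H)$-action $\tilde\alpha$ on $L \rtimes H$. Phrasing the argument this way makes the monoidality bookkeeping essentially formal, but to remain within the paper's diagrammatic framework I would in the end carry out the explicit sheet-diagram computations sketched above.
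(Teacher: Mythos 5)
Your proposal is correct and follows essentially the same route as the paper: you define the induced action by collapsing the inner $F_H U_H$ with the counit of the \EilenbergMoore adjunction and then applying $\alpha$, exactly as the paper does, and both the action axioms and the oplax monoidality come down to the observation (which you make explicit at the end) that the whole composite is built from oplax monoidal natural transformations. The paper's own proof is just a two-sentence version of this, deferring the bookkeeping to the pictorial description of the cross product; your more detailed diagram chases are a faithful expansion of the same argument.
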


The following variant of \cite[Theorem~7.4]{Bruguieres2012} lies at the heart of our investigation.

\begin{theorem}\label{thm: centralisablity_of_lifts}
  Both \(B, L\from \cat V^H\to \cat V^H\) admit centralisers \((Z, \chi)\) and \((Q, \xi)\)
  such that \(Z\) is a lift of \(Z_H\) as a bimonad and \(Q\) is a lift of \(Q_H\) as a comodule monad.
\end{theorem}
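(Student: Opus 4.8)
The plan is to realise both $Z$ and $Q$ as lifts along the monadic forgetful functor $U_H \from \cat V^H \to \cat V$ of the centralisers $(Z_H, \nu)$ and $(Q_H, \tau)$ of the cross products $B \rtimes H$ and $L \rtimes H$ (the ambient category $\cat V^H$ being rigid since $H$ is a Hopf monad, so that the centralisers of $B$ and $L$ are meaningful objects there). The bimonad half of the claim mirrors the proof of \cite[Theorem~7.4]{Bruguieres2012}: I would first produce a monoidal distributive law $\Lambda \from H Z_H \to Z_H H$ and feed it into Theorem~\ref{thm: distributive_laws_are_lifts_of_monads} to obtain a bimonad $Z \from \cat V^H \to \cat V^H$ lifting $Z_H$. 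One then checks that this lift is the centraliser of $B$ by showing that $\nu$ itself lifts to a universal coaction $\chi$ on $\cat V^H$. Since $U_H$ is faithful and conservative, universality of the lifted cowedge can be tested after applying $U_H$, where it collapses to the universal property of $Z_H$.

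The transformation $\Lambda$ is obtained from the extended factorisation property of Lemma~\ref{lem: extended_factorisation_property}. Precomposing the universal coaction $\nu$ of $Z_H$ with the multiplication of $H$ yields a natural transformation landing in $(B \rtimes H)(\blank) \otimes Z_H H(\blank)$, which factors uniquely through $\nu$ and thereby defines $\Lambda$. The distributive-law axioms and the oplax monoidality of $\Lambda$ then follow purely formally from the uniqueness clause of the factorisation, exactly as in \cite{Bruguieres2012}.

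For the comodule half I would run the same construction one level up. By Lemma~\ref{lem: cross_product_of_bi/comonads_is_bi/comonad} together with the extended action $\alpha_H$ of Lemma~\ref{lem: cross_products_and_oplax_monoidal_actions}, the centraliser $Q_H$ is a comodule monad over $Z_H$. Applying the factorisation property to the universal coaction $\tau$ of $Q_H$ produces a second natural transformation $\Omega \from H Q_H \to Q_H H$, and I would verify that $(\Omega, \Lambda)$ is a comodule distributive law in the sense of Definition~\ref{def: comodule_distributive_law}---that is, that $\Omega$ is a distributive law and $(\Lambda, \Omega)$ a comodule natural transformation---once more by the uniqueness of factorisations. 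Theorem~\ref{thm: distributive_laws_are_lifts_of_monads} then upgrades $Q_H$ to a comodule monad $Q$ over $Z$ on $\cat V^H$, Lemma~\ref{lem: distributive_laws_and_comparison_functor} identifies the associated module structures, and lifting $\tau$ through $U_H$ as before exhibits $Q$ as the centraliser $(Q, \xi)$ of $L$.

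I expect the main obstacle to be the comodule compatibility: checking that $\Omega$ and $\Lambda$ genuinely assemble into a comodule distributive law, so that the single transformation $\Omega$ is at once compatible with the monad structures of $H$ and $Q_H$ and with the coaction of $Z_H$ on $Q_H$ coming from $\alpha_H$. This is precisely where the oplax monoidal action $\alpha$ intervenes, and where the interplay between the two universal coactions $\nu$ and $\tau$---and their colouring in the graphical calculus---has to be tracked with care. The universality statements for the lifted coactions are conceptually routine given the monadicity of $U_H$, but they too hinge on the relevant coends being preserved by the lift.
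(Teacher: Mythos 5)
Your proposal reaches the right statement but is organised the opposite way round from the paper. The paper does not re-derive the existence of the lifted centralisers at all: it cites \cite[Theorem~7.4]{Bruguieres2012} wholesale for the fact that $B$ and $L$ admit centralisers $(Z,\chi)$ and $(Q,\xi)$ on $\cat V^H$ whose underlying functors, universal coactions, and (bi)monad structures are computed by $Z_H$ and $Q_H$ after applying $U_H$. The only thing actually proved there is the single new assertion that the \emph{coaction} of $Q$ over $Z$ --- which both centralisers carry by Theorem~\ref{thm: construction_anti_central_comonad} --- satisfies $U_H(\delta^{(Q)}) = \delta^{(Q_H)}$, and this is done by one graphical computation plus the uniqueness clause of Lemma~\ref{lem: extended_factorisation_property}. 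You instead construct the distributive laws $\Lambda$ and $\Omega$ first and feed them into Theorem~\ref{thm: distributive_laws_are_lifts_of_monads} to manufacture the lifts; the paper runs that correspondence in the other direction, \emph{after} this theorem, to extract $\Lambda$ and $\Omega$ from the lifts it already has. Your route is viable and packages the comodule compatibility more conceptually (it drops out of the comodule-distributive-law axioms rather than being checked by hand), at the price of having to build $\Lambda$ and $\Omega$ explicitly, verify all their axioms, and then still identify the resulting lifts with the centralisers of $B$ and $L$.

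Two steps need repair. First, the fact that $Q_H$ is a comodule monad over $Z_H$ follows from Theorem~\ref{thm: construction_anti_central_comonad} applied to the extended action $\alpha_H$ of Lemma~\ref{lem: cross_products_and_oplax_monoidal_actions}; Lemma~\ref{lem: cross_product_of_bi/comonads_is_bi/comonad} is the wrong reference, since it concerns cross products of monads that are already comodule monads. Second, and more substantially, your claim that universality of the lifted cowedge ``can be tested after applying $U_H$, where it collapses to the universal property of $Z_H$'' is not correct as stated: the coend defining $Z$ is indexed by objects of $\cat V^H$, whereas the one defining $Z_H$ is indexed by objects of $\cat V$, so the two universal properties quantify over different categories and faithfulness of $U_H$ alone does not transport one to the other. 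Bridging them --- via the free functor $F_H$ and the presentation of $H$-modules as coequalisers of free modules --- is precisely the non-trivial content of \cite[Theorem~7.4(a)]{Bruguieres2012}; you should either cite that result, as the paper does, or supply this argument in full.
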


\begin{proof}
  By \cite[Theorem~7.4(a)]{Bruguieres2012}, we know that there are centralisers \((Q, \xi)\) and \((Z, \chi)\) of \(L\) and \(B\)
  that satisfy for all \((X, \vartheta_X), (Y, \vartheta_Y) \in \cat V^H\)
  \begin{align*}
    U^H Q (X, \vartheta_X) &= Q_H(X),   &
                                  U^H(\xi_{(X, \vartheta_X), (Y, \vartheta_Y)}) &= (U^HL(\vartheta_Y) \otimes  \id_{Q_H(X)})\tau_{X,Y}, \\
    U^H Z (X, \vartheta_X) &= Z_H(X),  &
                                 U^H(\chi_{(X, \vartheta_X), (Y, \vartheta_Y)}) &= (U^HB(\vartheta_Y) \otimes \id_{Z_H(X)})\nu_{X,Y}.
  \end{align*}
  The second and third part of the above mentioned theorem state that \(Q\) is a lift of the monad \(Q_H\) and \(Z\) is a lift of the bimonad \(Z_H\).
  It remains for us to show that the coactions of \(Q\) and \(Q_H\) are compatible with the forgetful functor \(U^H \from \cat V^H \to \cat V\).
  We fix objects \((X, \vartheta_X), (Y, \vartheta_Y) \in \cat V^H\) and \(W \in \cat V\) and compute:
  \begin{equation*}
  \tikzsetnextfilename{compatibility_coactions_cross_product}%
  \input{sections/tikzfigures/compatibility_coactions_cross_product.tikz}%

  \end{equation*}
  The uniqueness property of universal coactions as given in Lemma~\ref{lem: extended_factorisation_property} then implies that \(U^H(\delta^{(Q)}_{(X, \vartheta_X), (Y, \vartheta_Y)}) = \delta^{(Q_H)}_{X,Y}\).
  Since \(U^{H}\from \cat V^H \to \cat V\) is a strict comodule functor, the claim follows.
\end{proof}

The previous theorem together with Lemma~\ref{lem: cross_product_of_bi/comonads_is_bi/comonad} imply that we obtain a comodule monad \(D(L,H)\defeq Q \rtimes H\) over \(D(B, H) \defeq Z \rtimes H\).
The correspondence between lifts and monads given in Proposition~\ref{prop: distributive_laws_are_lifts_of_monads} yields a unique comodule distributive law
\[
  (H Q_H  \trightarrow{\;\Omega\;} Q_H H,\; H Z_H \trightarrow{\;\Lambda\;} Z_H H)
\]
such that
\begin{equation}
  D(L, H) = Q_H \circ_{\Omega} H \qquad \text{and} \qquad
  D(B, H) = Z_H \circ_{\Lambda} H.
\end{equation}

\begin{definition}\label{def: double-and-twisted-double}
  We call \(D(B,H)\) and \(D(L,H)\) the \emph{double} and \emph{twisted double} of the pairs \((B,H)\) and \((L,H)\).
\end{definition}

The relationship between doubles and generalised Drinfeld centres is explained in \cite[Proposition~7.5 and Theorem~7.6]{Bruguieres2012}.
Our next result uses the same techniques to prove how twisted doubles parameterise twisted centres.

\begin{theorem}\label{thm: twisted_doubles_vs_twisted_centres}
  The twisted double \(D(L, H)\) is a comodule monad over \(D(B,H)\) and \(\cat V^{D(L, H)}\) is isomorphic as a module category over \(\cat V^{D(B, H)}\) to
  \(\ZCat*[L]{\cat V^H}\).
\end{theorem}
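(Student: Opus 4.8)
The plan is to assemble the statement from the three results established immediately beforehand, treating it as a composition of isomorphisms rather than reproving anything from scratch. The first assertion, that $D(L,H) = Q \rtimes H$ is a comodule monad over $D(B,H) = Z \rtimes H$, is essentially already recorded in the paragraph preceding the theorem: Theorem~\ref{thm: centralisablity_of_lifts} supplies the lift $Z$ of $Z_H$ as a bimonad and the lift $Q$ of $Q_H$ as a comodule monad over it, Lemma~\ref{lem: cross_products_and_oplax_monoidal_actions} extends the action $\alpha$ to an oplax monoidal action of $B \rtimes H$ on $L \rtimes H$, and Lemma~\ref{lem: cross_product_of_bi/comonads_is_bi/comonad} then promotes the cross product $Q \rtimes H$ to a comodule monad over $Z \rtimes H$. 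For this part I would simply invoke these citations.

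For the module-category isomorphism I would proceed in three steps. First, I apply Theorem~\ref{thm: construction_anti_central_comonad} with $\cat C = \cat V^H$ and the given data $L$, $B$, $\alpha$, whose centralisers $(Q,\xi)$ and $(Z,\chi)$ on $\cat V^H$ are exactly those produced by Theorem~\ref{thm: centralisablity_of_lifts}; this yields an isomorphism $(\cat V^H)^{Q} \cong \ZCat*[L]{\cat V^H}$ of module categories over $(\cat V^H)^{Z}$, where the latter is identified with $\ZCat*[B]{\cat V^H}$ by Theorem~\ref{thm: inverse_comparison_functor}. Second, using the comodule distributive law $(\Omega,\Lambda)$ extracted from the lifts via Theorem~\ref{thm: distributive_laws_are_lifts_of_monads}, for which $D(L,H) = Q_H \circ_{\Omega} H$ and $D(B,H) = Z_H \circ_{\Lambda} H$, I apply Lemma~\ref{lem: distributive_laws_and_comparison_functor}: its first part gives a monoidal isomorphism $(\cat V^H)^{Z} \cong \cat V^{D(B,H)}$ and its second part an isomorphism $(\cat V^H)^{Q} \cong \cat V^{D(L,H)}$ of module categories over $\cat V^{D(B,H)}$. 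Third, I compose to obtain $\cat V^{D(L,H)} \cong (\cat V^H)^{Q} \cong \ZCat*[L]{\cat V^H}$, transporting the module structures along the monoidal isomorphism $\cat V^{D(B,H)} \cong (\cat V^H)^{Z}$.

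The only genuine point to verify is that the three module structures in play---the $\cat V^{D(B,H)}$-action on $\cat V^{D(L,H)}$, the $(\cat V^H)^{Z}$-action on $(\cat V^H)^{Q}$, and the $\ZCat*[B]{\cat V^H}$-action on $\ZCat*[L]{\cat V^H}$---are intertwined by the isomorphisms above, so that the composite is an equivalence of module categories and not merely of underlying categories. I expect this to be the main, though still largely routine, obstacle: it amounts to checking that the comodule distributive law $(\Omega,\Lambda)$ obtained from the lifts encodes the same coaction as the cross-product comodule-monad structure on $Q \rtimes H$. This coherence is forced by the uniqueness clauses in Theorems~\ref{thm: distributive_laws_are_lifts_of_monads} and~\ref{thm: construction_anti_central_comonad}, together with the faithfulness and conservativity of the forgetful functors $U_H$, $U^{(L)}$ and $U^{(B)}$. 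I would therefore phrase the argument so that each named isomorphism is respected on the nose by the corresponding forgetful functor, reducing the comparison of actions to an identity of underlying objects and coactions; conservativity of those functors then closes the verification and upgrades the underlying isomorphism to the desired isomorphism of $\cat V^{D(B,H)}$-module categories.
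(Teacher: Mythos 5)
Your proposal is correct and follows essentially the same route as the paper: the comodule-monad structure on $D(L,H)=Q\rtimes H$ is read off from the lift statement of Theorem~\ref{thm: centralisablity_of_lifts} together with Lemma~\ref{lem: cross_product_of_bi/comonads_is_bi/comonad}, and the module-category isomorphism is the composite $\cat V^{D(L,H)}\to(\cat V^H)^{Q}\to\ZCat*[L]{\cat V^H}$ obtained from Lemma~\ref{lem: distributive_laws_and_comparison_functor} and (the proof of) Theorem~\ref{thm: construction_anti_central_comonad}, exactly as in the paper. The compatibility of actions you flag as the ``only genuine point to verify'' is already packaged into those two cited results (each asserts its isomorphism as one of module categories over the appropriately identified acting category), so no further argument is needed.
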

\begin{proof}
  Since \(Q\) is a lift of \(Q_H\) as a comodule monad, the twisted double \(D(L, H)\) is a comodule monad over \(D(B,H)\).
  By Lemma~\ref{lem: distributive_laws_and_comparison_functor}, this implies the existence of an isomorphism of \(\cat V^{D(B, H)}\)-module categories \(K^{(\Omega)} \from \cat V^{D(L, H)} \to \left(\cat V^H\right)^Q\).
  Due to the proof of Proposition~\ref{prop: construction_anti_central_comonad} the comparison functor \(\Sigma^{(Q)}\from  \ZCat*[L]{\cat V^H} \to  \left(\cat V^H\right)^Q\) implements an isomorphism of module categories and the statement follows by considering
  \begin{equation}
    \cat V^{D(L, H)} \trightarrow{K^{(\Omega)}}
    \left(\cat V^H\right)^Q \trightarrow{E^{(Q)}}
    \ZCat*[L]{\cat V^H}.
  \end{equation}

\end{proof}

\begin{definition}\label{def: double_of_bimonad}
  Suppose \(B = \Id_{\cat C^K}\) and \(L=\lbiDual{(\blank)} \from \cat V^H \to \cat V^H\).
  We refer to \(D(H) \defeq D(B,H)\) and \(Q(H) \defeq D(L,H)\) as the \emph{Drinfeld} and \emph{anti-Drinfeld double} of \(H\).
\end{definition}

Our previous definition can be understood as an extension of the notion of the anti-Drinfeld double given by \cite{Hajac2004a} to the monadic framework.

\subsection{Pairs in involution for Hopf monads}\label{subsec: monadic_perspective_piv_structures}

For the final step in our investigation, let us consider a Hopf monad \(H \from \cat V \to \cat V\) which admits a double and anti-double.
Tracing the various identifications of the centre and anti-centre of a monoidal category given in Figure~\ref{fig: twisted_centres_and_monads}, we observe that module functors from \(\ZCat*{\cat V^H}\) to \(\QCat*{\cat V^H}\) equate bidirectionally to module functors between \(\cat V^{D(H)}\) and \(\cat V^{Q(H)}\).
In the spirit of viewing \(D(H)\) and \(Q(H)\) as `coordinate systems' of their respective modules, we want to translate such functors into comodule monad morphisms.
Our main focus here is on pivotal structures of \(\cat V^{D(H)}\).

We begin by developing the notion of pairs in involution for a Hopf monad.
Classically, pairs in involution consist of a group-like and character of a Hopf algebra, which implement the square of its antipode by their adjoint actions.

\begin{definition}\label{def: character_and_group-like}
  Let \(H \from \cat V \to \cat V\) be a Hopf monad.
  A \emph{character} of \(H\) is a module \(\beta \defeq (1, \vartheta_{\beta}) \in \cat{V}^H\), whose underlying object is the monoidal unit of \(\cat{V}\).

  A \emph{group-like} element of \(H\) is an opmonoidal natural transformation \(g \from \Id_{\cat V} \to H\).
  More explicitly, for all \(X, Y \in \cat V\)
  \begin{equation} \label{eq:2}
    \Delta^{(H)}_{X,Y}g_{X\otimes Y} = g_X \otimes g_Y \qquad \text{ and } \qquad  \varepsilon^{(H)} g_1= \id_1.
  \end{equation}

  We write \(\Char(H)\) for the characters of \(H\) and \(\Gr(H)\) for its group-likes.
\end{definition}

Note that the characters \(\Char(H)\) of a Hopf-monad \(H \from \cat V \to \cat V\) form a monoid and, by Lemma~\cite[Lemma~3.21]{Bruguieres2007}, the set \(\Gr(H)\) of group-like elements bears a group structure.

Furthermore, the group-likes of a Hopf monad \(H\) act on it by conjugation.
We recall this construction based on \cite[Section~1.4]{Bruguieres2007}.
Given a natural transformation \(g \from \Id_{\cat V} \to H\), we define the \emph{left} and \emph{right regular action} of \(g\) on \(H\) to be the natural transformations defined for every \(X \in \cat V\) by
\begin{align}\label{eq: left_and_right_regular_action}
  L_{g, X} &\defeq H(X) \trightarrow{g_{H(X)}} H^2(X) \trightarrow{\mu^{(H)}_X} H(X), \\
  R_{g, X} &\defeq H(X) \trightarrow{H(g_X)} H^2(X) \trightarrow{\mu^{(H)}_X} H(X).
\end{align}

Before we state our next definition, we set for all \(X,Y, W \in \cat V\)
\begin{equation}\label{eq: iterated_coproducts}
  \Delta^{(H)}_{X,Y,W} \defeq(\Delta^{(H)}_{X,Y} \otimes \id_{H(W)}) \Delta^{(H)}_{X\otimes Y, W} = (\id_{H(X)}\otimes \Delta^{(H)}_{Y, W}) \Delta^{(H)}_{X, Y \otimes W}.
\end{equation}

\begin{definition}\label{def: adjoint_action}
  Every group-like \(g \in \Gr(H)\) and character \(\beta \in \Char(H)\) of a Hopf monad \(H \from \cat V \to \cat V\) give rise to natural transformations
  \begin{align}\label{eq: adjoint_action}
    \Ad_{g, X} &\defeq L_{g,X}R_{g^{-1}, X} \from H(X) \to H(X), &\text{ for all } X \in \cat V, \\
    \Ad_{\beta, X} &\defeq (\vartheta_{\beta}\otimes \id_{H(X)} \otimes \vartheta_{\lDual \beta}) \Delta^{(H)}_{1, X,1} \from H(X) \to H(X), &\text{ for all } X \in \cat V.
  \end{align}
  called the \emph{adjoint actions} of \(g\) and \(\beta\) on \(H\), respectively.
\end{definition}

To define pairs in involution, we need the `square of the antipode'.
This notion was developed in \cite[Section~7.3]{Bruguieres2007}.

\begin{definition} \label{def: s-squared}
  Suppose \(\phi \from \Id_{\cat V} \to \lbiDual{(\blank)}\) to be a pivotal structure on \(\cat V\) and let \(H \from \cat V \to \cat V\) be a Hopf monad.
  The \emph{square of the antipode} of \(H\) is a natural transformation \(S^{2}\from H \to H\)
  that is defined for every \(X \in \cat V\) by
  \begin{equation}
    S^2_X \defeq \phi^{-1}_{H(X)} s^l_{\lDual{H(X)}} H(\lDual{s^l_X}) H(\phi_X),
  \end{equation}
  where \(s^l\) is the left antipode of \(H\),
  see Equation~\eqref{eq: left_and_right_antipode} and \cite[Section~3.3]{Bruguieres2007}.
\end{definition}

Analogous to the Hopf algebraic case, we state the following:

\begin{definition}\label{def: pair_in_involution_Hopf_monad}
  Let \(H \from \cat{V} \to \cat{V}\) be a Hopf monad,
  and \(\phi \from \Id_{\cat V} \to \lbiDual{(\blank)}\) a pivotal structure.
  A \emph{pair in involution} of \(H\) and \(\phi\) comprises a group-like \(g \in \Gr(H)\) and a character \(\beta \in \Char(H)\), such that for all \(X \in \cat{V}\)
  \begin{equation}\label{eq: antipode_condition}
    Ad_{g, X}=  Ad_{\beta,X}S^{2}_{X}.
  \end{equation}
  We write \((g, \beta) \in \mathrm{PI}_H^{\phi}\).
\end{definition}

To prove that pairs in involution correspond to certain pivotal structures on the Drinfeld centre of \(\cat V^H\), we need two technical results.
The first one is classical; for a proof see for example \cite[Lemmas~1.2 and~1.3]{Bruguieres2007}.

\begin{lemma} \label{lem: sharp-lift}
  Let \(H\) be a monad with associated forgetful functor \(U^H \from  \cat V^H \to \cat V\).
  Suppose that \(F, G \from \cat{V} \to \cat{W}\) are functors, for some category \(\cat{W}\).
  There is a canonical bijection
  \begin{equation}\label{eq: sharp-lift}
    (\blank)^{\sharp}\from \Nat(F, GH) \to \Nat(FU^H, GU^H), \qquad f \mapsto f^{\sharp},
  \end{equation}
  where \(f^{\sharp}_{(M, \vartheta_M)} = G(\vartheta_M) f_M\).
\end{lemma}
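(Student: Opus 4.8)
The plan is to realise $f^{\sharp}$ as a whiskered composite of $f$ with the \EilenbergMoore counit and then to exhibit an explicit two-sided inverse built from free modules. Writing $\epsilon \from F_H U_H \to \Id_{\cat V^H}$ for the counit, whose component at $(M, \vartheta_M)$ is $\vartheta_M$, the assignment is exactly $f^{\sharp} = U_H(\epsilon)\,(f U_H)$, i.e.\ $f$ whiskered by $U_H$ and then followed by the action $\vartheta = U_H(\epsilon) \from H U_H \to U_H$. First I would check that $f^{\sharp}$ really is a natural transformation $H U_H \to U_H$: for a module morphism $\phi \from (M, \vartheta_M) \to (N, \vartheta_N)$ the required naturality square reads $\phi\,\vartheta_M f_M = \vartheta_N f_N H(\phi)$, and this follows by combining the module-morphism identity $\phi \vartheta_M = \vartheta_N H(\phi)$ with the naturality square of $f$ at the underlying morphism $\phi$. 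This part is routine bookkeeping.

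The substance of the argument is the inverse. For $\Phi \in \Nat(H U_H, U_H)$ I would set $\Phi^{\flat}_X \defeq \Phi_{F_H(X)}\,H(\eta^{(H)}_X) \from H(X) \to H(X)$, where $F_H(X) = (H(X), \mu^{(H)}_X)$ so that $\Phi_{F_H(X)} \from H^2(X) \to H(X)$ and the composite is well typed. Naturality of $\Phi^{\flat}$ in $X$ follows from naturality of $\Phi$ at the free morphisms $F_H(\psi)$ together with the naturality of the unit $\eta^{(H)}$. That $(\blank)^{\flat}$ is a left inverse is a direct computation: evaluating on a free module gives $f^{\sharp}_{F_H(X)} = \mu^{(H)}_X f_{H(X)}$, and precomposing with $H(\eta^{(H)}_X)$ lets the naturality of $f$ at $\eta^{(H)}_X$ rewrite $f_{H(X)} H(\eta^{(H)}_X)$ as $H(\eta^{(H)}_X) f_X$, whereupon the unit axiom $\mu^{(H)}_X H(\eta^{(H)}_X) = \id_{H(X)}$ collapses everything to $f_X$. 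Hence $(f^{\sharp})^{\flat} = f$, which already yields injectivity.

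The main obstacle is the remaining identity $(\Phi^{\flat})^{\sharp} = \Phi$, that is $\vartheta_M\,\Phi^{\flat}_M = \Phi_{(M, \vartheta_M)}$ for every module. The decisive observation is that the action $\vartheta_M$ is itself a morphism of $H$-modules $F_H(M) \to (M, \vartheta_M)$ --- it is precisely the counit $\epsilon_{(M, \vartheta_M)}$, the associativity axiom $\vartheta_M \mu^{(H)}_M = \vartheta_M H(\vartheta_M)$ being exactly its compatibility with actions. Feeding this morphism into the naturality of $\Phi$ gives $\vartheta_M\,\Phi_{F_H(M)} = \Phi_{(M, \vartheta_M)}\,H(\vartheta_M)$; substituting the definition of $\Phi^{\flat}_M$ and using $H(\vartheta_M) H(\eta^{(H)}_M) = H(\vartheta_M \eta^{(H)}_M) = \id_{H(M)}$ from the module unit axiom then produces the claim.

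I expect this last step to be the conceptual heart of the proof, and the only genuinely non-mechanical idea: reconstructing the value of an arbitrary $\Phi$ from its values on free modules by exploiting that every action is a counit component. Everything else is standard monad and module manipulation, and I would organise the write-up so that this observation is stated and used cleanly. Note also that the argument never invokes a bimonad structure or the rigidity of $\cat V$, which is precisely why the bijection holds for an arbitrary monad $H$.
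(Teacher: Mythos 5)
Your proof is correct and is exactly the standard argument: the paper does not prove this lemma itself but cites \cite[Lemma~1.3]{Bruguieres2007}, whose proof proceeds by the same device of inverting $(\blank)^{\sharp}$ via evaluation at free modules and precomposition with the unit, using that each action $\vartheta_M$ is the counit component $F_H(M)\to (M,\vartheta_M)$. All of your computations check out.
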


The next lemma is a variant of \cite[Lemma~7.5]{Bruguieres2007}.

\begin{lemma} \label{lem: s2-is-conjugate}
  Let \(\phi \from \Id_{\cat V} \to \lbiDual{(\blank)}\) be a pivotal structure on \(\cat V\) and \(H \from \cat V \to \cat V\) a Hopf monad.
  For any group-like \(g \in \Gr(H)\) and character \(\beta \in \Char(H)\) the following are equivalent:
  \begin{thmlist}
    \item The arrows \(g\) and \(\beta\) form a pair in involution of \(H\) and \(\phi\).
    \item The natural morphism \(\phi g^{\sharp}\in \Nat(U^H, U^H)\) lifts to \(\Nat(\Id_{\cat V^H}, \beta \otimes \lbiDual{(\blank)}\otimes \lDual \beta )\).
  \end{thmlist}
\end{lemma}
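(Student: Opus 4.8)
The plan is to translate the lifting condition in (ii) into $H$-linearity of one explicit morphism, reduce that condition to free modules via the mechanism behind the sharp-lift bijection of Lemma~\ref{lem: sharp-lift}, and then recognise the resulting equation in $\Nat(H,H)$ as precisely the pair-in-involution identity \eqref{eq: antipode_condition}, following \cite[Lemma~7.5]{Bruguieres2007}.

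First I would unfold what (ii) asserts. A lift of $\phi g^{\sharp}$ to $\Nat(\Id_{\cat V^H}, \beta \otimes \lbiDual{(\blank)} \otimes \lDual \beta)$ is a natural family of $H$-module morphisms $\rho_{(M,\vartheta_M)} \from (M,\vartheta_M) \to \beta \otimes \lbiDual{(M,\vartheta_M)} \otimes \lDual \beta$ whose underlying arrows in $\cat V$ are $(\phi g^{\sharp})_{(M,\vartheta_M)} = \phi_M \vartheta_M g_M$. The underlying object of the target is $\lbiDual M$, so no further data can be chosen: such a lift exists precisely when $\phi_M \vartheta_M g_M$ is $H$-linear from $(M,\vartheta_M)$ to $\beta \otimes \lbiDual{(M,\vartheta_M)} \otimes \lDual \beta$ for every module. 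I would then reduce this to free modules. Since the counit $\vartheta_M \from F_H(M) \to (M,\vartheta_M)$ is an epimorphism split in $\cat V$ by $\eta_M$, and $\phi g^{\sharp}$ is natural, $H$-linearity at an arbitrary module follows from $H$-linearity at the free modules $F_H(X) = (H(X), \mu^{(H)}_X)$; this is exactly the reduction underlying Lemma~\ref{lem: sharp-lift}. On $F_H(X)$ the underlying arrow becomes $\phi_{H(X)} \mu^{(H)}_X g_{H(X)} = \phi_{H(X)} L_{g,X}$, using the left regular action of \eqref{eq: left_and_right_regular_action}, so I am left with a single equation between natural transformations indexed by $X \in \cat V$.

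The heart of the argument is then to unwind the action on the target. Using Equation~\eqref{eq: tensor_product_of_modules_over_bimonad} for the $H$-action on a tensor product of modules, together with the rigid structure of $\cat V^H$ furnished by the left and right antipodes of Remark~\ref{rem: left_and_right_antipode}, I would compute the action of $H$ on $\beta \otimes \lbiDual{F_H(X)} \otimes \lDual \beta$ in Willerton's graphical calculus. The $\beta$- and $\lDual \beta$-strands assemble, through $\vartheta_\beta$, $\vartheta_{\lDual \beta}$ and the iterated comultiplication $\Delta^{(H)}_{1,\blank,1}$, into the adjoint action $\Ad_\beta$ of Definition~\ref{def: adjoint_action}, while the middle strand, carrying the action on the bidual module, combines with the pivotal structure $\phi$ and the left antipode $s^l$ to reproduce the square of the antipode $S^2$ of Definition~\ref{def: s-squared}. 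The $g$-contributions — one from the source action, one from $H(\phi_M \vartheta_M g_M)$ on the target — yield the left and right regular actions $L_g$ and $R_{g^{-1}}$, that is $\Ad_g$. Matching the two sides of the $H$-linearity equation on free modules thus gives exactly $\Ad_{g,X} = \Ad_{\beta,X} S^2_X$, i.e. \eqref{eq: antipode_condition}; since every step is an equivalence, reading it in both directions yields (i) $\Leftrightarrow$ (ii).

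The hard part will be the bookkeeping in this last step: correctly identifying the action on $\lbiDual{(M,\vartheta_M)}$ inside $\cat V^H$ and folding $s^l$ together with $\phi$ into $S^2$, while keeping the evaluation and coevaluation placements and the left-versus-right antipode conventions consistent throughout the diagram. This is a delicate string-diagram manipulation, but it is structurally parallel to the Hopf-algebraic computation behind \cite[Lemma~7.5]{Bruguieres2007} and to the very shape of Definition~\ref{def: s-squared}, so the identification of the three building blocks $\Ad_g$, $\Ad_\beta$ and $S^2$ should proceed without surprises once the action on the target module is written out.
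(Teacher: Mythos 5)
Your proposal is correct and follows essentially the same route as the paper: unfold the lift as $H$-linearity of $\phi_M\vartheta_M g_M$, compute the action on $\beta\otimes\lbiDual{M}\otimes\lDual\beta$ so that the bidual action folds with $\phi$ and $s^l$ into $S^2$ and the character strands into $\Ad_\beta$, and use the $(\blank)^{\sharp}$ bijection to strip the module action and land on $\Ad_g=\Ad_\beta S^2$. The only (immaterial) difference is organizational: you reduce to free modules up front, whereas the paper carries $\vartheta_M$ through the whole computation and invokes the injectivity of $(\blank)^{\sharp}$ only in the final step.
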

\begin{proof}
  Consider a module \((M, \vartheta_M) \in \cat V^H\).
  By \cite[Theorem~3.8(a)]{Bruguieres2007} and the definition of \(S^2\), the action on \(\lbiDual M\) is given by
  \begin{equation*}
    \vartheta_{\lbiDual M}
    = \lbiDual{\vartheta_M} s^l_{\lbiDual{H(M)}} H(\lDual{s^l_M})
    = \phi_M \vartheta_M S^2_M H(\phi^{-1}_M)
  \end{equation*}
  and therefore we have
  \begin{equation*}
    \vartheta_{\beta \otimes \lbiDual M \otimes \lDual \beta}
    = (\vartheta_{\beta} \otimes \vartheta_{\lbiDual{M}} \otimes \vartheta_{\lDual \beta}) \Delta^{(H)}_{1,M,1}
    = (\vartheta_{\beta} \otimes \phi_M \vartheta_M S^2_M H(\phi^{-1}_M) \otimes \vartheta_{\lDual \beta}) \Delta^{(H)}_{1,M,1}.
  \end{equation*}
  By definition \(\phi g^{\sharp}\) lifts to a natural transformation from \(\Id_{\cat V^H}\) to \(\beta \otimes \lbiDual{(\blank)} \otimes \lDual \beta\),  if and only if for any \(H\)-module \((M, \vartheta_M)\), we have
  \begin{equation} \label{eq: sharpbeta-lifts}
    (\phi g^{\sharp})_M \vartheta_M = \vartheta_{\beta \otimes \lbiDual{M} \otimes \lDual \beta}H( (\phi g^{\sharp})_M).
  \end{equation}
  Let us now successively simplify both sides of the equation.
  Using the naturality of \(g \from \Id_{\cat V} \to H\), the fact that \(\vartheta_M\) is an action and the definition of \(g^{\sharp}\) as given in Lemma~\ref{lem: sharp-lift}, we can rewrite the left hand side of the equation as
  \begin{equation*}
    (\phi g^{\sharp})_M \vartheta_M
    = \phi_M \vartheta_M g_M \vartheta_M
    = \phi_M \vartheta_M H(\vartheta_M) g_{H(M)}
    = \phi_M \vartheta_M \mu^{(H)}_M g_{H(M)}.
  \end{equation*}
  Similarly, we simplify the right-hand side to
  \begin{align*}
    &\vartheta_{\beta \otimes \lbiDual{M} \otimes \lDual \beta}H( (\phi g^{\sharp})_M)
    = (\vartheta_{\beta} \otimes \phi_M \vartheta_M S^2_M H(\phi^{-1}_M) \otimes \vartheta_{\lDual \beta}) \Delta^{(H)}_{1,M,1} H( (\phi g^{\sharp})_M) \\
    & = (\vartheta_{\beta} \otimes \phi_M \vartheta_M  S^2_M H(\phi^{-1}_M) H( (\phi g^{\sharp})_M) \otimes \vartheta_{\lDual \beta}) \Delta^{(H)}_{1,M,1} \\
    & = (\vartheta_{\beta} \otimes \phi_M \vartheta_M  S^2_M H(\vartheta_M g_M ) \otimes \vartheta_{\lDual \beta}) \Delta^{(H)}_{1,M,1}
     = (\vartheta_{\beta} \otimes \phi_M \vartheta_M   H(\vartheta_M g_M ) S^2_M \otimes \vartheta_{\lDual \beta}) \Delta^{(H)}_{1,M,1} \\
    & = \phi_M \vartheta_M   H(\vartheta_M g_M ) (\vartheta_{\beta} \otimes  \id_{H(M)}\otimes \vartheta_{\lDual \beta}) \Delta^{(H)}_{1,M,1}S^2_M
     = \phi_M \vartheta_{M}\mu^{(H)}_M H(g_M) \Ad_{\beta,M} S^2_M .
  \end{align*}
  Using the fact that \(\phi\) is an isomorphism, Equation~\eqref{eq: sharpbeta-lifts} can thus be restated as
  \begin{align*}
    & & \vartheta_M \mu^{(H)}_M g_{H(M)} &= \vartheta_M\mu^{(H)}_M H(g_M)  \Ad_{\beta,M} S^2_M \\
    \iff & & \vartheta_M L_{g, M} &= \vartheta_M R_{g, M} \Ad_{\beta, M} S^2_M
  \end{align*}
  Since \((\blank)^{\sharp}\) is a bijection,
  the above equation is equivalent to
  \(L_{g, M} = R_{g, M} \Ad_{\beta, M} S^2_M\)
  by Lemma~\ref{lem: sharp-lift}.
  We conclude the proof by multiplying both sides with \(R_{g^{-1}, M}\).
\end{proof}

The previous lemma leads to an identification of pairs in involution of \(H\) and \(\phi\)
with certain quasi-pivotal structures on \(\cat V^H\).

\begin{proposition}\label{prop: piis_are_qpiv}
  Suppose \(H \from \cat V \to \cat V\) to be a Hopf monad
  and \(\phi \from \Id_{\cat V} \to \lbiDual{(\blank)}\) a pivotal structure on \(\cat V\).
  Then $(H, \phi)$ admits a pair in involution if and only if there exists a quasi-pivotal structure on $\cat V^{H}$ whose underlying invertible object is a character $\beta \in \Char(H)$.
\end{proposition}
\begin{proof}
  We proceed analogous to \cite[Proposition~7.6]{Bruguieres2007}.
  Suppose \((g, \beta) \in \mathrm{PI}_{H}^{\phi}\).
  By the previous lemma, \(\phi g^{\sharp}\) lifts to a natural isomorphism
  \begin{equation*}
    \rho_{\beta,X}\from X \to \beta \otimes \lbiDual X \otimes \lDual \beta, \qquad \text{ for all } X \in \cat V^H.
  \end{equation*}
  Since \(\phi\) is monoidal by definition and \(g^{\sharp}\) is monoidal by virtue of \(g\) being a group-like, see \cite[Lemma~3.20]{Bruguieres2007}, we obtain a quasi-pivotal structure
  \(\rho_{\beta} \from \Id_{\cat V^H} \to \beta \otimes \lbiDual{(\blank)}\otimes \lDual \beta\).

  On the other hand, let \((\beta,\rho_{\beta})\) be a quasi-pivotal structure, where \(\beta \in \Char(H)\)  is a character.
  Since the forgetful functor \(U^H\) is strong monoidal and thus
  \begin{equation*}
    U^H(\beta\otimes \lbiDual{(\blank)}\otimes \lDual \beta) = U^H(\lbiDual{(\blank)})= \lbiDual{(U^H(\blank))},
  \end{equation*}
  there exists a monoidal natural transformation
  \begin{equation*}
    \phi^{-1}_{U^H(X)}U^H(\rho_{\beta,X}) \from U^H(X) \to U^H(X), \qquad \qquad \text{for all } X \in \cat V^H.
  \end{equation*}
  Again, we apply \cite[Lemma~3.20]{Bruguieres2007} and obtain a unique group-like \(g\in \Gr(H)\) such that \(g^{\sharp} = \phi^{-1}_{U^H(X)}U^H(\rho_{\beta,X})\).
  As \(\phi g^{\sharp} = U^H(\rho_{\beta})\) lifts to the quasi-pivotal structure \((\beta,\rho_{\beta})\) on \(\cat V^H\), Lemma~\ref{lem: s2-is-conjugate} implies that \((g, \beta) \in \mathrm{PI}_{H}^{\phi}\).
\end{proof}

Let us now study a variant of \cite[Lemma~2.9]{Bruguieres2007}.

\begin{proposition}\label{prop: functors_as_morphisms_of_monads}
  Assume \(K, C \from \cat M \to \cat M\)  to be two comodule monads over a bimonad \(B \from \cat C \to \cat C\).
  There is a bijective correspondence between morphisms of comodule monads \(f \from K \to C\) and strict module functors \(F \from \cat M^C \to \cat M^K\) such that \(U^K F = U^C\).
\end{proposition}
\begin{proof}
  As shown for example in \cite[Lemma~1.7]{Bruguieres2007}, any functor  \(F \from \cat M^C \to \cat M^K\) with  \(U^K F = U^C\) is `induced' by a unique  morphism of monads \(f \from K \to C\).
  That is, \(F\) is the identity on morphisms and on objects it is defined by
  \begin{equation*}
    F(M, \vartheta_M) = (M, \vartheta_M f_M), \qquad \qquad \text{ for all } (M, \vartheta_M) \in \cat M^C.
  \end{equation*}
  It remains to show that \(f\) is a morphism of comodules if and only if \(F\) is a strict module functor in the sense of Definition~\ref{def: functor_mod_cat}.
  Let \((M, \vartheta_M)\in \cat M^C\) and \((X, \vartheta_X) \in \cat C^B\).
  We compute
  \begin{align*}
    F((M, \vartheta_M) \ract (X, \vartheta_X))
    &= (M \ract X, (\vartheta_M \ract \vartheta_X)\delta^{(C)}_{M,X} f_{M\ract X}), \\
    F(M, \vartheta_M) \ract  (X, \vartheta_X)
    &= (M \ract X, (\vartheta_M \ract \vartheta_X) (f_M \ract \id_{B(X)}) \delta^{(K)}_{M, X}).
  \end{align*}
  According to \cite[Lemma~1.4]{Bruguieres2007}, these modules coincide if and only if
  \begin{equation*}
    \delta^{(C)}_{M,X} f_{M \ract X} = (f_M \ract \id_{B(X)}) \delta^{(K)}_{M, X},
  \end{equation*}
  which is exactly the condition for \(f\) to be a comodule morphism.
\end{proof}

The above result readily implies the desired monadic version of Theorem~\ref{thm: equiv_in_Hopf_setting}.

\begin{theorem}\label{thm: equivalence_in_the_monad_setting}
  Let \(\cat{V}\) be a rigid monoidal category,
  and suppose that \(H \from \cat V \to \cat V\) is a Hopf monad
  that admits a double \(D(H)\) and anti-double \(Q(H)\).
  The following statements are equivalent:
  \begin{thmlist}
    \item The monoidal unit \(1 \in \cat V\) lifts to a module over \(Q(H)\).
    \item The Drinfeld double and anti-Drinfeld double of \(H\) are isomorphic as comodule monads.
    \item There is an isomorphism of monads \(g \from Q(H) \to D(H)\).
  \end{thmlist}
  Additionally, if \(\cat V\) is pivotal with pivotal structure \(\phi\),
  any of the above statements hold if and only if \(H\) and \(\phi\) admit a pair in involution.
\end{theorem}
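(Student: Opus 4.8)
The plan is to transport everything to the rigid monoidal category $\cat C \defeq \cat V^H$ and combine the categorical results of Section~\ref{sec: pivotality_of_the_df_centre} with the monadic dictionary built up in this section. By Theorem~\ref{thm: twisted_doubles_vs_twisted_centres} and Definition~\ref{def: double_of_bimonad}, the comparison functors identify $\cat V^{D(H)}$ with the Drinfeld centre $\ZCat*{\cat V^H}$ and $\cat V^{Q(H)}$ with the twisted centre $\QCat*{\cat V^H}$, which by Remark~\ref{rem: right_vs_left_twisted_centres} we may read as the anti-Drinfeld centre $\ACat*{\cat V^H}$; moreover, by Theorem~\ref{thm: inverse_comparison_functor}, these identifications are isomorphisms of categories commuting with the forgetful functors down to $\cat V$. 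Under this dictionary, a $Q(H)$-module whose underlying object is $1 \in \cat V$ is precisely an object $(\alpha, \sigma_{\alpha, \blank}) \in \ACat*{\cat V^H}$ whose underlying object in $\cat V^H$ is a character $\beta \in \Char(H)$ in the sense of Definition~\ref{def: character_and_group-like}, that is, an element of $\Pic \ACat*{\cat V^H}$ lying over $1 \in \cat V$.

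First I would prove the cycle (ii) $\Rightarrow$ (iii) $\Rightarrow$ (i) $\Rightarrow$ (ii). The implication (ii) $\Rightarrow$ (iii) is immediate, since an isomorphism of comodule monads is in particular an isomorphism of monads. For (iii) $\Rightarrow$ (i), an isomorphism of monads $g \from Q(H) \to D(H)$ induces an isomorphism of categories $\cat V^{D(H)} \to \cat V^{Q(H)}$ over $\cat V$; applying it to the monoidal unit of $\cat V^{D(H)}$, whose underlying object is $1 \in \cat V$, yields a $Q(H)$-module with underlying object $1$. For (i) $\Rightarrow$ (ii), I would take the character $\beta$ furnished by (i), note that it is invertible in $\cat V^H$ (its underlying object $1$ is invertible and $U_H$ is conservative and strong monoidal), and invoke Theorem~\ref{thm: module_funs_have_adjoints} to conclude that $\blank \otimes \beta \from \ZCat*{\cat V^H} \to \ACat*{\cat V^H}$ is a $\ZCat*{\cat V^H}$-module equivalence. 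Because $\beta$ lies over $1 \in \cat V$, this functor is strict over the forgetful functors to $\cat V$, and Theorem~\ref{thm: functors_as_morphisms_of_monads} turns it into a morphism of comodule monads $f \from Q(H) \to D(H)$.

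The crux, and the step I expect to be the main obstacle, is to verify that $f$ is an \emph{isomorphism} of comodule monads and not merely a morphism inducing an equivalence; through Theorem~\ref{thm: functors_as_morphisms_of_monads} this amounts to showing that $\blank \otimes \beta$ is an isomorphism of categories rather than only an equivalence. The key observation is that, since $\beta$ is a character, its left dual $\lDual \beta$ is again a character over $1 \in \cat V$, and the rigidity of $\cat V^H$ (equivalently, the antipode of $H$) forces the convolution of the two actions to satisfy $\vartheta_\beta \ast \vartheta_{\lDual \beta} = \varepsilon^{(H)}$. Together with counitality of the comultiplication this yields that $\blank \otimes \lDual \beta$ is a strict two-sided inverse of $\blank \otimes \beta$ on objects and morphisms, so the two are mutually inverse isomorphisms of categories over $\cat V$. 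Transporting back through the comparison isomorphisms and Theorem~\ref{thm: functors_as_morphisms_of_monads} then shows that $f$ is invertible, which gives (ii). I would isolate the convolution identity $\vartheta_\beta \ast \vartheta_{\lDual \beta} = \varepsilon^{(H)}$ as a short separate computation, since it is exactly the point at which the Hopf (rather than merely bimonadic) structure is used.

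Finally, for the pivotal addendum, assume $\cat V$ is pivotal. By Theorem~\ref{thm: piis_are_qpiv}, $H$ admits a pair in involution if and only if $\cat V^H$ carries a quasi-pivotal structure $(\beta, \rho_\beta)$ whose invertible object $\beta$ is a character. By Lemma~\ref{lem: quasi_piv_vs_inv_ayds}, such quasi-pivotal structures correspond bijectively to elements of $\Pic \ACat*{\cat V^H}$, and the character condition on $\beta$ is precisely the requirement that the associated invertible object of $\ACat*{\cat V^H}$ lie over $1 \in \cat V$, which is statement (i). Hence, under the standing identifications, the existence of a pair in involution is equivalent to (i), completing the proof.
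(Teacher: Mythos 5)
Your proposal is correct and follows essentially the same route as the paper: the same cycle $(ii)\Rightarrow(iii)\Rightarrow(i)\Rightarrow(ii)$, the same use of Theorem~\ref{thm: functors_as_morphisms_of_monads} and the inverse comparison functor $E^{(Q(H))}$, and the same reduction of the pivotal addendum to Lemma~\ref{lem: quasi_piv_vs_inv_ayds} and Theorem~\ref{thm: piis_are_qpiv}. The one place you go beyond the paper is in $(i)\Rightarrow(ii)$, where the paper simply asserts that Theorem~\ref{thm: functors_as_morphisms_of_monads} yields an \emph{isomorphism} of comodule monads, while you correctly isolate the needed extra step—that $\blank\otimes\lDual\beta$ is a strict inverse over $\cat V$, which rests on the convolution identity $(\vartheta_\beta\otimes\vartheta_{\lDual\beta})\Delta_{1,1}=\varepsilon^{(H)}$ coming from the (co)evaluation of $\beta$ being a module morphism—so your write-up fills a detail the paper leaves implicit.
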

\begin{proof}
  \((i) \!\! \implies \!\! (ii)\): suppose \(\omega \in \QCat*{\cat V^H}\) with \(U^{Q(H)}(\omega) = 1\).
  As shown in Equation~\eqref{eq: objs_as_mod_funs}, it induces a functor of module categories
  \begin{equation*}
    \omega \otimes \blank  \from \cat V^{D(H)} \to \cat V^{Q(H)}.
  \end{equation*}
  Since \(U^{Q(H)}(\omega)= 1\in \cat V\), we can apply Proposition~\ref{prop: functors_as_morphisms_of_monads} and obtain that \(Q(H)\) and \(D(H)\) are isomorphic as comodule monads.

  It immediately follows that \((ii)\) implies \((iii)\); we proceed with \((iii) \!\! \implies \!\! (i)\): consider an isomorphism of monads \(g \from Q(H) \to D(H)\).
  It gives rise to a functor \(G \from \cat V^{D(H)} \to \cat V^{Q(H)}\) that, on objects, is defined by
  \begin{equation*}
    G(M, \vartheta_M) = (M, \vartheta_M g_M), \qquad \qquad \text{ for all } (M, \vartheta_M) \in \cat V^{D(H)}.
  \end{equation*}
  We compose \(G\) with the inverse of the comparison functor \(E^{(Q(H))} \from \cat V^{Q(H)} \to \QCat*{\cat{V}^H}\), defined in Equation~\eqref{eq: inverse_comparison_functor}, and see that there exists an object
  \begin{equation*}
    1^{(Q)} \defeq E^{(Q(H))}G(1) \in \QCat*{\cat{V}^H}
  \end{equation*}
  whose underlying object is the unit of \(\cat V\).

  Now let \((\cat{V}, \phi)\) be pivotal.
  By Lemma~\ref{lem: quasi_piv_vs_inv_ayds}, lifts of \(1\in \cat V\) to the dual of the anti-center \(\QCat*{\cat V^H}\) are in correspondence with quasi-pivotal structures
  \((\beta, \rho_{\beta})\), where \(\beta \in \Char(H)\).
  By Proposition~\ref{prop: piis_are_qpiv} such a quasi-pivotal structure exists if and only if
  there exists a pair in involution for \(H\) and \(\phi\).
\end{proof}

As a corollary, we can determine whether a category is pivotal in terms of monad isomorphisms between the central and anti-central monad.
For a category \(\cat{C}\),
recall Definition~\ref{def: central_bimonad} of its central Hopf monad,
and Definition~\ref{def: anti_central_comodule_monad} of its anti-central comodule monad.

\begin{corollary}\label{cor: pivotal_from_central_anti_central}
  Let \(\cat{C}\) be a rigid monoidal category.
  If \(\cat{C}\) admits a central Hopf monad \(\mathfrak{D}(\cat{C})\)
  and an anti-central comodule monad \(\mathfrak{Q}(\cat{C})\),
  then it is pivotal if and only if \(\mathfrak{D}(\cat{C}) \cong \mathfrak{Q}(\cat{C})\) as monads.
\end{corollary}
\begin{proof}
  We consider the identity \(\Id_{\cat C} \from \cat C \to \cat C\) as a Hopf monad.
  Its Drinfeld  and anti-Drinfeld double are \(D(\Id_{\cat C}) = \mathfrak D(\cat C) \rtimes \Id_{\cat C}\) and \(Q(\Id_{\cat C}) = \mathfrak Q(\cat C) \rtimes \Id_{\cat C}\).
  From here it follows that \(D(\Id_{\cat C}) = \mathfrak D(\cat C)\) and similarly \(Q(\Id_{\cat C}) = \mathfrak Q(\cat C)\).
  The proof is concluded by Theorem~\ref{thm: equivalence_in_the_monad_setting}.
\end{proof}


\appendix
\section{Additional proofs}
\setcounter{section}{1}

\subsection{Proof of Lemma~\ref{lem: pivotal_structure_from_entwinement}} \label{sec:proof:pivotal_structure_from_entwinement}

\begin{proof}
  Fix an object \(A=(\alpha, \sigma_{\alpha, \blank}) \in \ACat C\) such that \(\alpha\) is invertible in \(\cat C\), and write \(\Omega = (\omega, \sigma_{\omega, \blank}) \in \QCat C\) for its left dual.
  Furthermore, we assume \(X \in \ZCat C\) to be any object in the Drinfeld centre of \(\cat C\).
  We note that for any \(Y \in \cat C\) a variant of the \YangBaxter identity holds:
  \begin{equation}\label{eq: auxiliary_YB_identity}
  \tikzsetnextfilename{auxiliary_YB_identity}%
  \input{sections/tikzfigures/auxiliary_YB_identity.tikz}%

  \end{equation}

  The above identity combined with those  displayed in Diagram~\eqref{eq: unit_braiding_coev} proves that \(\rho_{A,X} \from X \to \lbiDual X\) is a morphism in the Drinfeld centre of \(\cat C\):
  \begin{equation}
  \tikzsetnextfilename{pivotality_morphism_well_defined}%
  \input{sections/tikzfigures/pivotality_morphism_well_defined.tikz}%

  \end{equation}
  Since the forgetful functor \(U^{(Z)} \from \ZCat C \to \cat C\) is conservative and \(\rho_{A, X}\) is a composite of isomorphisms in \(\cat C\), it is an isomorphism in the centre \(\ZCat C\).

  The naturality of the half-braidings implies that \(\rho_{A}\) is natural as well.
  \begin{center}
  \tikzsetnextfilename{pivotality_morphisms_naturality}%
  \input{sections/tikzfigures/pivotality_morphisms_naturality.tikz}%

  \end{center}
  Lastly, the natural isomorphism \(\rho_A \from \Id_{\ZCat C} \to \lbiDual{(\blank)}\) being monoidal is established by the hexagon identities, as is made evident by the next diagram.
  \begin{center}
  \tikzsetnextfilename{pivotality_morphisms_monoidal}%
  \input{sections/tikzfigures/pivotality_morphisms_monoidal.tikz}%

  \end{center}
\end{proof}

\subsection{Proof of Lemma~\ref{lem: right_action_through_centralisers}} \label{sec:proof:right_action_through_centralisers}

\begin{proof}
  We proceed as in \cite[Proposition~5.9]{Bruguieres2012} and fix objects \((M, \sigma_{M, \blank} ) \in \ZCat[L]{C}\) and \((X, \sigma_{X,\blank}) \in \ZCat[B]{C}\).
  The compatibility of the half-braiding of \(M \otimes X\) with the unit of \(\cat C\) is a short computation:
  \begin{equation*}
  \tikzsetnextfilename{unit_identity_from_bimonad}%
  \input{sections/tikzfigures/unit_identity_from_bimonad.tikz}%

  \end{equation*}

  Similarly, we verify the hexagon axiom:
  \begin{equation*}
  \tikzsetnextfilename{hexagon_identity_from_bimonad}%
  \input{sections/tikzfigures/hexagon_identity_from_bimonad.tikz}%

  \end{equation*}

  The compatibility of the action \(\alpha \from LB \to L\) with the multiplication and unit of \(B\) asserts that \(\ZCat[L]{C}\) is a right module of the generalised Drinfeld centre.

  By construction, we have for all \((M, \sigma_{M, \blank} ) \in \ZCat[L]{C}\) and \((X, \sigma_{X,\blank}) \in \ZCat[B]{C}\)
  \begin{equation*}
    U^{(L)}((M, \sigma_{M,\blank}) \ract (X, \sigma_{X,\blank}))
    = M \otimes X
    = U^{(L)}(M, \sigma_{M,\blank}) \otimes U^{(B)} (X, \sigma_{X,\blank}).
  \end{equation*}
  Thus, \(U^{(L)}\) is a strict comodule functor over \(U^{(B)}\).
\end{proof}

\subsection{Proof of Proposition~\ref{prop: construction_anti_central_comonad}} \label{sec:proof:construction_anti_central_comonad}

\begin{proof}
  By Remark~\ref{rem: adjoint_from_inverse_of_the_comparison_functor} and Proposition~\ref{prop: inverse_comparison_functor} we have monadic adjunctions
  \begin{equation*}
    \adj{F^{(B)}}{U^{(B)}}{\cat C}{\ZCat[B]{C}} \qquad \text{ and } \qquad \adj{F^{(L)}}{U^{(L)}}{\cat C}{\ZCat[L]{C}}
  \end{equation*}
  which, due to \cite[Remark~5.13]{Bruguieres2012}, give rise to the bimonad \(Z\) and monad \(Q\), respectively.
  Lemma~\ref{lem: right_action_through_centralisers} shows that \(U^{(L)}\) is a strict comodule functor over \(U^{(B)}\).
  A straightforward generalisation of \cite[Remark~3.3]{Bruguieres2012} shows that \(Q\) is a comodule monad over \(B\);
  the coaction \(\lambda \from Q(\blank \otimes \blank) \to Q(\blank) \otimes Z(\blank)\) implementing the action of \(\cat{C}^{Z}\) on \(\cat{C}^{Q}\) is for all \(X, Y \in \cat C\) given by
  \begin{equation}
    \lambda_{X, Y} = \vartheta_{Q(X) \otimes Z(Y)} Q(\eta^{(Q)}_{X} \otimes \eta^{(Z)}_{Y}).
  \end{equation}
  For a proof of this fact, we refer the reader to \cite{halbig24}.

  By using the relation between universal coactions and half-braidings, explained in Equation~\eqref{eq: free_half_braiding}, and  applying the hexagon identity we compute:
  \begin{equation}
  \tikzsetnextfilename{uniqueness_of_coaction}%
  \input{sections/tikzfigures/uniqueness_of_coaction.tikz}%

  \end{equation}
  The uniqueness property of universal coactions implies \(\lambda= \delta^{(Q)}\).

  It remains to show that \(\cat C^{Q}\) and \(\ZCat[L]{C}\) are isomorphic as modules over \(\cat C^{Z}\).
  In \cite{halbig24} it is shown that the comparison functor
  \(\Sigma^{(Z)} \from \ZCat[B]{C} \to \cat C^{Z}\) is strong monoidal,
  and that \(\Sigma^{(Q)} \from \ZCat[L]{C} \to \cat C^{Q}\) is a strong comodule functor over it.
  Furthermore, due to Proposition~\ref{prop: inverse_comparison_functor}, both \(\Sigma^{(Z)}\) and \(\Sigma^{(Q)}\)  admit inverses
  \begin{equation*}
    E^{(Z)} \from \cat C^{Z} \to \ZCat[B]{C} \qquad \text{ and } \qquad
    E^{(Q)} \from \cat C^{Q} \to \ZCat[L]{C}.
  \end{equation*}
  Using that \(E^{(Z)}\) is monoidal as well, we identify the right action of \(\ZCat[B]{C}\) on \(\ZCat[L]{C}\) with a right action \(\bract\from\ZCat[L]{C} \times \cat C^Z \to \ZCat[L]{C} \) of \(\cat C^Z\) by setting
  \begin{equation*}
    \ZCat[L]{C} \times \cat C^Z \trightarrow{\Id \times E^{(Z)}}
    \ZCat[L]{C} \times \ZCat[B]{C} \trightarrow{(\blank)\ract(\blank)}
    \ZCat[L]{C}.
  \end{equation*}
  For any \(M \in \ZCat[L]{C}\) and \(X \in\ZCat[L]{C}\)  we have
  \begin{equation*}
    \Sigma^{(Q)} (M\bract X)
    =
    \Sigma^{(Q)} (M \ract E^{(Z)}(X))
    \trightarrow{\delta^{\Sigma^{(Q)}}} \Sigma^{(Q)} (M) \ract \Sigma^{(Z)}E^{(Z)}(X) = \Sigma^{(Q)} (M) \ract X
  \end{equation*}
  and therefore \(\Sigma^{(Q)} \from \ZCat[L]{C} \to \cat C^{Q}\) is an isomorphism of module categories.
\end{proof}

\bibliographystyle{alpha}
\footnotesize\bibliography{main}\normalsize

\newcommand{\etalchar}[1]{$^{#1}$}
 \newcommand{\noop}[1]{}
\begin{thebibliography}{AAGI{\etalchar{+}}14}

\bibitem[AAGI{\etalchar{+}}14]{Andruskiewitsch2014}
Nicolas Andruskiewitsch, Ivan Angiono, Augustin Garc\'{\i}a~Iglesias, Blas
  Torrecillas, and Cristian Vay.
\newblock From {H}opf algebras to tensor categories.
\newblock In {\em Conformal field theories and tensor categories}, Math. Lect.
  Peking Univ., pages 1--31. Springer, Heidelberg, 2014.

\bibitem[AC12]{Aguiar2012}
Marcelo Aguiar and Stephen~U. Chase.
\newblock Generalized {H}opf modules for bimonads.
\newblock {\em Theory and Applications of Categories}, 27:263--326, 2012.

\bibitem[AM10]{Aguiar2010}
Marcelo Aguiar and Swapneel Mahajan.
\newblock {\em Monoidal functors, species and {H}opf algebras}, volume~29 of
  {\em CRM Monograph Series}.
\newblock American Mathematical Society, Providence, RI, 2010.
\newblock With forewords by Kenneth Brown and Stephen Chase and Andr\'{e}
  Joyal.

\bibitem[Bec69]{Beck1969}
Jon Beck.
\newblock Distributive laws.
\newblock In {\em Sem. on {T}riples and {C}ategorical {H}omology {T}heory
  ({ETH}, {Z}\"{u}rich, 1966/67)}, pages 119--140. Springer, Berlin, 1969.

\bibitem[B{\'{e}}n63]{Benabou1963}
Jean B{\'{e}}nabou.
\newblock Cat\'{e}gories avec multiplication.
\newblock {\em Comptes Rendus Hebdomadaires des S\'{e}ances de l'Acad\'{e}mie
  des Sciences}, 256:1887--1890, 1963.

\bibitem[BLV11]{Bruguieres2011}
Alain Brugui\`eres, Steve Lack, and Alexis Virelizier.
\newblock Hopf monads on monoidal categories.
\newblock {\em Advances in Mathematics}, 227(2):745--800, 2011.

\bibitem[Boa95]{Boardman1995}
J.~Michael Boardman.
\newblock Stable operations in generalized cohomology.
\newblock In {\em Handbook of algebraic topology}, pages 585--686.
  North-Holland, Amsterdam, 1995.

\bibitem[Brz20]{Brzezinski2020}
Tomasz Brzezi\'{n}ski.
\newblock Trusses: paragons, ideals and modules.
\newblock {\em Journal of Pure and Applied Algebra}, 224(6):106258, 39, 2020.

\bibitem[BS11]{Baez2011}
John Baez and Mike Stay.
\newblock Physics, topology, logic and computation: a {R}osetta {S}tone.
\newblock In {\em New structures for physics}, volume 813 of {\em Lecture Notes
  in Phys.}, pages 95--172. Springer, Heidelberg, 2011.

\bibitem[BV07]{Bruguieres2007}
Alain Brugui\`eres and Alexis Virelizier.
\newblock Hopf monads.
\newblock {\em Advances in Mathematics}, 215(2):679--733, 2007.

\bibitem[BV12]{Bruguieres2012}
Alain Brugui\`eres and Alexis Virelizier.
\newblock Quantum double of {H}opf monads and categorical centers.
\newblock {\em Transactions of the American Mathematical Society},
  364(3):1225--1279, 2012.

\bibitem[BW99]{Barrett1999}
John~W. Barrett and Bruce~W. Westbury.
\newblock Spherical categories.
\newblock {\em Advances in Mathematics}, 143(2):357--375, 1999.

\bibitem[Cas05]{castelnuovo1905}
Guido Castelnuovo.
\newblock Sur les intègrales de différentielles totales appartenant à une
  surface irrégulaière.
\newblock {\em Comptes rendus hebdomadaires des séances de l'Acadeémie des
  sciences, Paris.}, 140:220--222, 1905.

\bibitem[Cer43]{Certaine1943}
Jeremiah Certaine.
\newblock The ternary operation {$(abc)=ab^{-1}c$} of a group.
\newblock {\em Bulletin of the American Mathematical Society}, 49:869--877,
  1943.

\bibitem[CM99]{Connes1999}
Alain Connes and Henri Moscovici.
\newblock Cyclic cohomology and {H}opf algebras.
\newblock {\em Letters in Mathematical Physics}, 48(1):97--108, 1999.
\newblock Mosh\'{e} Flato (1937--1998).

\bibitem[CM00]{Connes2000}
Alain Connes and Henri Moscovici.
\newblock {C}yclic cohomology and {H}opf algebra symmetry.
\newblock {\em Letters in Mathematical Physics}, 52(1):1--28, 2000.
\newblock Conference Mosh\'{e} Flato 1999 (Dijon).

\bibitem[DP80]{Dold1980}
Albrecht Dold and Dieter Puppe.
\newblock Duality, trace, and transfer.
\newblock In {\em Proceedings of the {I}nternational {C}onference on
  {G}eometric {T}opology ({W}arsaw, 1978)}, pages 81--102. PWN, Warsaw, 1980.

\bibitem[DS07]{Day2007}
Brian Day and Ross Street.
\newblock Centres of monoidal categories of functors.
\newblock In {\em Categories in algebra, geometry and mathematical physics},
  volume 431 of {\em Contemp. Math.}, pages 187--202. Amer. Math. Soc.,
  Providence, RI, 2007.

\bibitem[DSS21]{Douglas2021}
Christopher~L. {Douglas}, Christopher {Schommer-Pries}, and Noah {Snyder}.
\newblock {\em Dualizable tensor categories}, volume 1308.
\newblock Providence, RI: American Mathematical Society (AMS), 2021.

\bibitem[EGNO15]{Etingof2015}
Pavel Etingof, Shlomo Gelaki, Dmitri Nikshych, and Victor Ostrik.
\newblock {\em Tensor categories}, volume 205 of {\em Mathematical Surveys and
  Monographs}.
\newblock American Mathematical Society, Providence, RI, 2015.

\bibitem[EK66]{Eilenberg1966}
Samuel Eilenberg and G.~Max Kelly.
\newblock Closed categories.
\newblock In {\em Proc. {C}onf. {C}ategorical {A}lgebra ({L}a {J}olla,
  {C}alif., 1965)}, pages 421--562. Springer, New York, 1966.

\bibitem[ENO04]{Etingof2004}
Pavel {Etingof}, Dmitri {Nikshych}, and Viktor {Ostrik}.
\newblock An analogue of radford's \(s^4\) formula for finite tensor
  categories.
\newblock {\em IMRN. International Mathematics Research Notices},
  2004(54):2915--2933, 2004.

\bibitem[ESZ21]{Elhamdadi2019}
Mohamed Elhamdadi, Masahico Saito, and Emanuele Zappala.
\newblock Heap and ternary self-distributive cohomology.
\newblock {\em Communications in Algebra}, 49(6):2378--2401, 2021.

\bibitem[FH23]{femić23:categ-yetter}
Bojana Femi{\'c} and Sebastian Halbig.
\newblock {C}ategorical centers and {Y}etter--{D}rinfel`d-modules as
  2-categorical (bi)lax structures.
\newblock {\em arXiv e-prints}, 2023.

\bibitem[FSS17]{Fuchs2017}
J\"urgen {Fuchs}, Gregor {Schaumann}, and Christoph {Schweigert}.
\newblock A trace for bimodule categories.
\newblock {\em Applied Categorical Structures}, 25(2):227--268, 2017.

\bibitem[GNN09]{Gelaki2009}
Shlomo Gelaki, Deepak Naidu, and Dmitri Nikshych.
\newblock Centers of graded fusion categories.
\newblock {\em Algebra \& Number Theory}, 3(8):959--990, 2009.

\bibitem[Gre10]{Greenough2010}
Justin Greenough.
\newblock Monoidal 2-structure of bimodule categories.
\newblock {\em Journal of Algebra}, 324(8):1818--1859, 2010.

\bibitem[Hal21]{Halbig2019}
Sebastian Halbig.
\newblock {G}eneralized {T}aft algebras and pairs in involution.
\newblock {\em Communications in Algebra}, 0(0):1--15, 2021.

\bibitem[HK19]{Halbig2018}
Sebastian Halbig and Ulrich Kr\"{a}hmer.
\newblock A {H}opf algebra without a modular pair in involution.
\newblock In {\em Geometric methods in physics {XXXVII}}, Trends Math., pages
  140--142. Birkh\"{a}user/Springer, Cham, 2019.

\bibitem[HKRS04]{Hajac2004a}
Piotr~M. Hajac, Masoud Khalkhali, Bahram Rangipour, and Yorck Sommerh\"{a}user.
\newblock Stable anti-{Y}etter--{D}rinfeld modules.
\newblock {\em Comptes Rendus Math\'{e}matique. Acad\'{e}mie des Sciences.
  Paris}, 338(8):587--590, 2004.

\bibitem[HKS19]{Hassanzadeh2019}
Mohammad Hassanzadeh, Masoud Khalkhali, and Ilya Shapiro.
\newblock Monoidal categories, 2-traces, and cyclic cohomology.
\newblock {\em Canadian Mathematical Bulletin. Bulletin Canadien de
  Math\'{e}matiques}, 62(2):293--312, 2019.

\bibitem[HL17]{Hollings2017}
Christopher~D. Hollings and Mark~V. Lawson.
\newblock {\em Wagner's theory of generalised heaps}.
\newblock Springer, Cham, 2017.

\bibitem[HPT16]{Henriques2016}
Andr\'{e} Henriques, David Penneys, and James Tener.
\newblock Categorified trace for module tensor categories over braided tensor
  categories.
\newblock {\em Documenta Mathematica}, 21:1089--1149, 2016.

\bibitem[HZ24]{halbig24}
Sebastian Halbig and Tony Zorman.
\newblock Diagrammatics for comodule monads.
\newblock {\em Forthcoming}, 2024.

\bibitem[JS91a]{joyal91}
Andr{\'e} Joyal and Ross Street.
\newblock The geometry of tensor calculus. {I}.
\newblock {\em Adv. Math.}, 88(1):55--112, 1991.

\bibitem[JS91b]{joyal91:yang-baxter}
Andr{\'e} Joyal and Ross Street.
\newblock Tortile yang-baxter operators in tensor categories.
\newblock {\em Journal of Pure and Applied Algebra}, 71(1):43--51, 1991.

\bibitem[JS93]{Joyal1993}
Andr\'{e} Joyal and Ross Street.
\newblock Braided tensor categories.
\newblock {\em Advances in Mathematics}, 102(1):20--78, 1993.

\bibitem[Kas98]{Kassel1998}
Christian Kassel.
\newblock Quantum groups.
\newblock In {\em Algebra and operator theory ({T}ashkent, 1997)}, pages
  213--236. Kluwer Acad. Publ., Dordrecht, 1998.

\bibitem[Kel74]{kelly74}
G.~Max Kelly.
\newblock Doctrinal adjunction.
\newblock In G.~Max Kelly, editor, {\em Category Seminar}, pages 257--280,
  Berlin, Heidelberg, 1974. Springer Berlin Heidelberg.

\bibitem[KL01]{Kerler2001}
Thomas Kerler and Volodymyr~V. Lyubashenko.
\newblock {\em Non-semisimple topological quantum field theories for
  3-manifolds with corners}, volume 1765 of {\em Lecture Notes in Mathematics}.
\newblock Springer-Verlag, Berlin, 2001.

\bibitem[Kow20]{Kowalzig2020}
Niels Kowalzig.
\newblock Centres, trace functors, and cyclic cohomology.
\newblock {\em (preprint) http://arxiv.org/pdf/2012.13790v1}, 2020.

\bibitem[KR93]{Kauffman1993}
Louis~H. Kauffman and David~E. Radford.
\newblock A necessary and sufficient condition for a finite-dimensional
  {D}rinfel'd double to be a ribbon {H}opf algebra.
\newblock {\em Journal of Algebra}, 159(1):98--114, 1993.

\bibitem[Lei14]{Leinster2014}
Tom Leinster.
\newblock {\em Basic category theory}, volume 143 of {\em Cambridge Studies in
  Advanced Mathematics}.
\newblock Cambridge University Press, Cambridge, 2014.

\bibitem[Lor21]{loregian2021}
Fosco Loregian.
\newblock {\em ({C}o)end calculus}, volume 468 of {\em London Mathematical
  Society Lecture Note Series}.
\newblock Cambridge University Press, Cambridge, 2021.

\bibitem[McC02]{McCrudden2002}
Paddy McCrudden.
\newblock Opmonoidal monads.
\newblock {\em Theory and Applications of Categories}, 10:No. 19, 469--485,
  2002.

\bibitem[ML63]{MacLane1963}
Saunders Mac~Lane.
\newblock Natural associativity and commutativity.
\newblock {\em Rice University Studies}, 49(4):28--46, 1963.

\bibitem[ML98]{MacLane1998}
Saunders Mac~Lane.
\newblock {\em Categories for the working mathematician}, volume~5 of {\em
  Graduate Texts in Mathematics}.
\newblock Springer-Verlag, New York, second edition, 1998.

\bibitem[Moe02]{Moerdijk2002}
Ieke Moerdijk.
\newblock Monads on tensor categories.
\newblock {\em J. Pure Appl. Algebra}, 168(2-3):189--208, 2002.
\newblock Category theory 1999 (Coimbra).

\bibitem[M{\"{u}}g13]{Mueger2013}
Michael M{\"{u}}ger.
\newblock {\em Quantum physics and linguistics: a compositional, diagrammatic
  discourse}, chapter Chapter 6.
\newblock Oxford University Press, 2013.

\bibitem[MW11]{Mesablishvili2011}
Bachuki Mesablishvili and Robert Wisbauer.
\newblock Bimonads and {H}opf monads on categories.
\newblock {\em Journal of K-Theory. K-Theory and its Applications in Algebra,
  Geometry, Analysis \& Topology}, 7(2):349--388, 2011.

\bibitem[NS07]{Ng2007}
Siu-Hung Ng and Peter Schauenburg.
\newblock Higher {F}robenius-{S}chur indicators for pivotal categories.
\newblock In {\em Hopf algebras and generalizations}, volume 441 of {\em
  Contemp. Math.}, pages 63--90. Amer. Math. Soc., Providence, RI, 2007.

\bibitem[Pr{\"{u}}24]{Pruefer1924}
Heinz Pr{\"{u}}fer.
\newblock Theorie der {A}belschen {G}ruppen.
\newblock {\em Mathematische Zeitschrift}, 20(1):165--187, 1924.

\bibitem[Rie17]{Riehl2017}
Emily Riehl.
\newblock {\em Category theory in context}.
\newblock Courier Dover Publications, 2017.

\bibitem[Sch01]{Schauenburg2001}
Peter Schauenburg.
\newblock Turning monoidal categories into strict ones.
\newblock {\em New York Journal of Mathematics}, 7:257--265, 2001.

\bibitem[Sel11]{Selinger2011}
Peter Selinger.
\newblock A survey of graphical languages for monoidal categories.
\newblock In {\em New structures for physics}, volume 813 of {\em Lecture Notes
  in Phys.}, pages 289--355. Springer, Heidelberg, 2011.

\bibitem[Shi15]{Shimizu2015}
Kenichi Shimizu.
\newblock The pivotal cover and {F}robenius-{S}chur indicators.
\newblock {\em Journal of Algebra}, 428:357--402, 2015.

\bibitem[Shi16]{Shimizu2016}
Kenichi Shimizu.
\newblock Pivotal structures of the drinfeld center of a finite tensor
  category.
\newblock {\em (preprint) http://arxiv.org/pdf/1608.05905v3}, 2016.

\bibitem[Shi17]{Shimizu2017}
Kenichi Shimizu.
\newblock The monoidal center and the character algebra.
\newblock {\em Journal of Pure and Applied Algebra}, 221(9):2338--2371, 2017.

\bibitem[{Shi}19]{Shimizu2019}
Kenichi {Shimizu}.
\newblock Non-degeneracy conditions for braided finite tensor categories.
\newblock {\em Advances in Mathematics}, 355:36, 2019.
\newblock Id/No 106778.

\bibitem[{\v{S}}ko07]{Skoda2007}
Zoran {\v{S}}koda.
\newblock Quantum heaps, cops and heapy categories.
\newblock {\em Mathematical Communications}, 12(1):1--9, 2007.

\bibitem[Str72]{Street1972}
Ross Street.
\newblock The formal theory of monads.
\newblock {\em Journal of Pure and Applied Algebra}, 2(2):149--168, 1972.

\bibitem[Str12]{Street2012}
Ross Street.
\newblock Monoidal categories in, and linking, geometry and algebra.
\newblock {\em Bulletin of the Belgian Mathematical Society. Simon Stevin},
  19(5):769--821, 2012.

\bibitem[Szl12]{Szlachanyi2012}
Korn\'{e}l Szlach\'{a}nyi.
\newblock Skew-monoidal categories and bialgebroids.
\newblock {\em Advances in Mathematics}, 231(3-4):1694--1730, 2012.

\bibitem[TV17]{Turaev2017}
Vladimir Turaev and Alexis Virelizier.
\newblock {\em Monoidal categories and topological field theory}, volume 322 of
  {\em Progress in Mathematics}.
\newblock Birkh\"{a}user/Springer, Cham, 2017.

\end{thebibliography}

\end{document}